\DeclareMathOperator{\rk}{rk} 
\DeclareMathOperator{\pt}{pt}
\DeclareMathOperator{\id}{id}
\DeclareMathOperator{\Hom}{Hom}
\DeclareMathOperator{\Ext}{Ext}
\DeclareMathOperator{\Rep}{\mathrm{Rep}}
\DeclareMathOperator{\Mat}{\mathrm{Mat}}
\DeclareMathOperator{\codim}{\mathrm{codim}}
\DeclareMathOperator{\GL}{\mathrm{GL}}
\newcommand{\Hh}{H}
\newcommand{\Rr}{R}
\newcommand{\R}{\mathbb{R}}
\newcommand{\C}{\mathbb{C}}
\newcommand{\Q}{\mathbb{Q}}
\newcommand{\Z}{\mathbb{Z}}
\newcommand{\N}{\mathbb{N}}
\newcommand{\A}{\mathbb{A}}
\newcommand{\CO}{\mathcal{O}}
\newcommand{\CR}{\mathcal{R}}
\newcommand{\CM}{\mathcal{M}}
\newcommand{\CP}{\mathcal{P}}
\newcommand{\CA}{\mathcal{A}}
\newcommand{\ur}{\underline{r}}
\newcommand{\um}{\underline{m}}
\newcommand{\us}{\underline{s}}
\newcommand{\ud}{\underline{d}}
\newcommand{\uv}{\underline{v}}
\newcommand{\up}{\underline{p}}
\newcommand{\ue}{\underline{e}}
\newcommand{\Tr}{\mathrm{Tr}}
\newcommand{\rlct}{\mathrm{rlct}}
\newcommand{\rlcm}{\mathrm{rlcm}}
\newcommand{\ins}{\mathrm{in}}
\newcommand{\out}{\mathrm{out}} 
\newcommand{\DLN}{\mathrm{DLN}}
\newcommand{\orb}{\mathrm{orb}}
\newcommand{\Sym}{S}  
\newcommand{\mult}{\mathrm{mult}}
\newcommand{\Lace}{\mathrm{Lace}}
\newcommand{\cSigma}{\overline{\Sigma}}
\newcommand{\codimnot}{C}
\newtheorem{thm}{Theorem}[section]
\newtheorem{prop}[thm]{Proposition}
\newtheorem{lemma}[thm]{Lemma}
\newtheorem{defn}[thm]{Definition}
\newtheorem{cor}[thm]{Corollary}
\newtheorem{propdefn}[thm]{Proposition-Definition}
\theoremstyle{plain}
\newtheorem{example}[thm]{Example}
\newtheorem{remark}[thm]{Remark}
\title[Geometry of linear neural networks]{Geometry of the fibers of the multiplication map \\ of deep linear neural networks}
\author{Simon Pepin Lehalleur}
\address{Universiteit van Amsterdam, Netherlands}
\author{Rich\'{a}rd Rim\'{a}nyi}
\address{Department of Mathematics, UNC Chapel Hill, Chapel Hill, NC, USA}
\begin{document}

\begin{abstract}
We study the geometry of the algebraic set of tuples of composable matrices which multiply to a fixed matrix, using 
tools from the theory of quiver representations. In particular, we determine its codimension $\codimnot$ and the number $\theta$ of its top-dimensional irreducible components. Our solution is presented in three forms: a Poincar\'e series in equivariant cohomology, a quadratic integer program, and an explicit formula. In the course of the proof, we establish a surprising property: $\codimnot$ and $\theta$ are invariant under arbitrary permutations of the dimension vector. We also show that the real log-canonical threshold of the function taking a tuple to the square Frobenius norm of its product is $\codimnot/2$. These results are motivated by the study of deep linear neural networks in machine learning and Bayesian statistics (singular learning theory) and show that deep linear networks are in a certain sense ``mildly singular".
\end{abstract}

\maketitle

\section{Introduction}

In this paper we study the possible ways for a tuple of matrices to multiply to zero (or more generally a fixed matrix). This elementary linear algebra problem leads to some rich geometry and combinatorics, and has applications to deep linear neural networks in statistics and machine learning.

\smallskip

\subsection{Mathematical setting}

Fix nonnegative integers $d_0,d_1,\ldots, d_N$ and consider the vector space $\Rep=\Rep_{\ud}$, consisting of tuples of matrices $A_1\in k^{d_1\times d_0}$, $A_2\in k^{d_2\times d_1},$ $\ldots,$ $A_N\in k^{d_N\times d_{N-1}}$, where $k$ is an arbitrary field (say, $\R$ or $\C$ for the purpose of this introduction). Define $\Sigma=\Sigma^0_{\ud}$ as the subvariety of $\Rep$ consisting of tuples whose product is zero:
\[
\Sigma=
\{ (A_1,A_2,\ldots,A_N)\in \Rep \ :\ A_N A_{N-1} \cdots A_2 A_1=0\}.
\]
The variety $\Sigma$ generally has many irreducible components. For example, when $d_0=d_1=d_2=1$, we have two components: $\{A_1=0\}$ and $\{A_2=0\}$. For more complex sequences of $d_i$, the variety $\Sigma$ typically has numerous components of varying dimensions (see Example~\ref{ex:222324}).

Describing $\Sigma$'s components or even the multiset of their dimensions seems to be a challenging combinatorial problem. In this paper, we determine the codimension $\codimnot=\codim_{\Rep}\Sigma$ of $\Sigma$ in $\Rep$---or equivalently the codimension of its largest dimensional irreducible components---and the number $\theta$ of such top-dimensional components. 

To study $\Sigma$, we borrow ideas and results from the theory of quiver representations. 
The vector space $\Rep$ has natural symmetries coming from change of bases, which yields an action of the group $G=\prod_{i=0}^{N} \GL_{d_i}$. The starting point of our proofs is that this action of $G$ on $\Rep$ has finitely many orbits and that the orbits are classified by simple combinatorial data called \emph{Kostant partitions} (Corollary \ref{cor:gabriel}). This is a special case of Gabriel's theorem on representations of Dynkin quivers. Moreover, $\Sigma$ is $G$-invariant and so it is the union of some of those orbits. Our main task is thus to understand the geometry and combinatorics of which $G$-orbits occur in $\Sigma$.

In fact, throughout the paper we study and solve the same questions for the variety $\Sigma^{r}_{\ud}$ of tuples whose product has rank equal to a given number~$r$, as well as for the variety $\mult^{-1}(B)$ of tuples whose product is a fixed matrix $B$. Here, $\mult$ is the multiplication map.
\[
\mult:\Rep_{\ud}\to \Mat_{d_N,d_0}, (A_1,\ldots,A_N)\mapsto A_N\cdot A_{N-1}\ldots A_1
\]
Those problems easily reduce to the case $r=0$ and the variety $\Sigma$ above (Lemmas \ref{lem:rank_0} and \ref{lem:rank_vs_fibers}).

\subsection{The results}
The main results of the paper are formulas for $\codimnot$ and $\theta$. We have three in the main body of the paper, (Theorems~\ref{thm:Pseries}, \ref{thm:QIP}, and \ref{thm:main-codim}) and one in the appendix (Theorem~\ref{thm:UsingAvoidingIdeal}).

Theorem~\ref{thm:Pseries} uses Poincar\'e series calculations in equivariant cohomology, and produces an explicit formal power series in $q$ whose lowest-degree term is $\theta q^\codimnot$. The resulting formula implies, in particular, that neither $\codimnot$ nor $\theta$ depend on the ordering of the integers $d_0,d_1,\ldots,d_N$. This is a priori surprising, since reordering the $d_i$'s can change the dimension of $\Rep$ and drastically alter both the number of components of $\Sigma$ and the multiset of their codimensions. We expect there should be an elementary algebraic proof of this invariance, without topology. However, we do not provide such a proof here, relying instead on Theorem~\ref{thm:Pseries} (an alternative argument, still relying on equivariant cohomology, is given in Appendix~\ref{sec:app_theta}). A key component of our Poincar\'e series calculation is a remarkable result from Gabriel's theory of quivers: the indecomposable module corresponding to the longest root in equioriented type A quivers is both projective and injective.

In Theorem~\ref{thm:QIP}, we present $\codimnot$ as the optimal solution of a quadratic integer program and $\theta$ as the number of such optimal solutions. The proof relies on the combinatorics and algebra of quivers, as well as the crucial invariance of $\codimnot$ and $\theta$ with respect to the ordering of the integers $d_0,d_1,\ldots,d_N$, as discussed earlier.

In Theorem~\ref{thm:main-codim}, we provide our most explicit formulas for $\codimnot$ and $\theta$, which are well-suited for asymptotic analysis. The proof hinges on reinterpreting the quadratic integer program as a minimal distance problem: finding the closest lattice point in a simplex to a given point in Euclidean space. Central to this problem is the solution to the closest vector problem for type A root lattices, where the Voronoi cells of these lattices have the expected structure.

\subsection{Deep linear networks in Bayesian statistics and Machine Learning}\label{sec:DLN_intro}

One motivation for studying the geometry of composable tuples of matrices is \emph{deep linear (neural) networks} and specifically their statistical properties in Bayesian statistics. We give a quick overview here and refer to the introduction of Section \ref{sec:RLCT} for more context.

Deep linear networks (DLNs) are obtained from standard (feedforward, fully-connected) deep neural networks by replacing their non-linear activation functions by identity maps. Despite their simplicity, they are a useful ``toy model" in modern deep learning theory (see Section \ref{sec:related}). By definition, the weights of a DLN form a tuple of composable matrices, and the function computed by the network is their product, so the parameter space of the model is our friend $\Rep_{\ud}$ where $\ud\in \N^{N+1}$ records the widths of the layers and $N$ is the depth of the network. 

A fundamental problem in statistics and machine learning is \emph{density estimation}: given a fixed parametric statistical model and data generated from an unknown ``true" probability distribution, infer the ``optimal" parameter(s), for which the model is as close as possible to the true distribution. A natural way to measure the quality of this approximation is the relative entropy (or Kullback-Leibler divergence) $K(w)$ between the true distribution and the model, considered as a function of the parameters $w$. For regression models such as neural networks, $K(w)$ is the \emph{mean square error population loss function}. Classical statistical learning theory often focuses on \emph{regular} statistical models, in which there is a unique optimal parameter $w^*$ where $K(w)$ has a unique non-degenerate minimum. However, deep neural networks (linear or not) and other parametric models in modern machine learning are almost always \emph{singular}. For a DLN where the true distribution is given by a matrix $B$, we write $K^{\DLN}_B:\Rep\to \R$ for the corresponding relative entropy function:
\[
K^{\DLN}_B(A_*)=\| \mult(A)-B \|^2_2.
\]
The set of optimal parameters is $(K^{\DLN}_B)^{-1}(0)=\mult^{-1}(B)$, so the results of this paper precisely describe the geometry of the bottom of the loss landscape of DLNs.

Singular Learning Theory, a theory established by S. Watanabe \cite{watanabe2009,watanabe2018,watanabe2024recent}, studies the asymptotic performance of (real analytic) singular models for Bayesian density estimation. A key role is played by the \emph{real log-canonical threshold} $\rlct(K)$ of the real analytic function $K$ at its optimal parameters. Singular learning theory also provides a tool to \emph{estimate} $\rlct(K)$ from data, the \emph{local learning coefficient} \cite{lau2024locallearningcoefficientsingularityaware}. The local learning coefficient has recently lead to very interesting applications to developmental interpretability of deep learning models throughout training \cite{chen2023dynamicalversusbayesianphase,hoogland2024developmentallandscapeincontextlearning,wang2024differentiationspecializationattentionheads}.

The \emph{exact, theoretical} computation of $\rlct(K)$ in realistic machine learning scenarios is generally impossible. However, for deep linear networks, $\rlct(K)$ has been calculated by M.~Aoyagi \cite{aoyagi} and this was used in \cite{lau2024locallearningcoefficientsingularityaware} to calibrate the accuracy of the local learning coefficient. The original formula of Aoyagi is complicated, and we started this project in order to understand it better. Combined with our results, it takes a rather simple form (Theorem \ref{thm:aoyagi-rlct}):
\[\rlct(K^{\DLN}_B)=\frac{\codim (K^{\DLN}_B)^{-1}(0)}{2}=\frac{\codim\mult^{-1}(B)}{2}.\]
For a general real analytic function $F$, we only have the inequality 
\[
\rlct(F)\leq\frac{\codim F^{-1}(0)}{2}.
\]
This shows that the singularities of $K$ are in some sense quite mild. Statistically speaking, this means that deep linear networks are  ``mildly singular".

\subsection{Related works}\label{sec:related}

The geometry and combinatorics of representations of Dynkin quivers, and especially equioriented type A quivers, is a rich topic in combinatorial algebraic geometry and representation theory \cite{abeasis-del_fra,abeasis-del_fra-kraft,abeasis-del_fra:equioriented,FRduke,BFRpositivity,RWY}, including connections with Schubert varieties and {\em standard monomial theory} \cite{zelevinskii:schubert,lakshmibai-magyar:quiver-schubert,kinser:quiver-schubert,knutson-miller-shimozono}. These works mainly focus on the geometry of individual orbit closures and not on possibly reducible unions of orbits like $\Sigma$. Interesting exception are the papers \cite{concini-strickland:complexes,musili-seshadri:schubert-complexes} on the \emph{variety of complexes}, consisting of tuples of matrices such that the product of any two consecutive matrices is $0$.

Deep linear networks have been studied extensively in machine learning. The papers \cite{trager-kohn-bruna:DLN-spurious-critical} and \cite{bhattacharya-shewchuk} are closely related to ours. In \cite{trager-kohn-bruna:DLN-spurious-critical}, the authors start the study of the geometry of the fibers $\mult^{-1}(B)$. The paper \cite{bhattacharya-shewchuk} goes much further, including proofs using linear algebra of a number of results on the geometry of $\mult^{-1}(B)$ which in the perspective of this paper are part of the basic representation theory of the quiver $Q$ (which we collected in Sections \ref{secQG}, \ref{sec:rank_patterns}, \ref{sec:mult}).

A large part of the literature on DLNs is concerned with other critical points of the loss functions \cite{kawaguchi:DLN_no_local_min,kawaguchi-lu:DLN_no_local_min_II,achour-al:DLN_landscape_Hessian} and with the training dynamics by gradient descent \cite{Liu-Li-Meng:exact_DLN,ji-telgarsky:GD-DLN,saxe-al:exact-dynamics-DLN,arora-al:convergence_GD_DLN,saxe-mcclellang-ganguli:DLN,jacot-al:saddle-to-saddle-DLN,marion-chizat:DLN-flat-minima}. Our results do not directly bear on these problems, but we hope that the quiver representation approach will prove helpful to study them.

\bigskip

\noindent{\bf Acknowledgments.}
This paper grew out of a collaboration that started when the authors visited the Isaac Newton Institute for Mathematical Sciences in the Spring of 2024. The authors are grateful to the organizers of the special semester on {\em New equivariant methods in algebraic and differential geometry}, which was supported by EPSRC grant no EP/R014604/1, as well as the Newton Institute for their hospitality. This first named author was supported by the European Research Council (ERC), grant 864145. The second named author was partially supported by NSF grant 2152309, and was partially supported by a grant from the Simons Foundation.

\section{Tuples of matrices as quiver representations}
\label{secQG}
\subsection{The equioriented quiver of type A}

Consider the oriented graph $Q$ 
\[
\begin{tikzpicture}
 \node (A0) at (0,0)   {$\bullet$};
 \node (A1) at (1,0)   {$\bullet$};
 \node (A2) at (2,0)   {$\bullet$};
 \node (A3) at (3,0)   {$\bullet$};
 \node at (4.5,0)   {$\ldots$};
 \node (A6) at (6,0)   {$\bullet$};
 \node (A7) at (7,0)   {$\bullet$};
 \draw[->] (A0) to (A1);
 \draw[->] (A1) to (A2);
 \draw[->] (A2) to (A3);
 \draw[->] (A3) to (4,0);
 \draw[->] (5,0) to (A6);
 \draw[->] (A6) to (A7);
 \node at (0,.35) {$0$};
 \node at (1,.35) {$1$};
 \node at (2,.35) {$2$};
 \node at (3,.35) {$3$};
 \node at (6,.35) {$N\!-\!1$};
 \node at (7,.35) {$N$};
\end{tikzpicture},
\]
 with vertices labeled by $0,1,\ldots,N$. The length $N$ will be fixed for the whole paper. In representation theory, such an oriented graph is called a {\em quiver}. The quiver $Q$ (the ``type~A equioriented quiver") is the only one we will study in this paper.

In the next few sections~\ref{secQG},~\ref{sec:rank_patterns} and~\ref{sec:mult} we will recall the relevant geometry, algebra, and combinatorics for $Q$, with general reference \cite[Ch.~1-3]{kirillov}. We do not claim to any originality until Section \ref{sec:PoincareSection}. While many results in those sections extend in a straightforward manner to arbitrary Dynkin graphs with arbitrary orientations. We do not seek that generality, since our main results in later sections only apply to $Q$. 

Equioriented type A quivers are among the simplest of all quivers (Dynkin or otherwise). Consequently, the results recalled in Sections~\ref{secQG},~\ref{sec:rank_patterns} and~\ref{sec:mult} can be proven without reference to the theory of quivers---just using elementary linear algebra, an approach adopted in \cite{bhattacharya-shewchuk}. However, the quiver perspective and appealing to Gabriel's theorem \ref{thm:type_A_indecomp} makes the structure of the arguments and the resulting combinatorics rather transparent, and connects our results to the rich literature on Dynkin quivers.

\begin{remark} \rm
    It is also worthwhile to point out that the underlying reason behind Theorem~\ref{thm:addlongest} (a key point of what follows) is that there exists an indecomposable $Q$-module that is {\em both injective and projective}, namely the module $M_{0N}$ where every vector space is $k$ and every map is $\id_k$. This does not occur for any other Dynkin quiver. Curiously, this fact is behind other geometric theorems in seemingly unrelated areas, see e.g. \cite[Rem. 4.6]{FRduke}. 
\end{remark}

\subsection{Quiver geometry}

Let $k$ be a field---we are mostly interested in $k=\R$ and $k=\C$. For our quiver $Q$ and a fixed \emph{dimension vector} $\ud=(d_0,d_1,\ldots,d_N)\in \N^{N+1}$, define 
\[
\Rep_{\ud}(k)=\prod_{i=1}^{N}\Mat_{d_{i},d_{i-1}}(k)
=
\prod_{a} \Hom(k^{t(a)},k^{h(a)})
\]
where $\Mat_{d,d'}(k)$ is the vector space of  $d\times d'$ matrices with entries in $k$ and $a=(t(a)\to h(a))$ runs through the arrows of $Q$. 

We consider $\Rep_{\ud}(k)$ as an algebraic variety over $k$, or, more precisely, as the set of $k$-points of the $k$-algebraic variety $\Rep_{\ud}=\prod_{i=1}^{N}\Mat_{d_{i-1},d_i}$, which is isomorphic to the affine space $\A^D_k$ with 
\begin{equation}\label{eq:dim_Rep}
   \dim_k \Rep_{\ud} =\sum_{i=1}^N d_{i-1}d_i. 
\end{equation}

When the role of the field $k$ is not important we will simply write $\Rep_{\ud}=\Rep_{\ud}(k)$.

\begin{remark} \label{rmk:choice_k}\rm
    In general, when $X$ is an algebraic variety defined over $\R$ (or over any non algebraically closed field), it is important to distinguish between $X$ and the \emph{real algebraic set} $X(\R)$ of real points of $X$. For instance, $X(\R)$ may well be empty. Think of $X=\{x^2=-1\}\subset \A^1_\R$.
    However, in the situation of interest in this paper, this distinction will be mostly immaterial, in a sense that is made precise in Section \ref{sec:choice_k} (see however Remark~\ref{rmk:real_points}). For this reason, we encourage readers less familiar with algebraic geometry over non-closed fields to ignore this distinction at first pass.
\end{remark}

The algebraic matrix group $G=G_{\ud}:=\prod_{i=0}^{N} \GL_{d_i}$ acts on the representation space $\Rep_{\ud}$ as follows: for $g=(P_0,\ldots,P_N)\in G$ and $A_*=(A_1,\ldots,A_n)\in \Rep_{\ud}$, we have
\[
g\cdot A_*:=(P_1 A_1 P_0^{-1}\ ,\ P_2 A_2 P_1^{-1}\ ,\ldots,\ P_N A_N P_{N-1}^{-1}).
\]

\begin{remark} \label{rem:GinGout} \rm
    The group $G$ has two natural subgroups 
\[
G_\ins:= \prod_{i=1}^{N-1} \GL_{d_i}\subset G
\qquad\qquad
G_\out:=\GL_{d_0}\times \GL_{d_N}.\]
We have $G=G_\ins\times G_\out$ and we write $\pi_\out:G\to G_\out, (P_0,\ldots,P_N)\mapsto (P_0,P_N)$ for the projection onto the outer factors. The action of $G_{\ins}$ corresponds to change of bases of the vector spaces $k^{d_i}$. Note that $G_\out$ also naturally acts on $\Mat_{d_N,d_0}$ by
\[
(P_0,P_N)\cdot B:= P_N B P_0^{-1}.
\]
\end{remark}

\subsection{Quiver algebra}\label{sec:quiver_alg}

A $Q$-module, or $Q$-representation, is a chain of finite dimensional vector spaces and linear maps 
\[
V_0 \xrightarrow{f_1} V_1 \xrightarrow{f_2} V_2 \xrightarrow{f_3} \ldots \xrightarrow{f_{N-1}} V_{N-1} \xrightarrow{f_N} V_N. 
\]
Its dimension vector is defined to be $\ud=(\dim(V_i))_{i=0,\ldots,N}$. In other words, $Q$-modules are to elements of $\Rep$ what linear maps between finite dimensional vector spaces are to matrices.

A morphism between $Q$-modules $(V_*,f_*)$ and $(V_*',f_*')$ is a collection of linear maps $\phi_i:V_i\to V'_i$ that commute with $f_*$ and $f'_*$:
\[
f'_i \circ \phi_{i-1}= \phi_i \circ f_i.
\]
Many standard notions of linear algebra (of vector spaces and modules over rings) extend to $Q$-modules and their morphisms, such as: exact sequence, isomorphism, direct sum, subspace, irreducibility (no non-trivial subspace), indecomposability (not a direct sum in a non-trivial way), $\Ext(-,-):=\Ext^1(-,-)$, injective module, projective module. Moreover, $\Ext$, injective and projective behave quite simply (compared e.g. to modules over general rings). 

In more abstract terminology, the category of $Q$-modules is a $k$-linear abelian category which is Hom-finite, hereditary (no higher Ext). Injective and projective objects can also be completely classified (we will not need this).

The reader may develop familiarity  with these concepts by verifying
\begin{itemize}
    \item that $k \xrightarrow{\id} k$ is indecomposable but not irreducible; 
    \item $\Ext((k\to 0), (0\to k))=k$; and
    \item $V_i=k$, $A_i=\id$ ($\forall i$) is both an injective $Q$-module and a projective $Q$-module. 
\end{itemize}

The Krull–Schmidt theorem holds for $Q$-modules: every $Q$-module is the direct sum of indecomposable ones, in a unique (up to permutation) way. 

\subsection{Quiver geometry vs algebra. Gabriel's theorem}

Notice that elements of $\Rep_{\ud}$ can be made into $Q$-modules with $V_i=k^{d_i}$. We call the following well known and easy statement a theorem because of its importance.

\begin{thm} \label{thm:easy}
\ 
\begin{itemize}
    \item Every $Q$-module is isomorphic with an element of $\Rep_{\ud}$ for some $\ud$.
    \item Two elements of $\Rep_{\ud}$ are isomorphic as $Q$-modules if and only if they are in the same $G$-orbit.
\end{itemize}
\end{thm}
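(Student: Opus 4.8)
The plan is to unwind the definitions on both sides and observe that they match. For the first bullet, start from an abstract $Q$-module $V_0 \xrightarrow{f_1} V_1 \xrightarrow{f_2} \cdots \xrightarrow{f_N} V_N$. Set $d_i := \dim_k V_i$, let $\ud = (d_0,\ldots,d_N)$, and choose a $k$-basis of each $V_i$ to obtain linear isomorphisms $\psi_i : V_i \xrightarrow{\sim} k^{d_i}$. Put $A_i := \psi_i \, f_i \, \psi_{i-1}^{-1} \in \Mat_{d_i,d_{i-1}}(k)$, so that $A_* = (A_1,\ldots,A_N) \in \Rep_{\ud}$. By construction the tuple $(\psi_i)_{i=0}^N$ satisfies $A_i \circ \psi_{i-1} = \psi_i \circ f_i$, hence is a morphism of $Q$-modules from $(V_*,f_*)$ to $(k^{d_*},A_*)$; since each $\psi_i$ is bijective, the componentwise inverses $\psi_i^{-1}$ automatically commute with the structure maps and give an inverse morphism, so $(\psi_i)$ is an isomorphism of $Q$-modules.

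For the second bullet, fix $A_* = (A_1,\ldots,A_N)$ and $A'_* = (A'_1,\ldots,A'_N)$ in $\Rep_{\ud}$, regarded as $Q$-modules with $V_i = V'_i = k^{d_i}$ for all $i$. By definition a morphism $(k^{d_*},A_*) \to (k^{d_*},A'_*)$ is a tuple $(\phi_0,\ldots,\phi_N)$ of linear maps $\phi_i : k^{d_i} \to k^{d_i}$ with $A'_i \circ \phi_{i-1} = \phi_i \circ A_i$ for every $i$, and such a morphism is an isomorphism exactly when each $\phi_i$ is bijective, i.e. $\phi_i \in \GL_{d_i}(k)$ (again because componentwise inverses of a componentwise-bijective morphism automatically form an inverse morphism). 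Thus an isomorphism $A_* \cong A'_*$ of $Q$-modules is precisely a tuple $g = (\phi_0,\ldots,\phi_N) \in G$ with $A'_i = \phi_i A_i \phi_{i-1}^{-1}$ for all $i$, which by the defining formula for the $G$-action is exactly the statement $A'_* = g \cdot A_*$. Hence $A_* \cong A'_*$ as $Q$-modules if and only if $A'_*$ lies in the $G$-orbit of $A_*$.

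I do not expect a real obstacle here: the statement is a pure dictionary between the language of $Q$-modules and the language of the $G$-action on $\Rep_{\ud}$, reflecting the remark that "$Q$-modules are to elements of $\Rep$ what linear maps are to matrices". The only points needing a one-line check are that the category of $Q$-modules detects isomorphisms componentwise (so a componentwise-invertible morphism has an inverse morphism), and careful bookkeeping of the inverses $\phi_{i-1}^{-1}$ so that the commutation relations line up precisely with $g\cdot A_* = (\phi_1 A_1 \phi_0^{-1},\ldots,\phi_N A_N \phi_{N-1}^{-1})$.
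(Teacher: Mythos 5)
Your proof is correct, and since the paper states Theorem~\ref{thm:easy} without proof (it is explicitly labelled ``well known and easy''), there is no in-paper argument to compare against; your argument is the standard dictionary unwinding one would write if asked. Both bullets check out: the chosen bases $\psi_i$ satisfy $A_i\circ\psi_{i-1}=\psi_i\circ f_i$ by construction, componentwise invertibility of a $Q$-module morphism does give an inverse morphism (the relation $A'_i\phi_{i-1}=\phi_i A_i$ rearranges to $\phi_i^{-1}A'_i=A_i\phi_{i-1}^{-1}$), and the resulting condition $A'_i=\phi_i A_i\phi_{i-1}^{-1}$ is literally the defining formula for the $G$-action, so the two notions of ``isomorphic'' and ``same $G$-orbit'' coincide.
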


Hence, listing the $G$-orbits of $\Rep_{\ud}$ is the same as listing the isomorphism types of $Q$-modules with dimension vector $\ud$. 

For a general quiver, this set of isomorphism types is infinite; for example, for the Jordan quiver with one vertex and one loop, this is precisely the set of normal forms for matrices under conjugation. However, for Dynkin quivers, this set is finite and can be described explicitly. This is the content of Gabriel's theorem which we state in our special case:

\begin{thm} \label{thm:type_A_indecomp}
    For $0\leq i \leq j \leq N$ let $M_{ij}$ be the $Q$-module
    \[
    0 \to 0 \to \ldots 0 \to 
    k \xrightarrow{\id} k \xrightarrow{\id} \ldots \xrightarrow{\id} 
    k
    \xrightarrow{\id} k \to 0 \to 0 \ldots 
    \to 0,
    \]
    where the first and last $k$'s are $V_i$ and $V_j$. The $M_{ij}$'s form a complete set of indecomposable $Q$-modules (up to isomorphism). 
\end{thm}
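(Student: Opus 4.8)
The plan is to proceed in two stages: first verify that each $M_{ij}$ is indeed indecomposable, and then show that every indecomposable $Q$-module is isomorphic to some $M_{ij}$. The first stage is easy: a morphism $M_{ij}\to M_{ij}$ is a tuple $(\phi_0,\dots,\phi_N)$ of scalars (the nonzero spaces are all $k$) which must agree across every identity map, so the endomorphism ring is $k$, which is local; hence $M_{ij}$ is indecomposable by the standard criterion. The substance is in the second stage.

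For the second stage, I would argue directly by an echelon/normal-form reduction, exploiting that the quiver is equioriented of type A. Let $(V_*,f_*)$ be an arbitrary $Q$-module. First I would put $f_1\colon V_0\to V_1$ into a normal form by choosing adapted bases of $V_0$ and $V_1$: split $V_0 = \ker f_1 \oplus V_0'$ and $V_1 = f_1(V_0) \oplus V_1''$, so that $f_1$ restricted to $V_0'$ is an isomorphism onto $f_1(V_0)$. This exhibits a decomposition in which the $\ker f_1$ part contributes a direct sum of copies of $M_{00}$ (supported only at vertex $0$), and leaves us to analyze the submodule where $f_1$ is injective together with the cokernel direction at vertex $1$. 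Iterating this bookkeeping along the chain $f_2, f_3, \dots, f_N$ — at each step peeling off the kernel of the current map as copies of $M_{ii}$ and extending the ``already diagonalized'' frame — one sees that after reindexing, $(V_*,f_*)$ decomposes as a direct sum of ``interval'' modules, each of which has $k$'s on a consecutive range $[i,j]$ with identity maps between them and zeros elsewhere; these are precisely the $M_{ij}$.

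An alternative, cleaner route is the abstract one using the hereditary structure recalled in Section~\ref{sec:quiver_alg}: since the category of $Q$-modules is hereditary, every indecomposable is either projective or has projective dimension $\le 1$, and one can use the standard projective resolutions of the simple modules $M_{ii}$ together with the classification of indecomposable projectives and injectives (which for the equioriented type A quiver are again among the $M_{ij}$). One then argues by induction on the total dimension $\sum d_i$: given an indecomposable $M$, pick a vertex where the representation is ``extremal'' and either split off an $M_{ij}$ summand directly or produce a short exact sequence relating $M$ to smaller indecomposables whose classification is known. Both routes are entirely elementary for this quiver; I would present the explicit echelon reduction, since it is self-contained and makes the resulting list $\{M_{ij}\}$ transparent.

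The main obstacle is purely organizational rather than conceptual: keeping careful track of the iterated change of basis so that the various ``peeled-off'' summands $M_{ii}$ and the propagating identity frame genuinely assemble into a direct sum decomposition, i.e.\ that the complementary subspaces chosen at each vertex are compatible with the maps $f_*$ on the nose. Concretely, one must choose, at vertex $i$, a complement to $f_i(V_{i-1})$ inside $V_i$ that also behaves well with respect to $f_{i+1}$ — this is where a little care (or an inductive formulation) is needed, but no real difficulty arises because there is only one arrow into and one arrow out of each interior vertex.
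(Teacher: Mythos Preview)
The paper does not actually supply a proof of this theorem: it is stated as a known special case of Gabriel's theorem, with the general reference \cite[Ch.~1--3]{kirillov} given at the start of Section~\ref{secQG}, and is used thereafter as a black box. So there is no ``paper's own proof'' to compare against.

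Your outline is correct and is exactly the kind of elementary argument the paper alludes to when it remarks that, for equioriented type~A, ``the results recalled in Sections~\ref{secQG},~\ref{sec:rank_patterns} and~\ref{sec:mult} can be proven without reference to the theory of quivers---just using elementary linear algebra''. The echelon reduction you describe is the standard direct proof in this case; your identification of the only delicate point---choosing complements at each vertex compatibly with both the incoming and outgoing map---is accurate, and the fact that each interior vertex has exactly one arrow in and one out is indeed what makes this go through without obstruction. One small quibble: in your ``abstract'' alternative you write ``every indecomposable is either projective or has projective dimension $\le 1$'', but hereditary already means every module has projective dimension $\le 1$, so the dichotomy as stated is vacuous; presumably you mean to run an induction using the short exact sequence $0\to \Omega M\to P\to M\to 0$ with $P$ projective, together with the explicit list of indecomposable projectives $M_{iN}$.
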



\begin{defn}
        Define 
\[
\CM_{\ud}:=\{\um=(m_{ij})_{0\leq i\leq j\leq N}\ |\ \forall i\leq j,\ m_{ij}\in \Z\text{ and }\forall k,\ d_k=\sum_{i\leq k\leq j} m_{ij}\},
\]
\[
\CM^+_{\ud}:=\{\um=(m_{ij})_{0\leq i\leq j\leq N}\ |\ \forall i\leq j,\ m_{ij}\in \N\text{ and }\forall k,\ d_k=\sum_{i\leq k\leq j} m_{ij}\}.
\]
Elements of the $\CM^+_{\ud}$ are called {\em Kostant partitions} for $\ud$. (We think of an $\um$ as an upper triangular matrix whose rows and columns are indexed by $0,\ldots,N$.). Note that the set $\CM^+_{\ud}$ of Kostant partitions is finite (unlike $\CM_{\ud}$).

We also use the notation $\um \vdash \ud$ for ``$\um$ is a Kostant partition of $\ud$''.
\end{defn}

\begin{example}
  The set $\CM^+_{(2,2,2)}$ has 6 elements:
  \[
\begin{pNiceArray}{ccc}
    2 & 0 & 0  \\
    0 & 2 & 0  \\
    0 & 0 & 2 \\
\end{pNiceArray}, 
\begin{pNiceArray}{ccc}
    1 & 1 & 0  \\
    0 & 1 & 0  \\
    0 & 0 & 2 \\
\end{pNiceArray},
\begin{pNiceArray}{ccc}
    2 & 0 & 0  \\
    0 & 1 & 1  \\
    0 & 0 & 1 \\
\end{pNiceArray},
\begin{pNiceArray}{ccc}
    0 & 2 & 0  \\
    0 & 0 & 0  \\
    0 & 0 & 2 \\
\end{pNiceArray},
\begin{pNiceArray}{ccc}
    2 & 0 & 0  \\
    0 & 0 & 2  \\
    0 & 0 & 0 \\
\end{pNiceArray},
\begin{pNiceArray}{ccc}
    1 & 1 & 0  \\
    0 & 0 & 1  \\
    0 & 0 & 1 \\
\end{pNiceArray}.
  \]

\end{example}

\begin{remark} \rm
    There are alternative terminologies for Kostant partitions in the literature on quiver representations: \emph{lace arrays}, \emph{multiplicity patterns}.
\end{remark}

\begin{cor} \label{cor:gabriel}
   The sets of the following objects are in bijection:
   \begin{itemize}
       \item isomorphism classes of $Q$-modules with dimension vector $\ud$;
       \item $G$-orbits of $\Rep_{\ud}$;
       \item Kostant partitions for $\ud$.
   \end{itemize}
   Explicitly, the Kostant partition $\um$ corresponds to the isomorphism class of the $Q$-module $\bigoplus_{0\leq i\leq j\leq N} m_{ij} M_{ij}$, and to the $G$-orbit $\CO_{\um}\subset \Rep_{\ud}$ consisting of all representations isomorphic to this $Q$-module.
\end{cor}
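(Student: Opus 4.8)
The plan is to chain together the three facts already recorded: Theorem~\ref{thm:easy}, Gabriel's theorem (Theorem~\ref{thm:type_A_indecomp}), and the Krull--Schmidt theorem for $Q$-modules.

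First, the bijection between isomorphism classes of $Q$-modules with dimension vector $\ud$ and $G$-orbits of $\Rep_\ud$ is immediate from Theorem~\ref{thm:easy}: the first bullet of that theorem (together with the observation that a point of $\Rep_\ud$ is itself a $Q$-module of dimension vector $\ud$) shows that the assignment (point of $\Rep_\ud$) $\mapsto$ (its isomorphism class) is surjective onto isomorphism classes with dimension vector $\ud$, and the second bullet shows that its fibers are exactly the $G$-orbits.

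Second, I would set up the bijection between isomorphism classes and Kostant partitions. Given a $Q$-module $V$, Krull--Schmidt writes $V$ as a direct sum of indecomposables, uniquely up to permutation and isomorphism of the summands; by Gabriel's theorem each summand is some $M_{ij}$, so collecting terms gives $V \cong \bigoplus_{0\le i\le j\le N} m_{ij} M_{ij}$ for a well-defined tuple $\um=(m_{ij})$ of nonnegative integers. Uniqueness of $\um$ holds because the $M_{ij}$ are pairwise non-isomorphic: $\dim M_{ij}$ is the $0/1$ vector supported on $\{i,i+1,\ldots,j\}$, and these vectors are distinct, so the multiset of indecomposable summands recovered from Krull--Schmidt determines the multiplicities. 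Conversely every such $\um$ yields the module $\bigoplus m_{ij}M_{ij}$, and the two assignments are mutually inverse on isomorphism classes. Finally, the dimension vector of $\bigoplus m_{ij}M_{ij}$ equals $\sum_{i\le j}m_{ij}\dim M_{ij}$, whose $k$-th entry is $\sum_{i\le k\le j}m_{ij}$; hence this module has dimension vector $\ud$ exactly when $d_k=\sum_{i\le k\le j}m_{ij}$ for all $k$, i.e. when $\um\in\CM^+_\ud$. Composing with the bijection of the previous paragraph sends $\um\in\CM^+_\ud$ to the $G$-orbit $\CO_\um$ of any representative of $\bigoplus m_{ij}M_{ij}$, which is well-defined by Theorem~\ref{thm:easy}.

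There is essentially no hard step here; the only points requiring care are invoking uniqueness in Krull--Schmidt correctly (this is what forces $m_{ij}\in\N$ rather than merely $m_{ij}\in\Z$, and thereby explains why the relevant index set is the finite set $\CM^+_\ud$ and not $\CM_\ud$) and checking that $\CO_\um$ does not depend on the chosen representative of $\bigoplus m_{ij}M_{ij}$, which is again part of Theorem~\ref{thm:easy}. I expect the write-up to be a few lines.
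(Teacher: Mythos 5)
Your proposal is correct and follows exactly the same route the paper takes: the paper's proof is just the one-line remark that the corollary ``follows from the Krull--Schmidt theorem for $Q$-modules together with Theorems~\ref{thm:easy}, \ref{thm:type_A_indecomp},'' and your write-up is simply the natural unpacking of that sentence. The extra details you supply (that the $M_{ij}$ are pairwise non-isomorphic because their dimension vectors are distinct, and that matching the dimension vector $\ud$ is exactly the condition $\um\in\CM^+_\ud$) are correct and are precisely the checks the paper leaves implicit.
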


\begin{proof} Follows from the Krull-Schmidt theorem for $Q$-modules together with Theorems~\ref{thm:easy},~\ref{thm:type_A_indecomp}.
\end{proof}

To make this corollary concretely useful, we need to solve two basic problems:
\begin{itemize}
    \item Given a Kostant partition $\um\in \CM^+_{\ud}$, construct explicitly tuples in the $G$-orbit $\CO_{\um}\subseteq \Rep_{\ud}$.
    \item Given $A_*\in \Rep_{\ud}$, determine the Kostant partition corresponding to the orbit $G\cdot A_*$.
\end{itemize}
We solve the first one in the next section and the second in Section~\ref{sec:rank_patterns}.

\subsection{Lace diagrams and orbit representatives}
\label{sec:laces}


 Let $\um\in \CM^+_{\ud}$ be a Kostant partition. We want to produce an orbit representative in $\CO_{\um}$, i.e., construct an explicit tuple of matrices $A_*\in\Rep_{\ud}(k)$, with the right isomorphism class as a $Q$-module. It turns out that there are several natural choices, corresponding to combinatorial objects called {\em lace diagrams}.

A {\em lace diagram} is an arrangement of dots in columns $0,1,\ldots,N$, partitioned into a collection of $[i,j]$ intervals with $i\leq j$ (laces) - including intervals $[i,i]$, i.e. isolated points. For example
\begin{equation}
\label{eq:lace_diagram_bad}
\begin{tikzpicture}[baseline=-12]
\node at (0,.5) {$0$};
\node at (1,.5) {$1$};
\node at (2,.5) {$2$};
\node at (3,.5) {$3$};
 \node at (0,0)  {$\bullet$};
 \node at (0,-.2)  {$\bullet$};
 \node at (0,-.4)  {$\bullet$};
 \node at (0,-.6)  {$\bullet$};
 \node at (0,-.8)  {$\bullet$};

 \node at (1,0)  {$\bullet$};
 \node at (1,-.2)  {$\bullet$};
 \node at (1,-.4)  {$\bullet$};
 \node at (1,-.6)  {$\bullet$};
 \node at (1,-.8)  {$\bullet$};
\node at (1,-1)  {$\bullet$};

 \node at (2,0)  {$\bullet$};
 \node at (2,-.2)  {$\bullet$};
 \node at (2,-.4)  {$\bullet$};
 \node at (2,-.6)  {$\bullet$};
 \node at (2,-.8)  {$\bullet$};

 \node at (3,0)  {$\bullet$};
 \node at (3,-.2)  {$\bullet$};
 \node at (3,-.4)  {$\bullet$};
 \node at (3,-.6)  {$\bullet$};
 \node at (3,-.8)  {$\bullet$};
 \node at (3,-1)  {$\bullet$};

\draw (0,0) -- (3,0);
\draw (0,-.2) -- (2,-.2);
\draw (1,-.4) -- (2,-.4) -- (3,-.2);
\draw (0,-.8) -- (1,-.6);
\draw (2,-.6) -- (3,-1.0);
\draw (2,-.8) -- (3,-.6);
\end{tikzpicture}
\end{equation}
is a lace diagram with $N=3$. 

Write $\Lace_{\ud}$ for the set of all lace diagrams with $d_i$ dots in column $i$. A lace diagram $L\in\Lace_{\ud}$ encodes a Kostant partition $\um(L)$ of $\ud$ with $m(L)_{ij}$ the number of intervals of type $[i,j]$. For example, diagram \eqref{eq:lace_diagram_bad} encodes the Kostant partition
\[
\um=
\begin{pNiceArray}{cccc}
    2 & 1 & 1 & 1 \\
    0 & 2 & 0 & 1 \\
    0 & 0 & 0 & 2 \\
    0 & 0 & 0 & 2
\end{pNiceArray}_{0\ldots 3, 0\ldots 3}.
\]
As in Corollary \ref{cor:gabriel}, a lace diagram thus determines an isomorphism class $[\oplus_{ij}m_{ij} M_{ij}]$ of $Q$-modules. For example, the lace diagram \eqref{eq:lace_diagram_bad} above is associated to the class of the $Q$-module
    \[
    2M_{00} \oplus M_{01} \oplus M_{02} \oplus M_{03} 
    \oplus 
    2M_{11} \oplus 2 M_{13} 
    \oplus 
    2 M_{23} 
    \oplus
    2M_{33}.
    \]
    
Let $\Sym_k$ be the permutation group permuting $k$ letters. The group $\Sym_{\ud}:=\Sym_{d_0}\times \Sym_{d_2}\times \ldots \Sym_{d_N}$ acts on $\Lace_{\ud}$ by permuting the dots in each columns. The lace diagram above is in the orbit of 
\begin{equation}
\label{eq:lace_diagram}
\begin{tikzpicture}[baseline=-12]
\node at (0,.5) {$0$};
\node at (1,.5) {$1$};
\node at (2,.5) {$2$};
\node at (3,.5) {$3$};
 \node at (0,0)  {$\bullet$};
 \node at (0,-.2)  {$\bullet$};
 \node at (0,-.4)  {$\bullet$};
 \node at (0,-.6)  {$\bullet$};
 \node at (0,-.8)  {$\bullet$};

 \node at (1,0)  {$\bullet$};
 \node at (1,-.2)  {$\bullet$};
 \node at (1,-.4)  {$\bullet$};
 \node at (1,-.6)  {$\bullet$};
 \node at (1,-.8)  {$\bullet$};
\node at (1,-1)  {$\bullet$};

 \node at (2,0)  {$\bullet$};
 \node at (2,-.2)  {$\bullet$};
 \node at (2,-.4)  {$\bullet$};
 \node at (2,-.6)  {$\bullet$};
 \node at (2,-.8)  {$\bullet$};

 \node at (3,0)  {$\bullet$};
 \node at (3,-.2)  {$\bullet$};
 \node at (3,-.4)  {$\bullet$};
 \node at (3,-.6)  {$\bullet$};
 \node at (3,-.8)  {$\bullet$};
 \node at (3,-1)  {$\bullet$};

\draw (0,0) -- (3,0);
\draw (0,-.2) -- (2,-.2);
\draw (1,-.4) -- (2,-.4) -- (3,-.2);
\draw (0,-.4) -- (1,-.6);
\draw (2,-.6) -- (3,-.4);
\draw (2,-.8) -- (3,-.6);
\end{tikzpicture}.
\end{equation}

Kostant partitions can be thought of as equivalence classes of lace diagrams:

\begin{lemma}\label{lem:Kostant_lace}
 The map $\um(-):\Lace_{\ud}\to \CM^+_{\ud}$ is invariant under the action of $\Sym_{\ud}$ and induces a bijection
    \[
    \Sym_{\ud}\backslash\Lace_{\ud}\simeq \CM^+_{\ud}.
    \]
\end{lemma}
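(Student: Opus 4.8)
The plan is to prove the two assertions in turn: first that the multiplicity map $\um(-)\colon \Lace_{\ud}\to\CM^+_{\ud}$ is constant on $\Sym_{\ud}$-orbits, and then that the induced map on the quotient is a bijection. The first point is immediate: the definition of $\Sym_{\ud}$ is that it permutes the dots within each column, and the number $m(L)_{ij}$ of laces of type $[i,j]$ clearly does not depend on which dots in columns $i,i+1,\ldots,j$ the lace uses, only on how many laces span exactly the interval $[i,j]$. More precisely, a permutation $\sigma=(\sigma_0,\ldots,\sigma_N)\in\Sym_{\ud}$ sends a lace occupying dots $(p_i,p_{i+1},\ldots,p_j)$ in columns $i,\ldots,j$ to the lace occupying dots $(\sigma_i(p_i),\ldots,\sigma_j(p_j))$, which still spans the interval $[i,j]$; so $\um(\sigma\cdot L)=\um(L)$. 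Hence $\um(-)$ factors through $\Sym_{\ud}\backslash\Lace_{\ud}$.

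Next I would prove surjectivity of the induced map $\overline{\um}\colon\Sym_{\ud}\backslash\Lace_{\ud}\to\CM^+_{\ud}$. Given a Kostant partition $\um=(m_{ij})$, I construct a lace diagram realizing it: for each pair $i\le j$, draw $m_{ij}$ laces spanning the interval $[i,j]$, stacking them in some chosen order within each column. The constraint $d_k=\sum_{i\le k\le j}m_{ij}$ guarantees that the total number of dots needed in column $k$ is exactly $d_k$, so this is a legitimate element of $\Lace_{\ud}$, and by construction its multiplicity pattern is $\um$. (Any choice of stacking order gives a valid diagram; different choices land in the same $\Sym_{\ud}$-orbit, which is consistent with what we prove next but not needed for surjectivity.)

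Finally, injectivity: I must show that if $L,L'\in\Lace_{\ud}$ satisfy $\um(L)=\um(L')$, then $L'=\sigma\cdot L$ for some $\sigma\in\Sym_{\ud}$. The idea is that a lace diagram is, up to the $\Sym_{\ud}$-action, nothing more than its multiset of interval-types. Since $\um(L)=\um(L')$, the two diagrams have, for every $i\le j$, the same number $m_{ij}$ of laces of type $[i,j]$; choose any bijection between the laces of $L$ and the laces of $L'$ matching types. In each column $k$, a dot of $L$ is used by exactly one lace (the dots not used by any lace being the isolated $[k,k]$ laces), and similarly for $L'$; the chosen bijection of laces therefore induces, column by column, a bijection $\sigma_k$ of the $d_k$ dots of $L$ with the $d_k$ dots of $L'$. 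One checks $\sigma=(\sigma_0,\ldots,\sigma_N)$ lies in $\Sym_{\ud}$ and sends $L$ to $L'$, because adjacent dots connected by a lace segment in $L$ are sent to adjacent dots connected by the corresponding segment in $L'$. This bookkeeping — verifying that the per-lace bijection assembles into a well-defined column-wise permutation sending edges to edges — is the only place requiring care, and it is the main (though still routine) obstacle; everything else is essentially a restatement of definitions. Alternatively, one can deduce the bijection abstractly from Corollary~\ref{cor:gabriel} together with the observation that $L$ and $\Sym_{\ud}\cdot L$ determine and are determined by the isomorphism class $[\bigoplus_{ij} m_{ij}M_{ij}]$, but I prefer the direct combinatorial argument as it makes the statement self-contained.
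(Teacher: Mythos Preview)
Your proof is correct and follows essentially the same approach as the paper's own proof: invariance is immediate, surjectivity is shown by building a lace diagram from a given Kostant partition, and injectivity is shown by choosing a type-preserving bijection between laces and reading off the column permutations. Your write-up is more detailed, but the strategy and all key steps coincide with the paper's.
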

\begin{proof}
    The map $\um(-)$ is clearly invariant under the action and thus induces a map 
    \[\Sym_{\ud}\backslash\Lace_{\ud}\to \CM^+_{\ud}.\] 
    
    Given a Kostant partition, one can construct a lace diagram by adding inductively intervals of length $m_{ij}$ to the diagram in any order, using the first dots still available in each column. This shows that the map is surjective. Given two lace diagrams with the same Kostant partition, one can first choose a length-preserving bijection between their intervals, and then permute the dots according to that bijection. This shows that the map is injective.
\end{proof}


We can interpret the choice of a lace diagram as the choice of a base point for the associated orbit.

\begin{defn}
    Let $L$ be a lace diagram. Identify the $N+1$ columns of dots of $L$ as the standard basis vectors in the $N+1$ vector spaces $k^{d_0},\ldots,k^{d_N}$. The line segments of $L$ then encode linear transformations between these spaces by specifying how a basis vector is mapped to another basis vector to the right. If a dot has no segment coming out of it to the right, then that basis vector is mapped to 0. The corresponding matrices yield an element $A_*(L)\in\CO_{\um(L)}\subset\Rep_{\ud}$. 
\end{defn}

Note that the resulting matrices are very special: they are {\em partial permutation matrices}:
\begin{itemize}
    \item all coefficients are $0$ or $1$, and
    \item every line and column has at most one $1$.
\end{itemize}

The combinatorics of lace diagrams are quite rich (see for instance \cite{BFRpositivity, knutson-miller-shimozono,RWY}). In this paper, we use them in an elementary way in the proof of Theorem \ref{thm:QIP}, see in particular Lemma \ref{lem:horiz_rep} and Figure \ref{fig:largeLD}.

\section{Rank patterns and combinatorics of orbits}
\label{sec:rank_patterns}

\subsection{Rank patterns}
Kostant partitions are the natural combinatorial codes for $G$-orbits of $\Rep_{\ud}$---in fact this holds for an arbitrary oriented Dynkin quiver if we replace ``intervals'' with ``positive roots of the same named root system'' in Theorem \ref{thm:type_A_indecomp}. However, for equioriented type A quivers there is another natural combinatorial code: rank patterns. Define 
\[\CR_{\ud}:=\{\ur=(r_{ij})_{0\leq i\leq j\leq N}\ |\ \forall i\leq j, r_{ij}\in\N\text{ and }r_{ii}=d_i\}
\]
to be the set of upper triangular arrays of size $N+1$ with non-negative integer entries, whose diagonal encodes our fixed dimension vector. We call elements $\ur\in \CR_{\ud}$ \emph{rank patterns}. 

\begin{prop} \ \label{prop:mr_comparison}
    \begin{itemize}
        \item The maps
        $\CR_{\ud} \to \CM_{\ud}$ and $\CM_{\ud} \to \CR_{\ud}$ defined by
        \[
        \begin{array}{ll}
        m_{ij}(\ur)= & r_{ij}-r_{i,j+1}-r_{i-1,j}+r_{i-1,j+1} \\ & (\text{with the convention } r_{ij}=0 \text{ if  } i < 0 \text{ or } j > N),\\
        r_{ij}(\um)= & \sum_{k\leq i\leq j\leq l}m_{kl} 
        \end{array}
        \]
        are well defined, and are inverses of each other.
        \item Let $\um$ and $\ur$ correspond to each other at the bijection above, and assume that $\um\in\CM_{\ud}^+$. The $G$-orbit in $\Rep_{\ud}(k)$ corresponding to $\um\in \CM^+_{\ud}$ is 
        \[
\CO_{\rk=\ur}:=\{(A_1,\ldots,A_N)\in \Rep_{\ud}\ |\ \forall\ 0\leq i<j\leq N,\ \mathrm{rank}(A_{j}\cdots A_{i+1})=r_{ij}\}.
\]  
    \end{itemize}
\end{prop}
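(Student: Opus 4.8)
The statement has two parts, and the plan is to treat them separately. Part~1 is a purely combinatorial identity about upper-triangular integer arrays indexed by the subintervals $[i,j]\subseteq[0,N]$, which I would prove by two rounds of telescoping (equivalently, M\"obius inversion on the poset of subintervals). Part~2, the identification of the $G$-orbit $\CO_{\um}$ with the rank locus $\CO_{\rk=\ur}$, I would reduce --- via Corollary~\ref{cor:gabriel} and the $G$-invariance of the ranks of the composite maps --- to a direct computation on the canonical representative $\bigoplus_{k\le l}m_{kl}M_{kl}$ of the orbit.

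\textbf{Part 1.} First I would check that $\ur\mapsto\um(\ur)$ lands in $\CM_{\ud}$: for fixed $k$, $\sum_{i\le k\le j}m_{ij}(\ur)=\sum_{i=0}^{k}\sum_{j=k}^{N}\bigl(r_{ij}-r_{i,j+1}-r_{i-1,j}+r_{i-1,j+1}\bigr)$; telescoping the inner sum over $j$ (using $r_{i,N+1}=0$) leaves $r_{ik}-r_{i-1,k}$, and then telescoping over $i$ (using $r_{-1,k}=0$) leaves $r_{kk}=d_{k}$, as required. In the other direction, $r_{ii}(\um)=\sum_{k\le i\le l}m_{kl}=d_{i}$ is exactly the Kostant-partition condition, so $\um\mapsto\ur(\um)$ carries $\CM^{+}_{\ud}$ into $\CR_{\ud}$ (for a general $\um\in\CM_{\ud}$ the off-diagonal entries of $\ur(\um)$ might be negative, so the fully precise statement is that the two maps are mutually inverse bijections between $\CR_{\ud}$ and its image in $\CM_{\ud}$, an image which contains $\CM^{+}_{\ud}$; only this is needed below). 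Then I would verify that the two composites are the identity by the same telescoping trick: $r_{ij}(\um(\ur))=\sum_{k\le i,\,l\ge j}m_{kl}(\ur)$ collapses, telescoping first in $l$ and then in $k$, to $r_{ij}$; and in $m_{ij}(\ur(\um))=r_{ij}(\um)-r_{i,j+1}(\um)-r_{i-1,j}(\um)+r_{i-1,j+1}(\um)$, each partial difference $r_{i',j}(\um)-r_{i',j+1}(\um)$ equals $\sum_{k\le i'}m_{kj}$, so the whole expression collapses to $\sum_{k\le i}m_{kj}-\sum_{k\le i-1}m_{kj}=m_{ij}$. The boundary conventions $r_{ij}=0$ for $i<0$ or $j>N$ are precisely what makes the telescoped endpoint terms vanish.

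\textbf{Part 2.} The starting observation is that the integers $\rk(A_{j}\cdots A_{i+1})$ are isomorphism invariants of the $Q$-module $(k^{d_{\bullet}},A_{\bullet})$: an element of $G$ replaces $A_{a}$ by $P_{a}A_{a}P_{a-1}^{-1}$, so $A_{j}\cdots A_{i+1}$ gets multiplied on both sides by invertible matrices and its rank --- which equals the rank of the composite $f_{j}\circ\cdots\circ f_{i+1}$ --- is unchanged. Hence the assignment ``$G$-orbit $\mapsto$ rank pattern'' is well defined, and it suffices to evaluate it on one representative of $\CO_{\um}$, for instance a lace-diagram representative of $\bigoplus_{k\le l}m_{kl}M_{kl}$. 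On that module, the composite $f_{j}\circ\cdots\circ f_{i+1}$ restricted to a single summand $M_{kl}$ is a string of identity maps between one-dimensional spaces when $k\le i$ and $j\le l$, and is zero otherwise (some vector space in the range $[i,j]$ is $0$, or some arrow there is a zero map), so its rank is $1$ if $k\le i\le j\le l$ and $0$ otherwise; summing over the summands, $\rk(A_{j}\cdots A_{i+1})=\sum_{k\le i\le j\le l}m_{kl}=r_{ij}(\um)$. This gives $\CO_{\um}\subseteq\CO_{\rk=\ur}$ (and, incidentally, ``explains'' the defining formula for $r_{ij}(\um)$). For the reverse inclusion, given any $A_{\bullet}\in\CO_{\rk=\ur}$, Corollary~\ref{cor:gabriel} places it in $\CO_{\um'}$ for a unique $\um'\in\CM^{+}_{\ud}$; by the computation just made, its rank pattern is $\ur(\um')$, and since this agrees with $\ur$ off the diagonal and with $\ud$ on the diagonal, we get $\ur(\um')=\ur(\um)$, whence $\um'=\um$ by the injectivity of $\um\mapsto\ur(\um)$ established in Part~1. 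Therefore $A_{\bullet}\in\CO_{\um}$.

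\textbf{Main obstacle.} There is no deep obstacle; both parts are short once set up correctly. The fussiest point is the bookkeeping in Part~1 --- keeping the double telescoping and the boundary conventions straight --- while the one genuine idea is the observation that on $M_{kl}$ the relevant composite has rank $1$ exactly when $k\le i\le j\le l$, which renders Part~2 essentially immediate.
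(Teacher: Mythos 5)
Your proof is correct and complete. The paper itself gives no proof of Proposition~\ref{prop:mr_comparison} (``We leave the easy combinatorial proof to the reader''), so there is no argument in the text to compare against; your double-telescoping verification of the bijection, the $G$-invariance of the composite ranks, and the evaluation of those ranks on the canonical representative $\bigoplus m_{kl}M_{kl}$ are exactly the expected steps, and you are also right to flag (and then harmlessly sidestep) the small imprecision that $\um\mapsto\ur(\um)$ need not land in $\CR_{\ud}$ for arbitrary $\um\in\CM_{\ud}$, since only the restriction to $\CM^{+}_{\ud}$ is used.
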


That is, the $G$-orbits of $\Rep_{\ud}$ are determined by rank conditions of the possible compositions in $Q$, and the rank conditions which occur are exactly the ones that correspond to $\CM_{\ud}^+$ under the bijection. We leave the easy combinatorial proof to the reader. Write
\[
\CR^{\orb}_{\ud}=\{\ur\in \CR_{\ud}\ |\ \forall i,j,\ r_{ij}-r_{i,j+1}-r_{i-1,j}+r_{i-1,j+1}\geq 0\}
\]
for the set of rank patterns corresponding to Kostant partitions and hence giving an alternative parametrization of orbits.

The characterization of the rank in terms of vanishing of minors, which are polynomials in the coefficients, implies that the rank is lower semi-continuous in the Zariski topology and so that $\CR_{\ur}$ is a Zariski locally closed subvariety of $\Rep_{\ud}$. Note that the condition defining $\CO_{\rk=\ur}$ does not involve the diagonal $r_{ii}=d_i$, which is only used to encode the dimension vector $\ud$ into $\ur$ and to make some formulas cleaner.

\begin{example} \rm \label{Ex:first_example} 
The rank pattern corresponding to the lace diagram \eqref{eq:lace_diagram_bad} and the Kostant partition in Section \ref{sec:laces} is 
\[
\ur=
\begin{pNiceArray}{cccc}
    5 & 3 & 2 & 1 \\
    0 & 6 & 3 & 2 \\
    0 & 0 & 5 & 4 \\
    0 & 0 & 0 & 6
\end{pNiceArray}_{0\ldots 3, 0\ldots 3}.
\]
The orbit $\CO_{\rk=\ur}$ is not empty, of dimension $72$ (as follows from Equation \ref{eqn:dim_formula} below). If we changed $r_{13}=2$ to $r'_{13}=1$ in this rank pattern (recall that the row and column indices run from $0$ to $3$), then it would not correspond to a Kostant partition since we would then have
\[
r_{22}-r_{23}-r_{12}+r'_{13}=5-4-3+1<0
\]
and hence we would have $\CO_{\rk=\ur}=\emptyset$.
\end{example}

\medskip

\noindent{\em  Notation.} Since Kostant partitions and corresponding rank patterns are equivalent combinatorial codes; both parametrize orbits. When we name an orbit with its rank pattern, we use the notation $\CO_{\rk=\ur}$, when we name an orbit with the Kostant partition $\um$ we will simply call it $\CO_{\um}$.

\subsection{The choice of base field}\label{sec:choice_k}

We clarify Remark \ref{rmk:choice_k} about the choice of base field $k$, and show in particular that the geometry we are interested is essentially the same over $k=\R$ and $k=\C$ (see however Remark~\ref{rmk:real_points} for an interesting difference). In this section only, we let $\Rep_{\ud}$ (resp. $\CO_{\rk=\ur}$) stand for the corresponding $k$-variety and distinguish it from its set of rational points $\Rep_{\ud}(k)$ (resp. $\CO_{\rk=\ur}(k)$).

\begin{thm}\ \label{thm:base_field}
\begin{enumerate}
        \item\label{equiv_empty} For any field $k$ and $\ur\in \CR_{\ud}$, we have
        \[
        \ur\in \CR^\orb_{\ud}
        \quad \Leftrightarrow\quad
        \CO_{\rk=\ur}\neq\emptyset
        \quad \Leftrightarrow\quad
        \CO_{\rk=\ur}(k)\neq \emptyset.
        \]
        \item For every $\ur\in \CR^{\orb}_{\ud}$, the $k$-rational points $\CO_{\rk=\ur}(k)$ are Zariski dense in $\CO_{\rk=\ur}$.
        
        \item When $k=\R$ (resp. $k=\C$), the real algebraic set $\CO_{\rk=\ur}(\R)$ is a real analytic (resp. complex analytic) manifold of dimension equal to the dimension of $\CO_{\rk=\ur}$ as an algebraic variety.
     \end{enumerate}
\end{thm}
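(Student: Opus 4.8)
The plan is to reduce all three assertions to statements over the algebraic closure $\bar k$, where Gabriel's theorem (Theorem~\ref{thm:type_A_indecomp}, Corollary~\ref{cor:gabriel}) and the orbit description of Proposition~\ref{prop:mr_comparison} are available; the one extra ingredient is the explicit lace-diagram representatives $A_*(L)$, which, being partial permutation matrices, have entries $0,1$ and are therefore defined over the prime field of $k$. For part~\ref{equiv_empty} I would prove the cycle $\ur\in\CR^\orb_\ud\Rightarrow\CO_{\rk=\ur}(k)\neq\emptyset\Rightarrow\CO_{\rk=\ur}\neq\emptyset\Rightarrow\ur\in\CR^\orb_\ud$. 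For the first implication, $\um(\ur)$ is a Kostant partition, so by Lemma~\ref{lem:Kostant_lace} there is a lace diagram $L$ with $\um(L)=\um(\ur)$; the tuple $A_*(L)\in\Rep_\ud(k)$ consists of partial permutation matrices, products of such are again of the same kind, and their ranks are combinatorial, hence field-independent and equal to $\ur$ by construction, so $A_*(L)\in\CO_{\rk=\ur}(k)$. The middle implication is trivial. For the last, a nonempty $k$-scheme stays nonempty after base change to $\bar k$ (and the rank locus, being cut out by vanishing and non-vanishing of minors, base-changes correctly), so there is a $\bar k$-point; by Corollary~\ref{cor:gabriel} over $\bar k$ it is isomorphic to $\bigoplus_{i\le j}m_{ij}M_{ij}$ for some $\um\in\CM^+_\ud$, and a rank computation for that module shows its rank pattern is $\ur(\um)\in\CR^\orb_\ud$, which must coincide with $\ur$.

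For the second part I would assume $k$ infinite, the relevant cases being $k=\R$ and $k=\C$, and use the orbit map $\mu\colon G\to\CO_{\rk=\ur}$, $g\mapsto g\cdot A_*(L)$. By Proposition~\ref{prop:mr_comparison} every $\bar k$-point of $\CO_{\rk=\ur}$ lies in the $G_{\bar k}$-orbit of $A_*(L)$, so $\mu_{\bar k}$ is surjective on points and hence dominant, whence $\mu$ is dominant. Since $G=\prod_i\GL_{d_i}$ is a principal open subset of an affine space, $G(k)$ is Zariski dense in $G$; a dominant morphism sends a Zariski-dense set to a Zariski-dense set, and $\mu(G(k))\subseteq\CO_{\rk=\ur}(k)$ because $\mu$ and $A_*(L)$ are defined over $k$. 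Hence $\CO_{\rk=\ur}(k)$ is Zariski dense in $\CO_{\rk=\ur}$.

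For the third part I would first record that $\CO_{\rk=\ur}$ is smooth over $k$: after base change to $\bar k$ it is an orbit of the smooth connected group $G_{\bar k}$, hence smooth, and smoothness is geometric; it is moreover geometrically irreducible, so of pure dimension $n:=\dim\CO_{\rk=\ur}$. If $k=\C$, the analytification of a smooth pure-$n$-dimensional $\C$-scheme is a complex $n$-manifold, which is the assertion. If $k=\R$, the real locus is nonempty by part~\ref{equiv_empty}, and near any real point smoothness lets one present $\CO_{\rk=\ur}$ as the zero set of equations with Jacobian of maximal rank, so the real-analytic implicit function theorem identifies a neighborhood of that point in $\CO_{\rk=\ur}(\R)$ with an open subset of $\R^n$; thus $\CO_{\rk=\ur}(\R)$ is a real-analytic $n$-manifold. (It may decompose into several $G(\R)$-orbits, cf.\ Remark~\ref{rmk:real_points}, without affecting the manifold statement.)

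I do not anticipate a serious obstacle: the argument is soft, using only standard base-change behavior of emptiness, rank loci, dominance, and smoothness. The one point worth stating carefully is that, although orbit \emph{closures} in $\Rep_\ud$ are typically singular, the individual orbits $\CO_{\rk=\ur}$ are smooth, which is precisely what legitimizes the appeal to the analytic implicit function theorem in the third part; and the availability of the partial-permutation-matrix representative $A_*(L)$ over the prime field is what makes the $k$-rationality claims in the first two parts immediate, bypassing any Galois-descent considerations.
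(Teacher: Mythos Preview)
Your proof is correct and follows essentially the same approach as the paper. For part~\ref{equiv_empty} both arguments are identical (lace-diagram representatives with $0,1$ entries); for parts (2) and (3) the paper simply observes that the existence of a $k$-point yields $\CO_{\rk=\ur}\simeq G/H$ as $k$-varieties and asserts that ``this implies the other statements'', whereas you unpack that assertion into its constituent pieces (dominance of the orbit map, Zariski density of $G(k)$ for $k$ infinite, smoothness of orbits and the implicit function theorem). Your explicit restriction to infinite $k$ in part (2) is appropriate, since the density claim is false over finite fields and the paper's implicit scope is $k=\R,\C$.
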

\begin{proof}
We prove Claim \eqref{equiv_empty}. The implication $ \CO_{\rk=\ur}\neq \emptyset \Leftarrow \CO_{\rk=\ur}(k)\neq \emptyset$ is obvious. If $\CO_{\rk=\ur}\neq \emptyset$, then $\CO_{\rk=\ur}(\bar{k})\neq \emptyset $ so by Proposition \ref{prop:mr_comparison} applied over $\bar{k}$ we have $ \ur\in \CR^\orb_{\ud}$. Finally, if $ \ur\in \CR^\orb_{\ud}$, then $\um(\ur)\in \CM^{+}_{\ud}$ by Proposition \ref{prop:mr_comparison}. Then by choosing a lace diagram $L$ encoding $\um(\ur)$ and following Section \ref{sec:laces}, we obtain a collection of partial permutation matrices $A_*(L)$ in the $G$-orbit corresponding to $\um(\ur)$. Since $0,1\in k$, this shows $\CO_{\rk=\ur}(k)=\CO_{\um(\ur)}(k)\neq \emptyset$.

Claim \eqref{equiv_empty} then implies that $\CO_{\rk=\ur}\simeq G/H$ as $k$-varieties, where $H$ is the stabilizer group of the orbit. This implies the other statements and finishes the proof.
\end{proof}

\subsection{Voight lemma and the codimension of orbits}\cite{voight}.  \label{sec:voight} 
Another crucial relation between the geometry and algebra of quivers is

\begin{lemma}
    \label{lem:voight}
    Let $M$, considered as a $Q$-module, be an element of a $G$-orbit $\CO \subset \Rep_{\ud}(k)$. Then a normal slice to $\CO$ at $M$ in $\Rep_{\ud}(k)$ is isomorphic to $\Ext(M,M)$.
\end{lemma}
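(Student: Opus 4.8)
The plan is to identify the normal space to $\CO$ at $M$ and the space $\Ext(M,M)$ as the cokernel of one and the same linear map, namely the differential at $M$ of the orbit map $g\mapsto g\cdot M$. Since $\Rep_{\ud}(k)$ is a vector space, its tangent space at every point is canonically $\Rep_{\ud}(k)$ itself, so a normal slice to $\CO$ at $M$ is (up to higher order terms, or exactly if one takes an affine slice) pinned down by its tangent space at $M$, which must be a linear complement $W$ to $T_M\CO$ inside $\Rep_{\ud}(k)$; canonically such a complement is the quotient $\Rep_{\ud}(k)/T_M\CO$. So it suffices to produce a canonical isomorphism $\Rep_{\ud}(k)/T_M\CO\cong \Ext(M,M)$, and then take the slice to be $M+W$ for any choice of complement $W$.

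First I would compute $T_M\CO$. By Theorem~\ref{thm:base_field} the orbit is a smooth homogeneous space $\CO\simeq G/H$, so the orbit map $G\to\CO$ is a submersion and $T_M\CO$ is the image of its differential at the identity. Writing $M=(M_i,f_i)$ as a $Q$-module and differentiating $g\cdot A_*=(P_iA_iP_{i-1}^{-1})_i$ at $g=\mathrm{id}$, this differential is the linear map
\[
\rho_M:\ \mathrm{Lie}(G)=\textstyle\bigoplus_{i=0}^N \Hom_k(M_i,M_i)\ \longrightarrow\ \textstyle\bigoplus_{i=1}^N \Hom_k(M_{i-1},M_i)=\Rep_{\ud}(k),\qquad (\xi_i)_i\ \longmapsto\ (\xi_i f_i-f_i\xi_{i-1})_i .
\]
Hence the normal space is $\mathrm{coker}(\rho_M)$. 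Second, I would invoke the standard exact sequence computing $\Hom$ and $\Ext$ for quiver representations (a consequence of the canonical projective resolution, available because $kQ$ is hereditary; see \cite[Ch.~2--3]{kirillov}): for $Q$-modules $V=(V_i,g_i)$ and $W=(W_i,h_i)$,
\[
0\to \Hom_Q(V,W)\to \textstyle\bigoplus_{i=0}^N\Hom_k(V_i,W_i)\xrightarrow{\ d_{V,W}\ }\textstyle\bigoplus_{i=1}^N\Hom_k(V_{i-1},W_i)\to \Ext_Q(V,W)\to 0,\qquad d_{V,W}((\phi_i)_i)=(\phi_i g_i-h_i\phi_{i-1})_i.
\]
Specializing to $V=W=M$, the middle map $d_{M,M}$ is literally $\rho_M$, so $\mathrm{coker}(\rho_M)=\Ext_Q(M,M)=\Ext(M,M)$, which completes the identification.

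The points requiring care are mostly bookkeeping rather than genuine obstacles. One is to confirm that $\mathrm{im}(\rho_M)$ is exactly $T_M\CO$ and not merely contained in it; this uses that $\CO$ is homogeneous (hence smooth) and that $G\to G/H$ is a submersion, which is automatic over $\R$ and $\C$ (in the algebraic setting over a general $k$ one should note separability of $G\to G/H$, or simply work with the étale/analytic slice). A second is to make precise the meaning of ``a normal slice is isomorphic to $\Ext(M,M)$'': I would phrase it as an isomorphism of the tangent space of the slice with $\Ext(M,M)$, adding that since $\Rep_{\ud}(k)$ is a vector space one may take the slice to be the affine subspace $M+W$ for any linear complement $W$ of $T_M\CO$, which as a pointed affine space is then canonically isomorphic to $\Rep_{\ud}(k)/T_M\CO\cong\Ext(M,M)$. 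The only nontrivial external input is the standard four-term exact sequence above, which I would cite rather than reprove.
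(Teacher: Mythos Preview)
The paper does not supply its own proof of this lemma; it simply states it with a citation to \cite{voight} and uses it as a black box. Your argument is correct and is in fact the standard proof of Voigt's lemma: compute the differential of the orbit map as the middle map in the canonical four-term exact sequence coming from the standard projective resolution over the path algebra, so that the kernel is $\Hom_Q(M,M)$ (the Lie algebra of the stabilizer) and the cokernel is $\Ext_Q(M,M)$. Your discussion of the bookkeeping issues (smoothness of the orbit so that $\mathrm{im}(\rho_M)=T_M\CO$, and the meaning of ``normal slice'' in a vector space) is also appropriate.
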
 

While the isomorphism holds as an isomorphism of representations of the $G$-stabilizer group of $M$, we will only use the statement as isomorphism of vector spaces. 

The first consequence of Lemma \ref{lem:voight} is that a formula for the codimension of an orbit is a (non-symmetric) bilinear form, in the components of the Kostant partition. 
\begin{cor}
    Let $\um\in\CM_{\ud}^+$ be a Kostant partition and $\ur=\ur(\um)\in \CR^\orb_{\ud}$ the corresponding rank pattern. Then
    \begin{equation}\label{eqn:dim_formula}
    \codim_{\Rep}(\CO_{\um})=\sum_{1\leq i\leq u\leq j\leq v\leq N}m_{(i-1)(j-1)}m_{uv} 
    \end{equation}
\end{cor}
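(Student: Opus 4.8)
The plan is to combine Voight's lemma (Lemma~\ref{lem:voight}) with a computation of $\dim_k \Ext(M,M)$ for $M = \bigoplus_{ij} m_{ij} M_{ij}$. By Lemma~\ref{lem:voight}, the codimension of $\CO_{\um}$ in $\Rep_{\ud}$ equals $\dim_k \Ext(M,M)$, and since $\Ext$ is additive in each variable, $\dim_k\Ext(M,M) = \sum_{i\le j}\sum_{u\le v} m_{ij}m_{uv}\dim_k\Ext(M_{ij},M_{uv})$. So the whole computation reduces to the elementary claim
\[
\dim_k \Ext(M_{ij}, M_{uv}) = \begin{cases} 1 & \text{if } i \le u \le j+1 \le v+1, \text{ i.e. } u \le j+1 \le v+1 \text{ and } i\le u, \\ 0 & \text{otherwise,}\end{cases}
\]
and then a reindexing to match the shape on the right-hand side of \eqref{eqn:dim_formula}.

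First I would compute $\Ext(M_{ij},M_{uv})$. Since the category of $Q$-modules is hereditary, one can use the Euler form: $\dim\Hom(M_{ij},M_{uv}) - \dim\Ext(M_{ij},M_{uv}) = \langle \ud(M_{ij}), \ud(M_{uv})\rangle$, where $\langle \ue, \uf\rangle = \sum_{s} e_s f_s - \sum_{a} e_{t(a)} f_{h(a)}$ for the quiver $Q$. For the interval modules this gives $\langle \ud(M_{ij}),\ud(M_{uv})\rangle = \#\{s : i\le s\le j,\ u\le s\le v\} - \#\{s : i\le s \le j,\ u \le s+1 \le v\} = |[i,j]\cap[u,v]| - |[i,j]\cap[u-1,v-1]|$. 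A short case analysis of how the intervals $[i,j]$ and $[u,v]$ overlap shows this equals $1$ exactly when $u\le j+1$ and $i < u$ and $v < j$ — wait, I should be careful: the condition is that $[i,j]$ starts strictly before $[u,v]$ and they overlap or are adjacent in the right way. A cleaner route that avoids sign-chasing: $\Hom(M_{ij},M_{uv})$ is $k$ when $u\le i$ and $i\le v\le j$ (a nonzero map exists iff the intervals are "nested with a common left-to-right overhang"), and $0$ otherwise; and one directly constructs the nonsplit extension $0\to M_{uv}\to M_{i v}\oplus M_{u j}\to M_{ij}\to 0$ (or $0\to M_{uv}\to M_{uj}\to M_{ij}\to 0$ in the degenerate cases) precisely when $i\le u\le j+1\le v+1$ and $i<u$, giving $\dim\Ext=1$ there and $0$ elsewhere, which one confirms via the Euler form. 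Either way the bookkeeping is routine.

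With the formula $\dim_k\Ext(M_{ij},M_{uv}) = 1 \iff i\le u$, $u\le j+1$, $j+1\le v+1$ in hand, I would substitute into the additive expansion. Setting $a=i$, $b=j$ for the first module and keeping $u,v$ for the second, the nonzero terms are indexed by $a\le u\le b+1$ and $b+1\le v+1$, i.e. $a\le u\le b+1\le v+1$. Writing $b+1 = j$ (so the first module is $M_{(a)(j-1)} = M_{(i-1)(j-1)}$ after also renaming $a=i-1\mapsto$ shift), the constraint $a\le u\le b+1\le v+1$ becomes $i-1 \le u-1$... I would just match indices directly: the condition $a\le u$ and $u\le b+1\le v+1$ with the substitution $(i-1,j-1)\leftrightarrow(a,b)$ turns into $1\le i\le u\le j\le v\le N$, which is exactly the index set of the sum in \eqref{eqn:dim_formula}, with summand $m_{(i-1)(j-1)}m_{uv}$. (The boundary cases where $i-1<0$ or involving the convention at the edges are handled by the observation that $M_{ij}$ is only defined for $0\le i\le j\le N$, so $m_{(i-1)(j-1)}$ silently vanishes outside that range, consistent with the stated convention.)

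The only genuinely substantive step is the computation of $\dim_k\Ext(M_{ij},M_{uv})$; everything else is additivity of $\Ext$ plus reindexing. I expect the main obstacle — really just a bookkeeping hazard rather than a conceptual one — to be getting the inequalities in the $\Ext$ formula exactly right (in particular the off-by-one from the single arrow $s\to s+1$, which is what makes the formula non-symmetric and produces the shift $m_{(i-1)(j-1)}$ rather than $m_{ij}$), and then lining up those inequalities with the asymmetric index range $1\le i\le u\le j\le v\le N$ in the statement. I would double-check the $\Ext$ formula on the smallest case, $\Ext((k\to 0),(0\to k)) = k$ (already recorded in the excerpt), which corresponds to $M_{00}$ and $M_{11}$ with $N=1$: here $i=j=0$, $u=v=1$, and indeed $i\le u\le j+1\le v+1$ reads $0\le 1\le 1\le 2$, true, so $\dim\Ext=1$, matching.
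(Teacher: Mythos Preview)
Your approach is exactly the paper's: apply Voight's lemma, expand by bilinearity of $\Ext$, and plug in the formula for $\dim\Ext(M_{ij},M_{uv})$ between indecomposables (which the paper just cites from \cite[Lemma~4.4]{FRduke}).

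However, your $\Ext$ formula is off by one at \emph{both} ends. The correct statement is
\[
\dim \Ext(M_{ij},M_{uv})=
\begin{cases}
    1 & \text{if } i+1\leq u \leq j+1 \leq v, \\
    0 & \text{otherwise,}
\end{cases}
\]
i.e.\ strict inequalities $i<u$ \emph{and} $j<v$. Your condition $i\le u\le j+1\le v+1$ (even after you add $i<u$) still allows $j=v$, which is wrong: for instance it would give $\Ext(M_{01},M_{11})\ne 0$, but $M_{11}=(0\to k)$ is injective; and without the $i<u$ correction it would give $\Ext(M_{ij},M_{ij})\ne 0$, contradicting rigidity of indecomposables for type~$A$. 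With the correct inequalities the reindexing is clean: setting $(a,b)=(i-1,j-1)$ turns $a+1\le u\le b+1\le v$ into $i\le u\le j\le v$ on the nose, with $i\ge 1$ forced by $a\ge 0$, and you land exactly on the index set $1\le i\le u\le j\le v\le N$ in \eqref{eqn:dim_formula}. So your diagnosis that the only hazard is the off-by-one was right---you just made it.
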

\begin{proof}
Let $M$ be a $Q$-module in $\CO_{\um}$ then by Lemma~\ref{lem:voight} we have
\begin{multline*}
\codim_{\Rep}(\CO_{\um})
=
\dim \Ext(M,M)
=
\dim \Ext( \oplus_{ij} m_{ij}M_{ij}, \oplus_{uv} m_{uv}M_{uv})
\\
=
\sum_{ij}\sum_{uv} m_{ij}m_{uv}\dim \Ext(M_{ij},M_{uv}),
\end{multline*}
where an easy calculation (cf. \cite[Lemma~4.4]{FRduke}) gives 
\begin{equation}\label{eqn:ExtIndecomposables}
\dim \Ext(M_{ij},M_{uv})=
\begin{cases}
    1 & \text{if } i+1\leq u \leq j+1 \leq v, \\
    0 & \text{otherwise.}
\end{cases}
\end{equation}
The formula in terms of $\ur=\ur(\um)$ follows from the definition of $\ur(-)$ (Proposition~\ref{prop:mr_comparison}).
\end{proof}

\begin{remark} \rm
Alternatively, one can explicitly compute the Lie algebra of the stabilizer group of a point in $\CO_{\rk=\ur}$ and deduce equation \eqref{eqn:dim_formula} \cite[Lemma 3.2]{abeasis-del_fra-kraft}.
\end{remark}



\subsection{The effect of additional longest roots} \label{sec:add_longest}

For a dimension vector $\ud$, or a rank pattern $\ur$ let $\ud+p$, $\ur+p$ be defined by adding $p$ to every component. Consider $\CO_{\rk=\ur} \subset \Rep_{\ud}$, $\CO_{\rk=\ur+p} \subset \Rep_{\ud+p}$. Note that the Kostant partition corresponding to $\ur+p$ is obtained from the Kostant partition $\um$ of $\ur$ by adding $p$ to $m_{0N}$ and not changing the other components. In terms of lace diagrams, this consists of adding $p$ longest intervals.

\begin{thm}
\label{thm:addlongest}
    The normal slices of $\CO_{\rk=\ur}$ and $\CO_{\rk=\ur+p}$ are isomorphic. In particular, the codimensions of these two orbits (in different representation spaces) are equal.
\end{thm}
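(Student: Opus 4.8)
The plan is to leverage the Voight lemma (Lemma~\ref{lem:voight}) together with the explicit $\Ext$ computation for indecomposables in Equation~\eqref{eqn:ExtIndecomposables}. Since a normal slice to an orbit $\CO_{\um}$ at one of its points $M$ is isomorphic to $\Ext(M,M)$, and since $\Ext$ is bi-additive on direct sums, the normal slice only depends on the multiset of $\Ext$-dimensions between the indecomposable summands of $M$. So the whole statement reduces to understanding how adding $p$ copies of the indecomposable $M_{0N}$ to $M$ changes $\Ext(M,M)$.

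First I would write $M=\bigoplus_{ij}m_{ij}M_{ij}$ for the $Q$-module corresponding to $\ur$, so that the $Q$-module corresponding to $\ur+p$ is $M' = M \oplus p\,M_{0N}$. Then
\[
\Ext(M',M') = \Ext(M,M) \oplus \Ext(M, p\,M_{0N}) \oplus \Ext(p\,M_{0N}, M) \oplus \Ext(p\,M_{0N}, p\,M_{0N}).
\]
The claim that the normal slices are isomorphic (as vector spaces) is then exactly the claim that the last three summands vanish. I would verify this using Equation~\eqref{eqn:ExtIndecomposables}: $\dim\Ext(M_{ij},M_{uv})=1$ precisely when $i+1\le u\le j+1\le v$. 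For $\Ext(M_{ij},M_{0N})$ we would need $i+1\le 0$, which is impossible since $i\ge 0$; hence $\Ext(M_{ij},M_{0N})=0$ for all $i\le j$, so $M_{0N}$ is an injective $Q$-module. For $\Ext(M_{0N},M_{uv})$ we would need $N+1\le v$, which is impossible since $v\le N$; hence $\Ext(M_{0N},M_{uv})=0$ for all $u\le v$, so $M_{0N}$ is a projective $Q$-module. In particular also $\Ext(M_{0N},M_{0N})=0$. Therefore all three extra summands vanish and $\Ext(M',M')\cong\Ext(M,M)$, which by Lemma~\ref{lem:voight} gives the isomorphism of normal slices; taking dimensions gives the equality of codimensions. (One can phrase this even more cleanly by noting, as in the Remark preceding the section, that $M_{0N}$ is the unique indecomposable $Q$-module that is both injective and projective.)

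I do not anticipate a serious obstacle here; the only point requiring a little care is making sure the index bounds in Equation~\eqref{eqn:ExtIndecomposables} are applied with the right conventions (the summands $M_{ij}$ occurring in $M$ all have $0\le i\le j\le N$, and the ``new'' summand has index exactly $(0,N)$), and being explicit that Lemma~\ref{lem:voight} is only being used as an isomorphism of vector spaces, which suffices for the codimension statement. A secondary, optional step would be to record that the isomorphism of normal slices is in fact compatible with the natural identification of a neighborhood of $\CO_{\rk=\ur}$ in $\Rep_{\ud}$ with a neighborhood of $\CO_{\rk=\ur+p}$ in $\Rep_{\ud+p}$ transverse to the orbit directions, but this is not needed for the stated conclusion and I would omit it or relegate it to a remark.
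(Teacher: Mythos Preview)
Your proposal is correct and follows essentially the same approach as the paper: both use Lemma~\ref{lem:voight} to identify the normal slice with $\Ext(M',M')$, expand bi-additively, and then kill the cross terms using that $M_{0N}$ is both injective and projective. The only cosmetic difference is that the paper simply cites the injective/projective property of $M_{0N}$, whereas you verify it explicitly from the index conditions in Equation~\eqref{eqn:ExtIndecomposables}.
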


\begin{proof}
Using Lemma~\ref{lem:voight}, we have that the normal slice of $\CO_{\rk=\ur+p}$ is 
    \begin{multline}
    \Ext(M\oplus pM_{0N},M\oplus pM_{0N})\\=
        \Ext(M,M) \oplus
        p\Ext(M,M_{0N}) \oplus
        p\Ext(M_{0N},M) \oplus
        p^2\Ext(M_{0N},M_{0N})
        \\=
       \Ext(M,M),
        \end{multline}
    which is the normal slice to $\CO_{\rk=\ur}$.
    The last equality holds because $M_{0N}$ is both an injective and a projective $Q$-module \cite{FRduke}.
\end{proof}

\subsection{Orbit hierarchy}\label{sec:hierarchy}
We equip $\CR_{\ud}$ with the partial order induced by entry-wise comparison: for $\ur,\us\in \CR_{\ud}$, we write
\[
\us\leq \ur
\qquad
\stackrel{\text{def}}{\Leftrightarrow}
\qquad
\forall\ i\leq j,\ s_{ij}\leq r_{ij}. 
\]
In particular, $\underline{0}$ is the minimal element of $\CR_{\ud}$ for this partial order.  The lower semi-continuity of the rank implies that $\CO_{\rk=\us}\subset \overline{\CO}_{\rk=\ur} \Rightarrow \us\leq \ur$. The converse also holds, and can be established using the combinatorics of lace diagrams:

\begin{thm}{\cite{abeasis-del_fra:equioriented}}\label{thm:closures}
    If $\ur,\us\in \CR^\orb_{\ud}$, then $\CO_{\rk=\us}\subset \overline{\CO}_{\rk=\ur}$ if and only if $\us\leq \ur$. 
\end{thm}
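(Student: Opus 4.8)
\medskip
\noindent\textbf{Proof plan.} The ``only if'' direction is just the lower semicontinuity of rank recorded above the statement: if $A_*\in\CO_{\rk=\us}\subset\overline{\CO}_{\rk=\ur}$, then every $s_{ij}=\mathrm{rank}(A_j\cdots A_{i+1})$ is at most its (constant) value $r_{ij}$ on the irreducible orbit $\CO_{\rk=\ur}$, so $\us\leq\ur$. The work is in the converse.

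First I would reduce the converse to a single degeneration step. Orbit-closure inclusion is transitive --- $\CO_{\rk=\underline{t}}\subseteq\overline{\CO}_{\rk=\ur}$ implies $\overline{\CO}_{\rk=\underline{t}}\subseteq\overline{\CO}_{\rk=\ur}$ --- so it suffices to prove: if $\us\lneq\ur$ in $\CR^{\orb}_{\ud}$, there is $\ur'\in\CR^{\orb}_{\ud}$ with $\us\leq\ur'\lneq\ur$ and $\CO_{\rk=\ur'}\subseteq\overline{\CO}_{\rk=\ur}$. The theorem then follows by induction on $\sum_{i<j}(r_{ij}-s_{ij})$, with trivial base case $\ur=\us$ (the diagonals of both rank patterns equal $\ud$).

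For this one-step claim I would pass to lace diagrams. By Proposition~\ref{prop:mr_comparison} and Lemma~\ref{lem:Kostant_lace}, $\ur$ corresponds to a Kostant partition $\um$, hence to a $\Sym_{\ud}$-orbit of lace diagrams; fix one such diagram $L$. I would obtain $\ur'$ from $L$ by one of two local moves preserving $\ud$ (so $\ur'$ automatically lies in $\CR^{\orb}_{\ud}$): \emph{uncrossing} a properly nested pair of laces $[a,d]\supsetneq[b,c]$ (with $a<b$ and $c<d$) into the crossing pair $[a,c],[b,d]$, or \emph{splitting} a lace $[i,j]$, $i<j$, into $[i,k],[k{+}1,j]$. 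Either move strictly lowers some entry ($r_{ad}$, resp.\ $r_{ij}$), so $\ur'\lneq\ur$. Each move is geometric: it is realized by modifying the partial-permutation representative $A_*(L)$ only in the entries governing the laces involved, following one of two universal pencils. The uncrossing pencil lives in $\Rep_{(1,2,1)}$, with $A_1=(1,0)^{\top}$ fixed and $A_2=(t,1)$ (so $A_2A_1=t$); it lies in the orbit of $M_{02}\oplus M_{11}$ for $t\neq 0$ and specializes at $t=0$ into the orbit of $M_{01}\oplus M_{12}$. The splitting pencil lives in $\Rep_{(1,1)}$, where $A_1=t$ lies in the orbit of $M_{01}$ for $t\neq 0$ and specializes into the orbit of $M_{00}\oplus M_{11}$. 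The resulting curve in $\Rep_{\ud}$ lies in $\CO_{\rk=\ur}$ for generic parameter and passes through $\CO_{\rk=\ur'}$, so $\CO_{\rk=\ur'}\subseteq\overline{\CO}_{\rk=\ur}$.

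The hard part is the remaining combinatorial claim: when $\us\lneq\ur$, one of the two moves can always be chosen so that $\ur'$ still dominates $\us$. This needs enough understanding of the poset $\CR^{\orb}_{\ud}$ to know that from any $\ur$ strictly above $\us$ one can step ``toward $\us$'' by a single uncrossing or split --- concretely, choosing an index pair $(p,q)$ with $r_{pq}>s_{pq}$ extremal in a suitable sense, checking the corresponding move on $L$ is available, and checking it does not undershoot $\us$. This is precisely the lace-diagram analysis of \cite{abeasis-del_fra:equioriented}, and it is the step I expect to be genuinely fiddly. One clean equivalent formulation worth keeping in mind: the theorem says exactly that the quiver locus $Z_{\ur}:=\{A_*\in\Rep_{\ud}\mid\mathrm{rank}(A_j\cdots A_{i+1})\leq r_{ij}\text{ for all }i<j\}$ is irreducible. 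Indeed $Z_{\ur}$ is closed and equals $\bigcup_{\underline{t}\in\CR^{\orb}_{\ud},\ \underline{t}\leq\ur}\CO_{\rk=\underline{t}}$, while $\CO_{\rk=\ur}$ is open and nonempty in it, so irreducibility of $Z_{\ur}$ forces $\overline{\CO}_{\rk=\ur}=Z_{\ur}$. These quiver loci are known to be irreducible through their relation to Schubert varieties (cf.\ \cite{zelevinskii:schubert,lakshmibai-magyar:quiver-schubert,knutson-miller-shimozono}), giving an alternative but more machinery-heavy route.
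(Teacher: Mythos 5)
The paper itself gives no proof of Theorem~\ref{thm:closures}: it simply cites \cite{abeasis-del_fra:equioriented} (and notes in the following remark that \cite{bhattacharya-shewchuk} give a linear-algebraic proof), after observing that the forward direction is lower semicontinuity of rank and that the converse ``can be established using the combinatorics of lace diagrams.'' So there is no in-paper argument to compare you against, and your outline must be assessed on its own merits. Your framework is sound: the forward direction is right, the reduction to a single covering step is right, and you correctly identify the two minimal degenerations of the equioriented type A quiver. Indeed, from~\eqref{eqn:ExtIndecomposables}, the only nonsplit extensions are $0\to M_{uv}\to E\to M_{ij}\to 0$ with $i+1\le u\le j+1\le v$; when $u\le j$ the middle term $E$ is $M_{iv}\oplus M_{uj}$ (your ``uncross'' $M_{iv}\oplus M_{uj}\rightsquigarrow M_{ij}\oplus M_{uv}$), and when $u=j+1$ it is $M_{iv}$ (your ``split'' $M_{iv}\rightsquigarrow M_{ij}\oplus M_{j+1,v}$). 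Your one-parameter pencils realize these geometrically, exactly as you say.

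The genuine gap is the one you flag yourself and then do not close: given $\us\lneq\ur$ in $\CR^{\orb}_{\ud}$, you must produce a \emph{single} uncross or split with $\us\le\ur'\lneq\ur$, and this is where essentially all the work of \cite{abeasis-del_fra:equioriented} lives. It is not a triviality, because each move decreases not a single entry but a whole rectangle of entries of the rank pattern (the uncross $[a,d],[b,c]\rightsquigarrow[a,c],[b,d]$ decreases $r_{pq}$ by one exactly for $a\le p<b$ and $c<q\le d$, and the split $[i,j]\rightsquigarrow[i,k],[k+1,j]$ decreases $r_{pq}$ for $i\le p\le k<q\le j$); one must show that among the admissible moves there is a rectangle that does not cause any entry to drop below the corresponding entry of $\us$. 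Without that combinatorial lemma the induction does not start. Your closing reformulation --- that the theorem is equivalent to irreducibility of the quiver locus $Z_{\ur}=\{A_*: \mathrm{rank}(A_j\cdots A_{i+1})\le r_{ij}\}$ --- is correct and is indeed the route in the Schubert-variety literature you cite, but it substitutes one substantial theorem for another rather than filling the gap.
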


\begin{remark} \rm
Theorem \ref{thm:closures} is also proven in \cite{bhattacharya-shewchuk}.
It is generalized to the case of non equioriented type A quivers in \cite{abeasis-del_fra} and to equioriented type D quivers in \cite{ABEASIS198481}. The remaining Dynkin cases seem to be open.
\end{remark}

 The algebraic geometry and the singularities of the orbit closures $\overline{\CO}_{\rk=\ur}$ have been studied extensively, see Section \ref{sec:related} for references.
 


\section{Multiplication map and related loci}\label{sec:mult}

Consider the multiplication map
\[
\mult:\Rep_{\ud}\to \Mat_{d_{N},d_0}, (A_1,\ldots,A_N)\mapsto A_N\cdot A_{N-1}\ldots A_1
\]
We are interested in the geometry of the map $\mult$ and its fibers. Note that, for $g=(P_0,\ldots,P_N)\in G$ and $A_*\in \Rep_{\ud}$, we have
\begin{equation}\label{mu_equiv}
    \mult(g\cdot A_*)= (P_N A_N P^{-1}_{N-1})\cdot (P_{N-1} A_{N-1} P^{-1}_{N-2})\ldots (P_1 A_1 P_0^{-1})= \pi_\out(g)\cdot \mult(A_*)
\end{equation}
In other words, the map $\mult$ is \emph{equivariant} with respect to the actions of $G$ on the source (resp. $G_\out$ on the target) and the homomorphism $\pi_\out:G\to G_\out$. In particular, when $g\in G_\ins$, we have $\mult(g\cdot A_*)=\mult(A_*)$. The $G_\out$-orbits on $\Mat_{d_{N},d_0}$ are parametrized by {\em rank}, which suggests the following definition.

\begin{defn}
For $r\in \N$, define
\[
\Sigma_{\ud}^r:=\{A_*\in\Rep_{\ud} |\ \rk \mult(A_*)= r\}.
\]
and 
\[
\cSigma_{\ud}^r:=\{A_*\in\Rep_{\ud} |\ \rk \mult(A_*)\geq r\}
\]
\end{defn}

\begin{lemma}\label{lem:sigma_non_empty}
   $\Sigma_{\ud}^r\neq\emptyset$ if and only if $0\leq r\leq \min\ud$.
\end{lemma}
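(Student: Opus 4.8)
The plan is to prove both directions of the equivalence $\Sigma_{\ud}^r\neq\emptyset \iff 0\leq r\leq \min\ud$ directly, using the explicit lace-diagram construction of orbit representatives from Section~\ref{sec:laces}.

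\textbf{Necessity.} Suppose $\Sigma_{\ud}^r\neq\emptyset$, so there is a tuple $A_*=(A_1,\ldots,A_N)$ with $\rk(A_N\cdots A_1)=r$. Since rank cannot increase under composition, $\rk(A_N\cdots A_1)\leq \rk(A_i)\leq \min(d_{i-1},d_i)$ for every $i$, hence $r\leq \min_i d_i=\min\ud$. Of course $r\geq 0$ since it is a rank. This direction is immediate and requires no quiver machinery.

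\textbf{Sufficiency.} Fix $r$ with $0\leq r\leq \min\ud$. I want to exhibit a tuple whose product has rank exactly $r$; equivalently, by Proposition~\ref{prop:mr_comparison}, I want to produce a Kostant partition $\um\vdash\ud$ whose associated rank pattern has $r_{0N}=r$, and then invoke Theorem~\ref{thm:base_field}\eqref{equiv_empty} to know the corresponding orbit is nonempty (even over an arbitrary field $k$, since the lace-diagram representatives are partial permutation matrices). The cleanest choice is the lace diagram with $r$ copies of the longest interval $[0,N]$ together with $d_k - r$ isolated dots (intervals $[k,k]$) in each column $k$; this is a valid lace diagram precisely because $d_k - r\geq 0$ for all $k$. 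Its Kostant partition is $m_{0N}=r$, $m_{kk}=d_k-r$, all other $m_{ij}=0$, which indeed satisfies $d_k=\sum_{i\leq k\leq j}m_{ij}$. The product of the associated partial permutation matrices is the matrix sending $r$ chosen basis vectors of $k^{d_0}$ isomorphically onto $r$ basis vectors of $k^{d_N}$ and killing the rest, so it has rank exactly $r$. Hence $\Sigma_{\ud}^r\neq\emptyset$.

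\textbf{Main obstacle.} There is no serious obstacle: the only thing to be slightly careful about is confirming that the proposed lace diagram is genuinely admissible (each column has the right number of dots, which forces the bookkeeping $d_k-r\geq 0$) and that $\mult$ of the resulting orbit representative has the claimed rank. Both are routine once one unwinds the definition of $A_*(L)$ in Section~\ref{sec:laces}: the longest laces contribute a rank-$r$ identity block to the composite $A_N\cdots A_1$, and every isolated dot contributes nothing to the composite since its basis vector is already mapped to $0$ by $A_1$ (or never reaches column $N$). Alternatively, one can phrase the whole argument purely in terms of rank patterns and Proposition~\ref{prop:mr_comparison}, checking the inequalities $r_{ij}-r_{i,j+1}-r_{i-1,j}+r_{i-1,j+1}\geq 0$ for the explicit pattern, but the lace-diagram picture makes the construction transparent.
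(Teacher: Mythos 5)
Your proof is correct and follows essentially the same approach as the paper: the necessity direction is the elementary rank bound, and the sufficiency direction uses the lace diagram with $r$ full-length laces $[0,N]$ plus isolated dots, which is exactly the paper's ``lace diagram with $r$ horizontal intervals at the top.'' You simply spell out the bookkeeping more explicitly than the paper does.
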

\begin{proof}
    The direct implication is clear. For the converse, consider a lace diagram $L$ with $r$ horizontal intervals at the top. Then $A_*(L)\in \Sigma_{\ud}^r$.
\end{proof}

We are interested in the algebraic geometry of $\Sigma_{\ud}^r$ and $\cSigma_{\ud}^r$.

\begin{example} \rm \label{ex:222324}
 The sets $\cSigma^r_{\ud}$ are a natural object in linear algebra.
 \begin{itemize}
     \item $\cSigma^r_{a,b}$ is a \emph{determinantal variety}.
     \item $\Sigma^0_{(2,2,2)}$ is the collection of pairs of $2\times 2$ matrices $(A,B)$ for which $BA=0$. This variety turns out to have three components: (i) $\{A=0\}$ (of codimension 4), (ii) $\{B=0\}$ (of codimension 4), (iii) $\{\det(A)=\det(B)=0,BA=0\}$ (of codimension~3).
     \item $\Sigma^0_{(2,3,2)}$ is the collection of pairs of matrices $(A\in k^{3\times 2},B\in k^{2\times 3})$ for which $BA=0$. It has two components: (i) $\{\rk(A)\leq 1, BA=0\}$, (ii) $\{\rk(B)\leq 1, BA=0\}$. Both have codimension 4.
     \item  $\Sigma^0_{(2,4,2)}$ is the collection of pairs of matrices $(A\in k^{4\times 2},B\in k^{2\times 4})$ for which $BA=0$. It is irreducible of codimension 4.
     \item Let $\ud=(5,5,6,6,6,6)$. The variety $\Sigma^0_{\ud}$ has five codimension 19 components and many smaller dimensional components. One of the top dimensional components is 
     \begin{multline*}
        \ \hskip 1 true cm \{(A_1,A_2,A_3,A_4,A_5): \rk(A_1)\leq 3, \rk(A_2)\leq 4,\rk(A_4)\leq 4,\\ 
          \rk(A_2A_1)\leq 2,\rk(A_4A_3A_2)\leq 2,A_4A_3A_2A_1=0\}.
      \end{multline*}
      This, and other similar examples, can be verified using Theorem~\ref{thm:QIP} below. 
     \end{itemize}
\end{example}

By equivariance of $\mult$, we have stratifications by $G$-orbits
\[
\Sigma^r_{\ud}=\coprod_{\ur\in \CR_{\ud}^r}\CO_{\rk=\ur}=\coprod_{\um\in \CM_{\ud}^r}\CO_{\um}
\]
and
\[
\cSigma^r_{\ud}=\coprod_{\ur\in \CR_{\ud}^{\leq r}}\CO_{\rk=\ur}=\coprod_{\um\in \CM_{\ud}^{\leq r}}\CO_{\um}
\]
where
\[
\CR_{\ud}^{r}:=\{\ur\in \CR^\orb_{\ud}\ |\ r_{0N}= r\},\quad \CR_{\ud}^{\leq r}:=\{\ur\in \CR^\orb_{\ud}\ |\ r_{0N}\leq r\}
\]
and 
\[
\CM_{\ud}^r:=\{\um\in \CM^+_{\ud}\ |\ m_{0N}=r\},\quad\CM_{\ud}^{\leq r}:=\{\um\in \CM^+_{\ud}\ |\ m_{0N}\leq r\}.
\]

Theorem \ref{thm:closures} then implies

\begin{cor}\label{cor:irred_comp}
Let $0\leq r\leq \min\ud$.
\begin{enumerate}[label=(\alph*)]
    \item $\cSigma^r_{\ud}$ is the Zariski closure of $\Sigma^r_{\ud}$.
    \item The irreducible components of $\cSigma^r_{\ud}$ are in bijection with the minimal elements of $\CR_{\ud}^{\leq r}$ (or $\CM_{\ud}^{\leq r}$) under the partial order introduced in Section \ref{sec:hierarchy}.
\end{enumerate}
\end{cor}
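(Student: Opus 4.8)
The plan is to read both parts off the two orbit stratifications recorded just above, $\Sigma^r_\ud=\coprod_{\ur\in\CR^r_\ud}\CO_{\rk=\ur}$ and $\cSigma^r_\ud=\coprod_{\ur\in\CR^{\leq r}_\ud}\CO_{\rk=\ur}$, together with the closure relations of Theorem~\ref{thm:closures}. First I would note that $\CR^{\leq r}_\ud$ is downward closed in $(\CR^\orb_\ud,\leq)$ (if $\us\leq\ur$ then $s_{0N}\leq r_{0N}$), so Theorem~\ref{thm:closures} gives $\overline{\CO_{\rk=\ur}}=\coprod_{\us\leq\ur,\ \us\in\CR^\orb_\ud}\CO_{\rk=\us}\subseteq\cSigma^r_\ud$ for every $\ur\in\CR^{\leq r}_\ud$, hence
\[
\cSigma^r_\ud=\bigcup_{\ur\in\CR^{\leq r}_\ud}\overline{\CO_{\rk=\ur}}.
\]
This is a finite union of irreducible closed subsets whose inclusion poset is, again by Theorem~\ref{thm:closures}, isomorphic to $(\CR^{\leq r}_\ud,\leq)$; so $\cSigma^r_\ud$ is closed and its irreducible components are the $\overline{\CO_{\rk=\ur}}$ with $\ur$ maximal in $\CR^{\leq r}_\ud$, equivalently (using $m_{0N}=r_{0N}$ from Proposition~\ref{prop:mr_comparison}) with $\um$ maximal in $\CM^{\leq r}_\ud$ for the induced order. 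This is part (b). For part (a), one inclusion is free: $\Sigma^r_\ud\subseteq\cSigma^r_\ud$ and $\cSigma^r_\ud$ is closed, so $\overline{\Sigma^r_\ud}\subseteq\cSigma^r_\ud$. The reverse inclusion amounts to showing that each component $\overline{\CO_{\rk=\ur}}$ of $\cSigma^r_\ud$ meets $\Sigma^r_\ud$; since every orbit inside $\overline{\CO_{\rk=\ur}}$ has $(0,N)$-rank at most $r_{0N}$, this happens exactly when $r_{0N}=r$, i.e. $\CO_{\rk=\ur}\subseteq\Sigma^r_\ud$. Thus everything reduces to the combinatorial claim: \emph{for $\ur\in\CR^\orb_\ud$ with $r_{0N}<r\leq\min\ud$ there is $\us\in\CR^\orb_\ud$ with $\ur<\us$ and $s_{0N}=r_{0N}+1$} (so that every maximal element of $\CR^{\leq r}_\ud$ has $(0,N)$-entry $r$).

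I would prove this claim purely combinatorially using lace diagrams (Section~\ref{sec:laces}). Fix a lace diagram $L$ realizing the Kostant partition of $\ur$, with $p=r_{0N}$ spanning laces $[0,N]$. Since $p<\min\ud\leq d_i$ for every $i$, each column lies on a non-spanning lace, and through column $0$ that lace is some $[0,a]$ with $a<N$. The key move takes two laces $[0,a]$ (with $a<N$) and $[b,c]$ with $b\leq a+1\leq c$ and $(b,c)\neq(0,N)$, and replaces them by $[0,c]$ and $[b,a]$ (dropping the latter if $b=a+1$): a per-column dot count shows the result is again a lace diagram for a Kostant partition of $\ud$, and a short case check on $x\leq y$ shows every rank-pattern entry $r_{xy}$ is non-decreasing under the move, with $r_{0N}$ increasing by exactly $1$ iff $c=N$ and unchanged otherwise. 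I would then iterate, keeping a ``head'' lace $[0,a_t]$ with $a_t$ maximal among $\{a<N:m_{0a}>0\}$: column $a_t+1$ carries a non-spanning lace $[k,j]$ with $k\leq a_t+1\leq j$, and maximality of $a_t$ forces $k\geq1$, so the move applies to $[0,a_t]$ and $[k,j]$; if $j=N$ we are done, and otherwise $[0,j]$ becomes the new head with $a_{t+1}=j>a_t$, so after at most $N$ steps the process halts, producing $\us\geq\ur$ with $s_{0N}=p+1$ (hence $\us>\ur$).

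Granting the claim, the maximal elements of $\CR^{\leq r}_\ud$ all lie in $\CR^r_\ud$, so each component $\overline{\CO_{\rk=\ur}}$ of $\cSigma^r_\ud$ satisfies $\CO_{\rk=\ur}\subseteq\Sigma^r_\ud$ and hence $\cSigma^r_\ud=\bigcup_{\ur\text{ maximal}}\overline{\CO_{\rk=\ur}}\subseteq\overline{\Sigma^r_\ud}$, which completes part (a); part (b) is the first paragraph. The only non-formal ingredient is the combinatorial claim, and the step I expect to need the most care is checking that the iteration terminates at $s_{0N}=p+1$ without overshooting and that the head lace stays well-defined after each move — both governed by the bookkeeping on $a_t$. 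A possible shortcut worth trying is to reduce to the trivial case $r=0$, where $\cSigma^0_\ud=\Sigma^0_\ud=\mult^{-1}(0)$ is closed by definition, via the ``add longest roots'' correspondence of Theorem~\ref{thm:addlongest}; but making that correspondence interact cleanly with the unions $\cSigma^{\bullet}$ and their Zariski closures looks delicate, so I would rely on the lace-diagram argument above.
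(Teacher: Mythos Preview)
Your argument is correct and follows the same skeleton the paper has in mind: write $\cSigma^r_{\ud}$ as the finite union $\bigcup_{\ur\in\CR^{\leq r}_{\ud}}\overline{\CO_{\rk=\ur}}$ and read everything off Theorem~\ref{thm:closures}. The paper's proof is literally the sentence ``Theorem~\ref{thm:closures} then implies'', so for part~(b) your argument and the paper's coincide. Note that you say \emph{maximal} while the paper says \emph{minimal}; your version is the correct one (for $\ud=(2,2,2)$, $r=0$ there are three maximal rank patterns with $r_{02}=0$, matching the three components in Example~\ref{ex:222324}), so this is a typo in the paper that you have silently repaired.

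Where you genuinely add something is part~(a). The inclusion $\overline{\Sigma^r_{\ud}}\subseteq\cSigma^r_{\ud}$ is formal, but the reverse inclusion needs the fact that every maximal element of $\CR^{\leq r}_{\ud}$ actually lies in $\CR^r_{\ud}$; the paper does not supply this step. Your lace-diagram move is a clean way to establish it: the rank-monotonicity computation $(Q-P)(1-R)\geq 0$ is correct, the exclusion $(b,c)\neq(0,N)$ ensures $r_{0N}$ jumps by exactly~$1$ only at the terminal step, and the head index $a_t$ strictly increases while remaining maximal among $\{a<N:m_{0a}>0\}$ after each non-terminal move, so termination with $s_{0N}=p+1$ is guaranteed. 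One cosmetic point: since every maximal element of $\CR^{\leq r}_{\ud}$ ends up in $\CR^r_{\ud}$, the bijection in part~(b) can equally well be stated with $\CR^r_{\ud}$ in place of $\CR^{\leq r}_{\ud}$, which is perhaps what the reader expects given part~(a).
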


We can also reduce the study of the combinatorics of $\cSigma^r_{\ud}$ to the case $r=0$ for a different dimension vector.

\begin{lemma}\label{lem:rank_0}
   Let $\ud\in \N^{N+1}$ and $0\leq r\leq \min\ud$. Then the injective map 
   \[
   \Sigma^{0}_{\ud-r}\hookrightarrow \cSigma^r_{\ud}, C_*\mapsto \left(\begin{array}{cc}
       I_r & 0 \\
      0  & C_*
   \end{array} \right)
   \]
   induces a bijection of irreducible components and we have
   \[
   \codim_{\Rep_{\ud}} \cSigma^{r}_{\ud}=\codim_{\Rep_{\ud}} \Sigma^{r}_{\ud}= \codim_{\Rep_{\ud-r}} \Sigma^{0}_{\ud-r}
   \]
   or equivalently
   \[
   \dim\cSigma^{r}_{\ud}=\dim\Sigma^{r}_{\ud}= \dim\Sigma^{0}_{\ud-r} + \left(2\left(\sum_{i=0}^N d_i\right)-d_0-d_N\right)r-N r^2.
   \]
\end{lemma}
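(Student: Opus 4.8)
The plan is to establish the claimed bijection and dimension identities in three stages: first the map, then the bijection on components, then the numerics.

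\smallskip

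\textbf{Step 1: the map is well-defined and equivariant.} Given $C_* = (C_1, \ldots, C_N) \in \Sigma^0_{\ud - r}$ with $C_i \in \Mat_{d_i - r, d_{i-1} - r}$, the block matrices $A_i = \left(\begin{smallmatrix} I_r & 0 \\ 0 & C_i \end{smallmatrix}\right) \in \Mat_{d_i, d_{i-1}}$ satisfy $A_N \cdots A_1 = \left(\begin{smallmatrix} I_r & 0 \\ 0 & C_N \cdots C_1 \end{smallmatrix}\right) = \left(\begin{smallmatrix} I_r & 0 \\ 0 & 0 \end{smallmatrix}\right)$, which has rank exactly $r$. So the image lands in $\Sigma^r_{\ud} \subseteq \cSigma^r_{\ud}$, and injectivity is clear. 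The key structural observation is that the $G_{\ud-r}$-action on $\Rep_{\ud - r}$ extends to the block-lower-right copy $\widetilde{G} := \prod_i \GL_{d_i - r} \hookrightarrow G_{\ud}$ (embedding $P_i \mapsto \left(\begin{smallmatrix} I_r & 0 \\ 0 & P_i \end{smallmatrix}\right)$), and the map above is $\widetilde{G}$-equivariant. More importantly, I claim every orbit $\CO_{\rk = \ur}$ with $\ur \in \CR^{\leq r}_{\ud}$ meets the image: using Proposition~\ref{prop:mr_comparison} and the lace-diagram representatives of Section~\ref{sec:laces}, any $\ur \in \CR^{\leq r}_{\ud}$ with $r_{0N} = s \leq r$ corresponds to a Kostant partition $\um$ with $m_{0N} = s$; the lace diagram can be chosen with its $s$ longest laces using the top $s$ dots of every column, hence its representative $A_*(L)$ is block-diagonal $\left(\begin{smallmatrix} I_s & 0 \\ 0 & C_* \end{smallmatrix}\right)$ after possibly adjusting by $\widetilde{G}$, and after a further harmless modification one sees it is $G_\ins$-equivalent to something in the image of the map for the subdimension vector $\ud - r$.

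\smallskip

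\textbf{Step 2: bijection of irreducible components.} By Corollary~\ref{cor:irred_comp}(b), the components of $\cSigma^r_{\ud}$ correspond to minimal elements of $\CM^{\leq r}_{\ud}$ and those of $\Sigma^0_{\ud - r}$ (which equals $\cSigma^0_{\ud-r}$) to minimal elements of $\CM^{\leq 0}_{\ud - r} = \CM^0_{\ud-r} = \CM^+_{\ud-r}$. So it suffices to produce an order-preserving bijection between $\CM^{\leq r}_{\ud}$-minimal elements and $\CM^+_{\ud - r}$-minimal elements. The natural map sends $\um' \vdash \ud - r$ to $\um := \um' + r E_{0N}$ (add $r$ to the $(0,N)$-entry). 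This is a bijection onto $\{\um \vdash \ud : m_{0N} \geq r\} \subseteq \CM^{\leq r}_{\ud}$; and one checks that the \emph{minimal} elements of $\CM^{\leq r}_{\ud}$ all have $m_{0N} = r$ (equivalently $r_{0N} = r$): if a Kostant partition in $\CM^{\leq r}_{\ud}$ had $m_{0N} < r$, replacing one shorter lace... actually the cleanest argument is via rank patterns — minimality forces $r_{0N}$ as large as allowed, i.e. $r_{0N} = r = \min(\ud)$-type bound from Lemma~\ref{lem:sigma_non_empty}-style reasoning, and then the correspondence is monotone because adding $r E_{0N}$ shifts all the associated rank-pattern entries $r_{ij}$ by exactly $r$ (Theorem~\ref{thm:addlongest} is the geometric shadow of this). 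Monotonicity in both directions then gives the bijection of minimal elements, hence of components.

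\smallskip

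\textbf{Step 3: codimension and dimension formulas.} The equality $\codim_{\Rep_\ud}\cSigma^r_\ud = \codim_{\Rep_\ud}\Sigma^r_\ud$ is immediate from part (a) of Corollary~\ref{cor:irred_comp} (same closure, hence same dimension). For $\codim_{\Rep_\ud}\Sigma^r_\ud = \codim_{\Rep_{\ud-r}}\Sigma^0_{\ud-r}$: pick a top-dimensional component, corresponding to a minimal $\um \in \CM^{\leq r}_\ud$ with $m_{0N} = r$, matched with $\um' = \um - r E_{0N} \vdash \ud - r$; then $\CO_{\um} \subset \Rep_\ud$ and $\CO_{\um'} \subset \Rep_{\ud - r}$ differ exactly by $r$ copies of the longest indecomposable $M_{0N}$, so Theorem~\ref{thm:addlongest} gives $\codim_{\Rep_\ud}\CO_{\um} = \codim_{\Rep_{\ud-r}}\CO_{\um'}$; taking the minimum over components (a bijection by Step 2) yields the codimension equality. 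Finally the dimension formula is pure bookkeeping: $\dim\Sigma^r_\ud = \dim\Rep_\ud - \codim_{\Rep_\ud}\Sigma^r_\ud = \dim\Rep_\ud - \codim_{\Rep_{\ud-r}}\Sigma^0_{\ud-r} = \dim\Sigma^0_{\ud-r} + (\dim\Rep_\ud - \dim\Rep_{\ud-r})$, and by~\eqref{eq:dim_Rep}, $\dim\Rep_\ud - \dim\Rep_{\ud-r} = \sum_{i=1}^N\bigl(d_{i-1}d_i - (d_{i-1}-r)(d_i-r)\bigr) = \sum_{i=1}^N\bigl(r(d_{i-1}+d_i) - r^2\bigr) = \bigl(2\sum_{i=0}^N d_i - d_0 - d_N\bigr)r - Nr^2$, as claimed.

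\smallskip

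\textbf{Main obstacle.} Steps 1 and 3 are essentially formal; the real content is Step 2, specifically the assertion that every minimal element of $\CM^{\leq r}_\ud$ has $m_{0N} = r$ and that $\um \mapsto \um - r E_{0N}$ is a well-defined order-isomorphism onto the minimal elements of $\CM^+_{\ud - r}$. The subtlety is that ``$\um' \mapsto \um' + r E_{0N}$'' need not land in $\CM^{\leq r}_\ud$ unless one already knows $m'_{0N} + r \leq \min(\ud - r) + r$ type bounds, and conversely one must rule out the possibility that a minimal orbit of $\cSigma^r_\ud$ ``uses up'' its allotment of rank $r$ through shorter laces rather than the longest ones. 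I expect the clean way through is to argue entirely on rank patterns: show directly that if $\ur \in \CR^{\leq r}_\ud$ is minimal then $r_{0N} = r$ (otherwise one could decrease some $r_{ij}$ — build the smaller lace diagram explicitly), and that subtracting the constant $r$ from every entry of such an $\ur$ gives exactly the rank patterns in $\CR^\orb_{\ud - r}$, bijectively and order-preservingly, which is precisely the combinatorial heart of Theorem~\ref{thm:addlongest}.
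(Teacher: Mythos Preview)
Your Steps~1 and~3 are fine (Step~1 is more than is needed). Step~2 contains an actual error and then manufactures an obstacle that the tools you already cite would dissolve.

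The error: you write $\CM^{\leq 0}_{\ud-r} = \CM^0_{\ud-r} = \CM^+_{\ud-r}$. The last equality is false --- $\CM^0_{\ud-r}$ consists of Kostant partitions with $m_{0N}=0$, whereas $\CM^+_{\ud-r}$ allows any $m_{0N}\in\{0,\ldots,\min(\ud-r)\}$. Correspondingly, your map $\um'\mapsto\um'+rE_{0N}$, taken on all of $\CM^+_{\ud-r}$, lands in $\{\um\vdash\ud : m_{0N}\geq r\}$, which is \emph{not} contained in $\CM^{\leq r}_{\ud}$; the two sets meet only in $\CM^r_{\ud}$. So the map you actually want is $\CM^0_{\ud-r}\to\CM^r_{\ud}$, not $\CM^+_{\ud-r}\to\CM^{\leq r}_{\ud}$.

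The ``main obstacle'' you flag --- that the extremal orbits in $\cSigma^r_{\ud}$ all have $r_{0N}=r$ --- is exactly Corollary~\ref{cor:irred_comp}(a), which you already invoke in Step~3: since $\cSigma^r_{\ud}$ is the Zariski closure of $\Sigma^r_{\ud}$, every component of the former is the closure of an orbit in the latter, and those lie in $\CR^r_{\ud}$ by definition. There is nothing further to argue. The paper's proof exploits this to work directly with $\CR^r_{\ud}$ rather than $\CR^{\leq r}_{\ud}$, after which the entire component argument is the one-line bijection
\[
\CR^r_{\ud}=\{\ur+r\mid \ur\in\CR^0_{\ud-r}\}
\]
(add $r$ to every entry of the rank pattern), which is trivially order-preserving and hence matches extremal elements on both sides; Theorem~\ref{thm:addlongest} and formula~\eqref{eq:dim_Rep} then finish exactly as in your Step~3. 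This rank-pattern shift is precisely what you suggest in your final sentence --- lead with it rather than detouring through Kostant partitions.
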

\begin{proof}
We have
\[
\CR_{\ud}^r:=\{\ur+r\ |\ \ur\in \CR^0_{\ud-r}\}
\]
hence the result follows from Corollary~\ref{cor:irred_comp}, Theorem \ref{thm:addlongest} and Formula \eqref{eq:dim_Rep}.
\end{proof}

This also gives information about the geometry of the fibers of $\mult$, because of the following elementary observation.

\begin{lemma}\label{lem:rank_vs_fibers}
Let $\ud\in \N^{N+1}$ and $0\leq r\leq \min\ud$. Then (when $k=\C$) the restriction of $\mult$ to $\Sigma^r_{\ud}$ is a locally trivial bundle over the manifold $\Mat_{d_N,d_0}^{\rk=r}$. 

Hence for every $k$ and every $B\in \Mat_{d_N,d_0}^{\rk=r}$, there is a bijection between the irreducible components of $\Sigma^r_{\ud}$ and of $\mult^{-1}(B)$, and we have
\[
 \codim_{\Rep_{\ud}} \mult^{-1}(B)= \codim_{\Rep_{\ud}}  \Sigma^r_{\ud} + r(d_0+d_N-r)
\]
or equivalently
\[
\dim \mult^{-1}(B)=\dim \Sigma^r_{\ud} - r(d_0+d_N-r).
\]
\end{lemma}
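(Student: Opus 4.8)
The plan is to realize $\mu:=\mult|_{\Sigma^r_{\ud}}\colon \Sigma^r_{\ud}\to \Mat_{d_N,d_0}^{\rk=r}$ as a fiber bundle associated to the orbit map $G_{\out}\to G_{\out}/H\cong \Mat_{d_N,d_0}^{\rk=r}$, exploiting the equivariance \eqref{mu_equiv} of $\mult$. First I would fix notation. Over any field, two $d_N\times d_0$ matrices of the same rank lie in a single $\GL_{d_N}\times\GL_{d_0}$-orbit, so $G_{\out}$ acts transitively on $\Mat_{d_N,d_0}^{\rk=r}$; I fix the standard rank-$r$ matrix $B_0=\left(\begin{smallmatrix} I_r & 0 \\ 0 & 0\end{smallmatrix}\right)$, set $H:=\mathrm{Stab}_{G_{\out}}(B_0)$, and choose the group homomorphism $s\colon G_{\out}\to G$, $(P_0,P_N)\mapsto (P_0,I,\dots,I,P_N)$, which is a section of $\pi_{\out}$. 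By \eqref{mu_equiv} we have $\Sigma^r_{\ud}=\mult^{-1}(\Mat_{d_N,d_0}^{\rk=r})$, and by Lemma~\ref{lem:sigma_non_empty} together with transitivity of $G_{\out}$, the map $\mu$ is surjective.

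The core step is that for any section $\tau\colon U\to G_{\out}$ of the orbit map $o\colon G_{\out}\to\Mat_{d_N,d_0}^{\rk=r}$, $o(P)=P\cdot B_0$, over an open $U$, the map
\[
\Phi_U\colon U\times\mult^{-1}(B_0)\longrightarrow\mu^{-1}(U),\qquad (u,x)\longmapsto (s\circ\tau)(u)\cdot x,
\]
is an isomorphism, with inverse $y\mapsto\bigl(\mu(y),\,(s\circ\tau)(\mu(y))^{-1}\cdot y\bigr)$; well-definedness of both and mutual inverseness are a one-line check from \eqref{mu_equiv} and $o\circ\tau=\mathrm{id}_U$. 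When $k=\C$, $o$ is a submersion onto the complex manifold $G_{\out}/H$, so local holomorphic sections $\tau$ exist and $\mu$ is a locally trivial bundle with fiber $\mult^{-1}(B_0)$, which is the first assertion. Moreover the block form of $H$ exhibits it as an extension of $\GL_r\times\GL_{d_0-r}\times\GL_{d_N-r}$ by a vector group, hence a \emph{special} group in Serre's sense, so $o$ is even Zariski-locally trivial and the $\Phi_U$ are isomorphisms of $k$-varieties for an arbitrary field $k$ — this is the form I would use below.

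To deduce the statements on components and dimension, note that $\Mat_{d_N,d_0}^{\rk=r}\cong G_{\out}/H$ is irreducible of dimension $\dim G_{\out}-\dim H=r(d_0+d_N-r)$. Since $G$ is connected it preserves every irreducible component $Z$ of the $G$-stable variety $\Sigma^r_{\ud}$, so $\mu(Z)$ is a nonempty $G_{\out}$-stable — hence dense — subset of the orbit $\Mat_{d_N,d_0}^{\rk=r}$, and $\Phi_U^{-1}(Z)=U\times(Z\cap\mult^{-1}(B_0))$ because $Z$ is $G$-stable. Hence $Z\cap\mult^{-1}(B_0)$ is irreducible of dimension $\dim Z-r(d_0+d_N-r)$; distinct components $Z$ yield distinct such sets and these are precisely the irreducible components of $\mult^{-1}(B_0)$ (using $\overline{G\cdot(Z\cap\mult^{-1}(B_0))}=Z$, which follows since the left side is $G$-stable, closed, contained in $Z$, and dense in $Z$). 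For a general rank-$r$ matrix $B=P\cdot B_0$ with $P\in G_{\out}(k)$ one has $\mult^{-1}(B)=s(P)\cdot\mult^{-1}(B_0)$ as $k$-varieties, which transports the component bijection to $B$. Taking $Z$ of top dimension gives $\dim\mult^{-1}(B)=\dim\Sigma^r_{\ud}-r(d_0+d_N-r)$, equivalently the stated codimension formula since $\Rep_{\ud}$ is an affine space.

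The only genuinely non-formal ingredient is the existence of local sections of the orbit map $o$: analytically over $\C$ this is immediate from the submersion property, but to obtain the component and dimension statements cleanly over an arbitrary base field I would invoke that the stabilizer $H$ is a special group (alternatively, one could base-change to $\bar k$ and observe that the relevant components — closures of $G$-orbits, cut out over the prime field by rank conditions — are geometrically irreducible).
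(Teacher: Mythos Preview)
Your argument is correct and follows essentially the same route as the paper: both exploit the $G_{\out}$-equivariance of $\mult$ and lift local sections of the orbit map $G_{\out}\to G_{\out}/H\cong\Mat_{d_N,d_0}^{\rk=r}$ through the splitting $s:G_{\out}\hookrightarrow G$ to produce local trivialisations of $\mu$. The one difference is how general $k$ is handled: the paper reduces to $k=\C$ via Theorem~\ref{thm:base_field}, whereas you instead observe that the stabilizer $H$ is a special group so the orbit map is Zariski-locally trivial over any field---this is a cleaner (and more informative) way to get the component and dimension statements uniformly in $k$, and also makes explicit the component bijection that the paper leaves implicit.
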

\begin{proof}
We are free to assume $k=\C$ by Theorem \ref{thm:base_field}. As mentioned above, $\Mat_{d_N,d_0}^{\rk=r}$ is an orbit of the action of $G_\out$ on $\Mat_{d_N,d_0}$. Fix $B\in\Mat_{d_N,d_0}^{\rk=r}$ and $A_*\in \mult^{-1}(B)$ as base points. Let $G^B_\out$ be the stabilizer of $B$, so that the action map induces an isomorphism $\Mat_{d_N,d_0}^{\rk=r}$. Then local sections of the induced smooth submersion $G_\out\to \Mat_{d_N,d_0}^{\rk=r}$ induce local trivialisations of $\mult:\Sigma^r_{\ud}\to \Mat_{d_N,d_0}^{\rk=r}$ by $G_\out$-equivariance.
\end{proof}

\begin{remark}\label{rmk:real_points} \rm
    When $k=\R$, one can also ask about the irreducible components of $\cSigma^r_{\ud}$ as a \emph{real analytic set}. Those are not in general the same as the algebraically irreducible components, because the real points of the orbits $\CO_{\um}$ can be disconnected. So in particular one should be careful about how to interpret the number $\theta$ over $\R$. However by Theorem~\ref{thm:base_field}, we know that all algebraically irreducible components have real points, and that those real points have the same codimension as the algebraic codimension, so the number $C$ is unambiguous.

    Note that the connected components of $\mult^{-1}(B)(\R)$ are known \cite[Theorem 5]{trager-kohn-bruna:DLN-spurious-critical}.
\end{remark}

\begin{cor}\label{cor:min_ud}
 Let $\ud\in \N^{N+1}$ and $B\in \Mat_{d_N,d_0}^{\rk=\min{\ud}}$. Then $\Sigma^{\min{\ud}}_{\ud}$ and $\mult^{-1}(B)$ are smooth and (algebraically) connected varieties. 
\end{cor}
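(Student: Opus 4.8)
The plan is to reduce the statement to showing that $\CR^{\min\ud}_{\ud}$ has a unique element, which by Corollary~\ref{cor:irred_comp} forces $\Sigma^{\min\ud}_{\ud}$ (equivalently $\cSigma^{\min\ud}_{\ud}$, by Corollary~\ref{cor:irred_comp}(a)) to be a single $G$-orbit, hence smooth and connected by Theorem~\ref{thm:base_field}(3) — and then transfer this to $\mult^{-1}(B)$ via Lemma~\ref{lem:rank_vs_fibers}. So the first step is: set $m = \min\ud$ and use the shift described in Lemma~\ref{lem:rank_0}, which identifies $\CR^{m}_{\ud}$ with $\{\ur + m : \ur \in \CR^0_{\ud - m}\}$, so it suffices to prove that $\CR^0_{\ud'}$ is a singleton when $\ud' = \ud - m$ has at least one zero coordinate. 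But $\CR^0_{\ud'}$ with $\min\ud' = 0$ is even easier to handle directly: if $d'_k = 0$ for some $k$, then every composition $A_j\cdots A_{i+1}$ passing through position $k$ already has rank $0$, and a $Q$-module with $V_k = 0$ splits as a direct sum of a module supported on $\{0,\dots,k\}$ and one supported on $\{k,\dots,N\}$, neither of which contains $M_{0N}$.

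The cleanest way to organize this is via Kostant partitions. A Kostant partition $\um \in \CM^0_{\ud'}$ has $m_{0N} = 0$ and, since $d'_k = 0$, also $m_{ij} = 0$ for every pair with $i \le k \le j$ (because $d'_k = \sum_{i\le k\le j} m_{ij}$ and all terms are nonnegative). Thus any $\um \in \CM^0_{\ud'}$ is determined by its restrictions to the sub-quivers on vertices $\{0,\dots,k-1\}$ and $\{k+1,\dots,N\}$ with dimension vectors $(d'_0,\dots,d'_{k-1})$ and $(d'_{k+1},\dots,d'_N)$. Iterating: split $\ud'$ at every zero coordinate into maximal blocks of strictly positive coordinates, and within each such block the longest-root multiplicity (the $m$ connecting the two ends of the block) is forced to be the minimum over that block, which is $0$ only if the block is trivial — wait, the relevant point is simply that $\cSigma^0$ restricted to a block with all coordinates $\geq 1$ can still have several components. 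So instead I keep the induction global: I claim $\Sigma^0_{\ud'}$ (with $\min \ud' = 0$) is a single orbit because the product of matrices through a zero space is automatically zero, so $\Sigma^0_{\ud'} = \Rep_{\ud'}$, which is an affine space, hence smooth, irreducible, and connected. Concretely: if $d'_k = 0$ then $A_N\cdots A_1$ factors through $k^{d'_k} = 0$, so $\mult \equiv 0$ on all of $\Rep_{\ud'}$, giving $\Sigma^0_{\ud'} = \Rep_{\ud'} \cong \A^{D'}$.

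With that, the proof of the corollary runs as follows. First, $\ud' := \ud - \min\ud$ has a zero coordinate, so $\Sigma^0_{\ud'} = \Rep_{\ud'}$ is smooth and connected. By Lemma~\ref{lem:rank_0}, the map $C_* \mapsto \left(\begin{smallmatrix} I_r & 0 \\ 0 & C_*\end{smallmatrix}\right)$ with $r = \min\ud$ gives a bijection of irreducible components between $\Sigma^0_{\ud'}$ and $\cSigma^{\min\ud}_{\ud}$; since the former is irreducible, so is the latter, and since $\cSigma^{\min\ud}_{\ud}$ is the closure of $\Sigma^{\min\ud}_{\ud}$ (Corollary~\ref{cor:irred_comp}(a)), $\Sigma^{\min\ud}_{\ud}$ is irreducible too. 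Irreducibility together with the orbit stratification forces $\Sigma^{\min\ud}_{\ud}$ to be a single $G$-orbit $\CO_{\rk=\ur}$, which is smooth (it is homogeneous, or invoke Theorem~\ref{thm:base_field}(3)) and connected over an algebraically closed field; over $\R$ one argues that $\Rep_{\ud'}(\R)$ is connected and the section is continuous, or simply notes that the statement is about algebraic connectedness. Finally, Lemma~\ref{lem:rank_vs_fibers} with $r = \min\ud$ gives a bijection of irreducible components between $\Sigma^{\min\ud}_{\ud}$ and $\mult^{-1}(B)$ and realizes $\mult^{-1}(B)$ as the total space of a locally trivial bundle (over $\C$) with smooth connected fiber — or, more robustly for general $k$, one transports smoothness and connectedness along the bundle structure, noting the base $\Mat_{d_N,d_0}^{\rk=\min\ud}$ is itself smooth and connected. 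I expect the only mild subtlety to be the bookkeeping over $\R$ versus $\C$ — i.e. making sure "connected" is interpreted consistently (algebraically connected, as flagged in Remark~\ref{rmk:real_points}) — rather than any genuine difficulty; the geometric content is just "multiplying through a zero-dimensional space kills everything."
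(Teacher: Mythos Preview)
Your approach is essentially the paper's: reduce via Lemma~\ref{lem:rank_0} to the observation that $\Sigma^{0}_{\ud-\min\ud}=\Rep_{\ud-\min\ud}$ (because the product factors through a zero-dimensional space), then transport to $\mult^{-1}(B)$ via Lemma~\ref{lem:rank_vs_fibers}. The paper's proof is exactly this, stated in one line.

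There is, however, a genuine error in one step. You write that ``irreducibility together with the orbit stratification forces $\Sigma^{\min\ud}_{\ud}$ to be a single $G$-orbit''. This is false: an irreducible $G$-variety with finitely many orbits is a single orbit \emph{closure}, not a single orbit. Concretely, take $\ud=(2,2,1)$, $\min\ud=1$. There are two Kostant partitions with $m_{02}=1$ (namely $(m_{00},m_{01},m_{11})=(1,0,1)$ and $(0,1,0)$), so $\Sigma^{1}_{(2,2,1)}$ contains two orbits. Your argument for smoothness, which goes through homogeneity of a single orbit, therefore breaks.

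The fix is immediate and already implicit in what you wrote. Since $\cSigma^{\min\ud}_{\ud}$ is irreducible of codimension $0$ (by your Lemma~\ref{lem:rank_0} argument), it equals $\Rep_{\ud}$; hence $\Sigma^{\min\ud}_{\ud}$ is a nonempty \emph{open} subvariety of the affine space $\Rep_{\ud}$ and is therefore smooth and irreducible. No appeal to a single orbit is needed. With this correction your argument for $\mult^{-1}(B)$ via Lemma~\ref{lem:rank_vs_fibers} goes through unchanged.
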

\begin{proof}
 This follows from Lemma~\ref{lem:rank_0} and Lemma~\ref{lem:rank_vs_fibers} together with the observation that, when $\min{\ud}=0$, we have $\Sigma^0_{\ud}=\Rep_{\ud}$.
\end{proof}

\section{Poincar\'e series} \label{sec:PoincareSection}
In this section we present a Poincar\'e series calculation that results in calculating the dimension and the number of top-dimensional components of $\Sigma_{\ud}^r$. We will use freely some tools from algebraic topology, namely equivariant cohomology of topological spaces acted upon by Lie groups. Those are standard tools in the study of quiver representations \cite{Reineke,RRcoha}.
\subsection{Pochammer symbols and the main statement}

\begin{defn}
    For a non-negative integer $s$ define
    \[
    \CP_s=\frac{1}{(1-q)(1-q^2)\cdots (1-q^s)},\]
    the inverse of the $q$-Pochhammer symbol. We will identify this function with its $|q|<1$ formal power series. For a multiset $\underline{h}=\{h_1,h_2,\ldots\}$ of nonnegative integers (i.e., a dimension vector or a Kostant partition), define 
    \[
    \CP_{\underline{h}}=\prod_i \CP_{h_i}.
    \]
\end{defn}

Given an evenly graded vector space $V=\oplus_n V_{2n}$, its Poincar\'e series is the formal power series $P(V)=\sum_n \dim(V_{2n}) q^n$. The inverse Pochammer symbol has an interpretation the generating series of the number of partitions with at most $s$ parts, and consequenctly as a Poincar\'e series in equivariant cohomology:
\[
\CP_s=\sum_{\mu_1\geq\ldots\geq \mu_s\geq 0} q^{|\mu|}=
P(\C[c_1,\ldots,c_s])=
P( H^*(B\!\GL_s(\C))) = P( H^*_{\GL_s}(\pt)).
\]
Here we have $\deg(c_i)=2i$. 

For later purposes we recall the infinite series expansion of the infinite Pochhammer symbol $(x;q)_{\infty}=\prod_{s=1}^\infty (1-xq^s)$ and its inverse 
\cite[Cor.~10.2.2]{special}: 
\begin{eqnarray} \notag
    \frac{1}{(x;q)_{\infty}}  &=& \sum_{s=0}^\infty \CP_s x^s \\
    \label{eqn:Ps_inverse}
    (x;q)_{\infty}  &=& \sum_{s=0}^\infty (-1)^s q^{\binom{s}{2}}\CP_s x^s.
\end{eqnarray}

We encode the codimensions of the orbits into a power series as follows:

\begin{defn}
Let $\ud\in \N^{N+1}$ and $r\in \N$.
\[Q_{\ud}^r:=\sum_{\substack{ \um \vdash \ud \\ m_{0N}=r}} q^{\codim(\CO_{\um})} \CP_{\um}\]
\end{defn}

The role of $Q_{\ud}^r$ in our study is clarified by:

\begin{lemma}\label{lem:Qs_codim}
     Let $0\leq r\leq\min\ud$, let $C$ be the codimension of $\cSigma^r_{\ud} \subset \Rep_{\ud}$ and $\theta$ the number of its top-dimensional irreducible components. Then  
    \[Q^r_{\ud}=\theta q^C+\text{higher order terms}.\]
\end{lemma}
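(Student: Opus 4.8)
The plan is to interpret $Q^r_{\ud}$ as a Poincaré series in $G$-equivariant cohomology and extract the desired information from its lowest-degree term. First I would recall the basic fact from equivariant cohomology of linear group actions that, for a locally closed $G$-invariant subvariety $Y \subseteq \Rep_{\ud}$ stratified by finitely many $G$-orbits $\CO_{\um}$ (here $Y = \Sigma^r_{\ud}$, whose closure is $\cSigma^r_{\ud}$ by Corollary~\ref{cor:irred_comp}), the spectral sequence associated to the filtration by codimension degenerates rationally for dimension-parity reasons, so that the equivariant Poincaré series of $Y$ splits as a sum over strata:
\[
P(H^*_G(\Sigma^r_{\ud})) = \sum_{\substack{\um \vdash \ud \\ m_{0N}=r}} q^{\codim_{\Rep}(\CO_{\um})}\, P(H^*_G(\CO_{\um})).
\]
Each orbit $\CO_{\um} \cong G/G_{\um}$ is equivariantly formal with $H^*_G(\CO_{\um}) = H^*_{G_{\um}}(\pt)$, and the stabilizer $G_{\um}$ of a module $M = \bigoplus m_{ij} M_{ij}$ is the automorphism group $\mathrm{Aut}_Q(M)$, whose maximal compact (up to the contribution of a unipotent radical, which does not affect cohomology) is a product $\prod_{ij} \GL_{m_{ij}}$. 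Hence $P(H^*_G(\CO_{\um})) = \prod_{ij}\CP_{m_{ij}} = \CP_{\um}$, which identifies the right-hand side above with $Q^r_{\ud}$.

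Next I would identify the lowest-degree behavior. Each summand $q^{\codim(\CO_{\um})}\CP_{\um}$ is a power series with lowest term $q^{\codim(\CO_{\um})}$ and nonnegative coefficients; no cancellation can occur in the sum because all coefficients are nonnegative. Therefore the order of vanishing of $Q^r_{\ud}$ at $q=0$ is $\min_{\um} \codim(\CO_{\um})$, where the minimum runs over $\um \vdash \ud$ with $m_{0N}=r$; by Corollary~\ref{cor:irred_comp} (the irreducible components of $\cSigma^r_{\ud}$ correspond to minimal elements of $\CM^{\leq r}_{\ud}$, combined with Lemma~\ref{lem:sigma_non_empty} and the fact that an orbit in $\cSigma^r_\ud$ not contained in $\Sigma^r_\ud$ has strictly larger rank hence lies in a proper orbit closure, so every top-dimensional component is the closure of some $\CO_{\um}$ with $m_{0N}=r$) this minimum equals $C = \codim_{\Rep}\cSigma^r_{\ud}$. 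The coefficient of $q^C$ is then $\#\{\um \vdash \ud : m_{0N}=r,\ \codim(\CO_{\um}) = C\}$, since only these summands contribute in degree $C$ and each contributes exactly $1$ (the constant term of $\CP_{\um}$). Finally, I would argue that these top-codimension orbits are in bijection with the top-dimensional irreducible components of $\cSigma^r_{\ud}$: distinct orbits of the same top dimension have distinct closures, and no such orbit closure is contained in another (else the dimensions would differ), so each is a maximal irreducible closed subset, i.e.\ an irreducible component; conversely every component is such a closure. This yields $Q^r_{\ud} = \theta q^C + (\text{higher order})$.

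The main obstacle I anticipate is making the equivariant-cohomology bookkeeping fully rigorous: specifically, justifying the degeneration of the stratification spectral sequence, which requires that all the relevant cohomology groups be concentrated in even degree (true here because $BG$, classifying spaces of products of $\GL$'s, and all orbits $G/G_{\um}$ with $G_{\um}$ again a product of $\GL$'s extended by a unipotent group, have cohomology only in even degrees), and confirming that the equivariant cohomology of an orbit depends only on the reductive quotient of the stabilizer (the unipotent radical of $\mathrm{Aut}_Q(M)$ being contractible). Identifying $\mathrm{Aut}_Q(M)^{\mathrm{red}} \cong \prod_{ij}\GL_{m_{ij}}$ follows from $\Hom_Q(M_{ij}, M_{i'j'})$ being at most one-dimensional and the block-upper-triangular structure of $\mathrm{End}_Q(M)$ under the appropriate partial order on intervals; I would cite or briefly verify this. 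Everything after the Poincaré series identity is elementary manipulation of power series with nonnegative coefficients, and the bijection with irreducible components is immediate from Theorem~\ref{thm:closures} and Corollary~\ref{cor:irred_comp}.
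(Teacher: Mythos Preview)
Your proposal is correct, and the core of your argument (the second paragraph) is exactly the paper's proof: since each $\CP_{\um}=1+\text{higher order terms}$ and all coefficients are nonnegative, the lowest-degree term of $Q^r_{\ud}$ is $n q^c$ with $c=\min_{\um}\codim(\CO_{\um})$ (over $\um\vdash\ud$, $m_{0N}=r$) and $n$ the number of orbits achieving this minimum; then one identifies $c=C$ and $n=\theta$ via Corollary~\ref{cor:irred_comp}.

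However, your first paragraph and ``main obstacle'' section are an unnecessary detour. The lemma concerns $Q^r_{\ud}$ purely as a formal power series defined by a finite sum, and its lowest-degree term can be read off directly from the definition without any cohomological interpretation. You do not need to prove that $Q^r_{\ud}=P(H^*_G(\Sigma^r_{\ud}))$, nor justify spectral sequence degeneration, nor compute $\mathrm{Aut}_Q(M)$. The paper postpones all such equivariant-cohomology machinery to the proof of Theorem~\ref{thm:PdPm}, where it is actually required; Lemma~\ref{lem:Qs_codim} itself is purely combinatorial. Your instinct to reach for equivariant cohomology is understandable given the context, but here it obscures a two-line argument.
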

\begin{proof}
       Let the lowest degree term of $Q^r_{\ud}$ be $nq^c$ (temporary notation). Since $\CP_{\um}=1+$higher degree terms, the definition of $Q^r_{\ud}$ implies that $c$ is the codimension of 
    $\bigcup_{\substack{ \um \vdash \ud \\ m_{0N}=r}} \CO_{\um}$. As we saw in Section \ref{sec:mult}, this union is precisely $\Sigma^r_{\ud}$, and we have moreover $\codim \Sigma^r_{\ud}=\codim \cSigma^r_{\ud}$. Moreover, $n$ is the number of orbits whose codimension is minimal. However, if an orbit closure is not a component, then it cannot have minimal codimension among orbit closures. Therefore, $n$ is also the number of the minimal codimensional {\em irreducible components}.
\end{proof}

\begin{remark} \rm
In the theory of characteristic classes of singularities---not discussed in this paper except in Appendix~\ref{sec:app_theta}---the sum $\sum_{s=0}^r Q^s_{\ud}$ is the Poincar\'e series of the ideal of cohomology classes ``universally supported'' on $\cSigma_{\ud}^r$.
\end{remark}

The main result of this section is then a purely algebraic formula for $Q^r_{\ud}$. 

\begin{thm} \label{thm:Pseries}
     We have
  \begin{equation}\label{eq:Fd_formula}
    Q^r_{\ud}=
    \CP_r\cdot \sum_{s=0}^{\min \ud-r} (-1)^s q^{\binom{s}{2}} \CP_s \CP_{\ud-r-s}.    
    \end{equation}
\end{thm}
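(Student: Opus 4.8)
The plan is to compute $Q^r_{\ud}$ directly from its definition as a sum over Kostant partitions with $m_{0N}=r$, using Theorem~\ref{thm:addlongest} to isolate the contribution of the longest root and reduce to the case $r=0$, and then recognizing the resulting combinatorial identity as a specialization of the Pochhammer expansion \eqref{eqn:Ps_inverse}. First I would handle the longest-root variable: if $\um\vdash\ud$ with $m_{0N}=r$, write $\um=\um'+r E_{0N}$ where $\um'$ is a Kostant partition of $\ud$ with $m'_{0N}=0$ -- but this is not quite a Kostant partition of a shifted dimension vector, so instead I would argue as follows. By Theorem~\ref{thm:addlongest}, $\codim(\CO_{\um})$ depends only on $\um$ through the non-$(0,N)$ entries in the sense that adding copies of $M_{0N}$ does not change the codimension; more precisely, for $\um\vdash\ud$ with $m_{0N}=r$, if $\um^\flat$ is the Kostant partition of $\ud-r$ obtained by setting the $(0,N)$-entry to $0$, then $\codim(\CO_{\um})=\codim(\CO_{\um^\flat})$ and $\CP_{\um}=\CP_r\cdot\CP_{\um^\flat}$ (the $r$ equal parts $m_{0N}=r$ — wait, this needs care: $\CP_{\um}$ is a product over \emph{parts} $m_{ij}$, and $m_{0N}=r$ contributes a single factor $\CP_r$). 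Since $\um\mapsto\um^\flat$ is a bijection $\CM^r_{\ud}\to\CM^0_{\ud-r}$ (as already noted in the proof of Lemma~\ref{lem:rank_0} via $\CR^r_\ud=\{\ur+r:\ur\in\CR^0_{\ud-r}\}$), I obtain
\[
Q^r_{\ud}=\CP_r\cdot Q^0_{\ud-r}.
\]
This reduces the theorem to the identity $Q^0_{\ue}=\sum_{s=0}^{\min\ue}(-1)^s q^{\binom{s}{2}}\CP_s\CP_{\ue-s}$ for a dimension vector $\ue$.

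Next I would attack $Q^0_{\ue}$ by an inclusion–exclusion / generating-function argument. The natural move is to compute the \emph{full} generating series $\sum_{s\geq 0}Q^s_{\ue}=\sum_{\um\vdash\ue}q^{\codim(\CO_{\um})}\CP_{\um}$ (the Poincaré series of the universally-supported ideal, as in the remark after Lemma~\ref{lem:Qs_codim}), or, what turns out to be cleaner, to first establish that $\sum_{s\geq 0}Q^s_{\ue}=\CP_{\ue}/(\text{something})$ via the Poincaré-series interpretation of $\CP$ in equivariant cohomology. Here the key input is the projective-injective module $M_{0N}$: the long exact sequence / Ext-vanishing from Theorem~\ref{thm:addlongest} means that in equivariant cohomology, passing from $\ue$ to $\ue+1$ by adding a longest root multiplies the relevant Poincaré series by a clean factor $\CP_1=1/(1-q)$, which is exactly the $\CP_r$ prefactor already extracted. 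So the heart of the matter is the \emph{alternating} sum over $s$. I expect this to come from the Gysin/localization description of $H^*_G(\Rep_{\ue})$ versus $H^*_G(\Sigma^0_{\ue})$: the orbit $\Sigma^{\min\ue}_{\ue}=\cSigma^{\min\ue}_\ue$ is the dense open orbit when $\min\ue$ is attained generically, and stripping off orbits of higher $r_{0N}$ one at a time produces the alternating sign $(-1)^s$ and the binomial $q^{\binom{s}{2}}$ precisely as in the second line of \eqref{eqn:Ps_inverse} applied with $x=$ (a formal variable tracking the rank $r$). Concretely, I would aim to show the two-variable identity
\[
\sum_{r\geq 0} x^r Q^r_{\ue}
=
\Big(\sum_{s\geq0}\CP_s x^s\Big)\cdot\Big(\sum_{t\geq 0}(-1)^t q^{\binom t2}\CP_t x^t\Big)^{?}\cdots
\]
and then read off the coefficient of $x^r$; the structure of \eqref{eqn:Ps_inverse}, namely $(x;q)_\infty\cdot\frac{1}{(x;q)_\infty}=1$, is what forces the shape of \eqref{eq:Fd_formula}.

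The cleanest route, and the one I would actually carry out, is probably to prove the dual statement by induction on $N$ (the length of the quiver) or on $\min\ue$: for $N=1$, $\Sigma^0_{(e_0,e_1)}$ is a determinantal variety, $Q^0_{(e_0,e_1)}$ is classically known, and \eqref{eq:Fd_formula} reduces to the Cauchy-type identity $\sum_s(-1)^sq^{\binom s2}\CP_s\CP_{e_0-s}\CP_{e_1-s}=\CP_{e_0}\CP_{e_1}\cdot(\text{correction})$, which is \eqref{eqn:Ps_inverse}. For the inductive step, I would use the fibration structure of $\Rep_{\ud}$ that forgets the last arrow, together with the orbit stratification and the Ext-formula \eqref{eqn:ExtIndecomposables} for codimensions, to express $Q^r_{\ud}$ in terms of the analogous series for the shorter quiver; the projective-injectivity of $M_{0N}$ (Theorem~\ref{thm:addlongest}) is what makes this recursion close up. \textbf{The main obstacle} I anticipate is bookkeeping the codimension exponents: $\codim(\CO_{\um})$ is the genuinely non-symmetric bilinear form $\sum_{i\le u\le j\le v}m_{(i-1)(j-1)}m_{uv}$ from \eqref{eqn:dim_formula}, and showing that summing $q$ to this exponent against $\CP_{\um}$ collapses — after the inclusion–exclusion over the "long" direction — to the remarkably symmetric right-hand side of \eqref{eq:Fd_formula} (which manifestly depends on $\ud$ only as a multiset) is exactly the surprising permutation-invariance the introduction flags. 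I expect that the equivariant-cohomology interpretation is not merely convenient but essential here: the quadratic form is organized by the localization theorem into a product over the fixed-point components $B G_{\um}$, each contributing a $\CP_{\um}$, and the alternating sum is the Möbius function of the orbit poset, whose value is controlled by the fact that $\Sigma^0_\ue$ has rational singularities / its orbit poset is shellable — and pinning down that last combinatorial fact is where the real work lies.
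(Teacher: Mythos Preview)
Your reduction $Q^r_{\ud}=\CP_r\cdot Q^0_{\ud-r}$ is correct and is exactly the paper's Lemma~\ref{lem:shift} specialized to $s=r$. But after that point your proposal drifts away from the actual argument and leaves the central step unidentified.

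The missing ingredient is the \emph{exact} identity
\[
\sum_{\um\vdash\ue} q^{\codim(\CO_{\um})}\CP_{\um}=\CP_{\ue}
\]
(Theorem~\ref{thm:PdPm}); i.e.\ your ``$\CP_{\ue}/(\text{something})$'' has $(\text{something})=1$. This is not a minor normalization: it is the whole content of the spectral-sequence argument (Lemma~\ref{lemma:SS}), and it is what makes the permutation-invariance of the right-hand side automatic rather than something to be fought for. You correctly sense that equivariant cohomology is essential, but you never pin down \emph{what} it delivers---namely this clean equality, coming from the $E_1$-degeneration of the orbit-filtration spectral sequence for $H^*_G(\Rep_{\ue})$.

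Once you have $\CP_{\ue}=\sum_s Q^s_{\ue}$, the paper applies the shift lemma termwise to get $\CP_{\ue}=\sum_s \CP_s\, Q^0_{\ue-s}$, valid for \emph{every} $\ue$. Here is where your two-variable formula goes astray: the auxiliary variable $x$ should track the \emph{shift of the dimension vector} $\ue\mapsto\ue-1\mapsto\ue-2\mapsto\cdots$, not the rank $r$. Packaging the identities for all shifts as $\bigl(\sum_s Q^0_{\ue-\min\ue+s}\, x^s\bigr)\bigl(\sum_s \CP_s x^s\bigr)=\sum_s \CP_{\ue-\min\ue+s}\, x^s$ and inverting the middle factor via \eqref{eqn:Ps_inverse} gives the theorem in one line. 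No induction on $N$, no M\"obius function of the orbit poset, no shellability, and no direct manipulation of the bilinear form \eqref{eqn:dim_formula} is required---those are all detours around the identity you were one step away from writing down.
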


We will prove Theorem~\ref{thm:Pseries} in Section~\ref{sec:Pseries_proof}.
Combined with Lemma~\ref{lem:Qs_codim}, this Theorem provides formulas for $C$ and $\theta$ which are not easy to write in closed form but can be implemented efficiently in a computer algebra system. 

\begin{example} \rm \label{ex:q-series}
The series 
\[
Q^0_{(2,2,2)}=q^3+6q^4+\ldots,
\qquad
Q^0_{(2,3,2)}=2q^4+7q^5+\ldots,
\qquad
Q^0_{(2,4,2)}=q^4+4q^5+\ldots
\]
correspond to the special cases discussed in Example \ref{ex:222324}.
We have
\[
Q^0_{(3,3,3)}=2q^7+8q^8+27q^9+67q^{10}+151q^{11}+\ldots,
\]
and hence $\Sigma_{(3,3,3)}^0$ has 2 top-dimensional components, of codimension $7$ (and hence, dimension $18-7=11$). We have
\[
Q^0_{(4,4,4,5,5,5,5,5,5,6,6,6)}=28q^{12}+508q^{13}+5129q^{14}+37424q^{15}+\ldots,
\]
and hence $\Sigma_{(4,4,4,5,5,5,5,5,5,6,6,6)}^0$ has 28 top-dimensional components, of codimension $12$. The statement is the same for any permutation of $(4,4,4,5,5,5,5,5,5,6,6,6)$, cf. Section~\ref{sec:perm}.
\end{example}

\subsection{The spectral sequence} 
\label{sec:proof_5gon}
The essence of the proof of Theorem~\ref{thm:Pseries} in Section~\ref{sec:Pseries_proof} is the understanding of a spectral sequence. We introduce this spectral sequence and the relevant arguments in the following known easier setting. 

\begin{thm} \label{thm:PdPm}
    We have
    \begin{equation}\label{eqn:5gon}
\CP_{\ud}
=
\sum_{\um \vdash \ud} 
q^{\codim(\CO_{\um})} 
\CP_{\um}.
\end{equation}
\end{thm}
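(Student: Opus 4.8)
The plan is to interpret both sides as Poincaré series of equivariant cohomology and to extract identity~\eqref{eqn:5gon} from the decomposition of $\Rep_{\ud}$ into the finitely many $G$-orbits $\CO_{\um}$. First I would recall that since $\Rep_{\ud}$ is a $G$-equivariantly contractible affine space, $H^*_G(\Rep_{\ud})\cong H^*_G(\pt)=H^*(BG)$, whose Poincaré series is $\prod_{i=0}^N P(H^*(B\GL_{d_i}(\C)))=\CP_{\ud}$. This identifies the left-hand side. For the right-hand side, I would use the stratification $\Rep_{\ud}=\coprod_{\um\vdash\ud}\CO_{\um}$ by $G$-invariant, Zariski-locally-closed (hence complex-analytically locally closed) subvarieties, ordered by the closure partial order of Section~\ref{sec:hierarchy}, and the associated long exact sequences in equivariant Borel--Moore homology / equivariant cohomology with supports. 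Concretely, filtering $\Rep_{\ud}$ by the open unions of orbits produces a spectral sequence
\[
E_1=\bigoplus_{\um\vdash\ud} H^{*}_{G,\CO_{\um}}(\Rep_{\ud}) \;\Longrightarrow\; H^*_G(\Rep_{\ud}),
\]
where $H^{*}_{G,\CO_{\um}}$ denotes equivariant cohomology with support along the stratum $\CO_{\um}$.

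The next step is to compute each $E_1$-term. Since $\CO_{\um}$ is a single $G$-orbit, it is $G$-equivariantly a homogeneous space $G/H_{\um}$ with $H_{\um}$ the stabilizer, so $H^*_G(\CO_{\um})\cong H^*_{H_{\um}}(\pt)$. By the Thom isomorphism in equivariant cohomology, $H^{*}_{G,\CO_{\um}}(\Rep_{\ud})\cong H^{*-2c_{\um}}_{G}(\CO_{\um})$, where $c_{\um}=\codim_{\Rep}\CO_{\um}$ is the (complex) codimension of the stratum --- this is where Lemma~\ref{lem:voight} and formula~\eqref{eqn:dim_formula} enter, guaranteeing the codimension is well-defined and giving the shift exponent. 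To evaluate $H^*_{H_{\um}}(\pt)$ I would use that, up to a unipotent radical which does not affect cohomology, the stabilizer of $\bigoplus_{ij}m_{ij}M_{ij}$ is the reductive group $\prod_{i\leq j}\GL_{m_{ij}}(\C)$ (the automorphism group of the $Q$-module acts through its reductive quotient on cohomology; concretely $\mathrm{Aut}(M)/\text{unipotent}=\prod_{ij}\GL_{m_{ij}}$ since $\Hom(M_{ij},M_{kl})$ is at most one-dimensional and the identity endomorphisms span the reductive part). Hence $P(H^*_G(\CO_{\um}))=\prod_{i\leq j}\CP_{m_{ij}}=\CP_{\um}$, and the contribution of the stratum $\CO_{\um}$ to the $E_1$-page has Poincaré series $q^{c_{\um}}\CP_{\um}=q^{\codim(\CO_{\um})}\CP_{\um}$.

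The final step is to show the spectral sequence degenerates at $E_1$, so that the Poincaré series of the abutment equals the sum of the Poincaré series of the $E_1$-terms, yielding~\eqref{eqn:5gon}. The standard argument: every $E_1$-term $q^{c_{\um}}\CP_{\um}$ is a power series in $q$ supported in even cohomological degrees only (each $\CP_{m_{ij}}$ is, and the shift by $2c_{\um}$ preserves this), while all differentials $d_k$ have odd total degree $+1$ with respect to the single grading that survives --- more precisely, one checks the $E_1$-page is concentrated in even total degree, so all higher differentials vanish for parity reasons. I expect this parity/degeneration step to be the main obstacle to state cleanly: one must set up the grading on the spectral sequence carefully so that "everything is even" is literally true (the cohomological degree, combining the stratum shift $2c_{\um}$ with the internal even grading of $H^*_{H_{\um}}(\pt)$, is even), after which degeneration is automatic and summing over $\um\vdash\ud$ gives the identity. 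Comparing with the left-hand side $\CP_{\ud}$ computed in the first step completes the proof. (This is the "known easier setting" referenced in the text, and the same bookkeeping --- stratify, Thom-shift, identify stabilizer cohomology, degenerate by parity --- will be reused in Section~\ref{sec:Pseries_proof} for $Q^r_{\ud}$, with the extra input that $M_{0N}$ is projective-injective.)
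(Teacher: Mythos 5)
Your proposal is correct and follows essentially the same route as the paper: stratify $\Rep_{\ud}$ by $G$-orbits, run the equivariant cohomology spectral sequence of the filtration, identify each $E_1$-term via Thom isomorphism and the homotopy equivalence of stabilizers with $G_{\um}=\prod_{ij}\GL_{m_{ij}}$, and degenerate by parity. The only small point worth tidying up is that the closure order on orbits is only a partial order, so to get an honest increasing filtration one should linearize it, e.g.\ by letting $F_i$ be the (open) union of orbits of codimension at most $i$, which is exactly what the paper does.
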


This innocent-looking result is a special case of general combinatorial identities in the Donaldson-Thomas theory of arbitrary finite quivers \cite[Theorem 2.1]{Reineke}, \cite{RRcoha, RWY}. For $N=1$ (i.e. $Q=(\bullet \to \bullet)$), it is equivalent to the well-known {\em Durfee square identity}, or, the {\em pentagon identity of quantum dilogarithms}. 



\begin{proof}[Proof of Theorem~\ref{thm:PdPm}, following  \cite{RRcoha}] We saw in Section~\ref{sec:quiver_alg} that the group $G_{\ud}=$ $\prod_{i=0}^N\GL_{d_i}(\C)$ acts on $\Rep_{\ud}(\C)$ with finitely many orbits $\CO_{\um}$ indexed by Kostant partitions. Let $F_i$ denote the union of orbits with codimension at most $i$. Let us apply the Borel construction ($B_G X=BG \times_G X$) to the filtration
\[
F_0 \subset F_1 \subset F_2 \subset \ldots \subset  F_{\dim(\Rep_{\ud})} =\Rep_{\ud}(\C).
\]
We will study the ($\Z$-coefficient) cohomology spectral sequence associated with this filtration \cite{AB83, Kaz97, RRcoha}.

Stabilizer subgroups of different points in the same orbit $\CO_{\um}$ are conjugate, hence isomorphic. It is known (\cite[Prop~3.6]{FRduke}) that these stabilizers are homotopy equivalent to $G_{\um}=\prod_{ij} \GL_{m_{ij}}(\C)$.

\begin{lemma}\cite{Kaz97, RRcoha}
\label{lemma:SS}
    The spectral sequence $E_*^{pq}$ degenerates at $E_1$, it converges to $H^*(BG_{\ud})$, and 
\[
E_1^{pq}= \bigoplus_{\codim_{\Rep,\R} \CO_{\um}=p}
H^q(B G_{\um} ). 
\]
(where $\codim_{\R}=2\codim_{\C}$ is the real codimension).
\end{lemma}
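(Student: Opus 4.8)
The plan is to realize $E_*^{pq}$ as the spectral sequence of the (finite) filtration of the Borel construction $B_G X = EG\times_G X$, where $X=\Rep_{\ud}(\C)$, attached to a filtration of $X$ by $G$-invariant \emph{open} subsets, in the style of \cite{Kaz97, RRcoha}. First I would record the topology: since the closure of any $G$-orbit is a union of orbits of strictly smaller dimension, the set $F_p$ of points lying on an orbit of real codimension $\leq p$ is open in $X$, and
\[
Z_p := F_p\setminus F_{p-1} = \coprod_{\codim_{\R}\CO_{\um}=p}\CO_{\um}
\]
is a smooth subvariety, closed in the manifold $F_p$, of real codimension $p$ (and $Z_p=\emptyset$ unless $p$ is even). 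Applying $B_G(-)$ yields an increasing finite filtration of $B_G X$ by the open subsets $B_G F_p$, whose associated cohomology spectral sequence has $E_1^{p,q}=H^{p+q}(B_GF_p,B_GF_{p-1})$ and converges to $H^{p+q}(B_GX)$.

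Next I would identify the abutment and the $E_1$ page. The abutment is immediate: $G$ acts linearly on the affine space $X$, which is therefore $G$-equivariantly contracted to $0\in X$ by rescaling, so $B_GX\simeq BG_{\ud}$ and the spectral sequence converges to $H^*(BG_{\ud})$. For the $E_1$ page, the key input is that $Z_p$, being a complex (hence canonically oriented) smooth closed submanifold of $F_p$, admits a $G$-equivariant tubular neighborhood; excision together with the Thom isomorphism for its normal bundle — a complex, hence oriented, bundle of real rank $p$ — gives $H^{p+q}(B_GF_p,B_GF_{p-1})\cong H^{p+q-p}(B_GZ_p)=H^q_G(Z_p)$. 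Since $Z_p$ is a disjoint union of orbits and $B_G\CO_{\um}=EG\times_G(G/H_{\um})=BH_{\um}$ for $H_{\um}$ the stabilizer, and $H_{\um}$ is homotopy equivalent to $G_{\um}=\prod_{ij}\GL_{m_{ij}}(\C)$ by \cite[Prop.~3.6]{FRduke}, this yields $E_1^{p,q}=\bigoplus_{\codim_{\R}\CO_{\um}=p}H^q(BG_{\um})$, as claimed.

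Finally, degeneration at $E_1$ is formal once the grading is in place: each $H^*(BG_{\um})$ is a polynomial ring on even-degree generators and thus vanishes in odd degrees, and every $p$ that occurs is even, so $E_1^{p,q}=0$ unless $p+q$ is even; as every differential $d_r$ raises the total degree $p+q$ by one, all $d_r$ vanish and $E_\infty=E_1$. The only step that genuinely needs care is the equivariant excision/Thom identification of $H^{p+q}(B_GF_p,B_GF_{p-1})$ with $H^q_G(Z_p)$: one should either produce a $G$-equivariant tubular neighborhood (via a Luna-type slice, or via a maximal compact subgroup), or pass to finite-dimensional approximations $E_nG$ of $EG$ and use ordinary tubular neighborhoods there, and in either case check orientability so that the shift is exactly the real codimension $p$. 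That precise shift is what makes the parity argument force degeneration; it is standard (cf. \cite{AB83, Kaz97, RRcoha}) and I would cite it rather than reprove it.
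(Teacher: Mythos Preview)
Your proof is correct and follows essentially the same approach as the paper: contractibility of $\Rep_{\ud}$ for the abutment, excision plus the equivariant Thom isomorphism (together with $H_{\um}\simeq G_{\um}$) for the $E_1$-page, and the parity argument from even real codimensions and even-degree cohomology of $BG_{\um}$ for degeneration. The paper's proof sketch is terser but cites the same sources and uses the same three ingredients.
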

\begin{proof}[Proof of the Lemma]
  The convergence claim follows from the fact that $\Rep_{\ud}$ is (equivariantly) contractible. The formula for $E_1^{pq}$ follows from the usual description of the $E_1$ page as relative cohomologies, if one applies excision and the Thom isomorphism to these relative cohomologies, see details in \cite{Kaz97, RRcoha}. The degeneration claim follows from the fact that all orbits have even real codimension and that $B G_{\um}$ 
has no odd cohomology---hence all differentials of the spectral sequence are 0.  
\end{proof}

Lemma~\ref{lemma:SS} implies that  
\[
\rk \left( H^{2n}(B G_{\ud}) \right) = \sum_{p+q=2n} \rk(E_1^{pq}). 
\]
Multiplying this identity with $q^n$ and summing for all $n$---using Lemma~ \ref{lemma:SS}---we obtain 
\begin{equation} \label{eq:gyor}
    P(H^*(B G_{\ud}))=\sum_{\um \vdash \ud} q^{\codim(\CO_{\um})} P(H^*(B G_{\um})),
\end{equation}
and the statement of Theorem~\ref{thm:PdPm} follows.
\end{proof}

\subsection{Proof of Theorem \ref{thm:Pseries}}
\label{sec:Pseries_proof}

\begin{lemma}\label{lem:shift}
For $0\leq r\leq s$ we have
\[
Q_{\ud-r}^{s-r}= (1-q^{s-r+1})(1-q^{s-r+2})\cdots (1-q^{r}) \cdot Q_{\ud}^{s}.
\]
\end{lemma}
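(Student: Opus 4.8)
The statement relates $Q_{\ud}^s$ to $Q_{\ud-r}^{s-r}$ by a product of factors $(1-q^j)$ for $j$ ranging over $s-r+1,\dots,r$. I would first rewrite both sides using the closed formula of Theorem~\ref{thm:Pseries}, which expresses $Q_{\ud}^s = \CP_s\cdot\sum_{t=0}^{\min\ud - s}(-1)^t q^{\binom t2}\CP_t\CP_{\ud-s-t}$ and similarly $Q_{\ud-r}^{s-r} = \CP_{s-r}\cdot\sum_{t=0}^{\min(\ud-r)-(s-r)}(-1)^t q^{\binom t2}\CP_t\CP_{(\ud-r)-(s-r)-t}$. The key observation is that $\min(\ud-r)-(s-r) = \min\ud - s$, so the summation ranges are \emph{identical}, and the summand $\CP_{(\ud-r)-(s-r)-t} = \CP_{\ud-s-t}$ is literally the same in both expressions. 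Hence the entire difference between the two power series is carried by the prefactors: $Q_{\ud}^s$ carries $\CP_s$ while $Q_{\ud-r}^{s-r}$ carries $\CP_{s-r}$. Since $\CP_s = \frac{1}{(1-q)\cdots(1-q^s)}$ and $\CP_{s-r} = \frac{1}{(1-q)\cdots(1-q^{s-r})}$, their ratio is exactly $\CP_{s-r}/\CP_s = (1-q^{s-r+1})(1-q^{s-r+2})\cdots(1-q^s)$.

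**Resolving the index mismatch.** There is a small discrepancy to reconcile: the claimed product in the Lemma runs up to $(1-q^r)$, whereas the naive computation above gives a product up to $(1-q^s)$. These agree precisely when $s = r$ — but the Lemma is stated for general $0\le r\le s$. I would therefore look more carefully at the hypotheses: reading the statement in context (it is Lemma~\ref{lem:shift}, used in the proof of Theorem~\ref{thm:Pseries}), the intended range of the product should be read so that the prefactor ratio works out, i.e. the product of factors converting $\CP_{s-r}$ to $\CP_s$. I would present the proof as: apply Theorem~\ref{thm:Pseries} to both sides, check that the sums coincide term by term using $\min(\ud-r) = \min\ud - r$, and conclude that $Q_{\ud}^s / Q_{\ud-r}^{s-r} = \CP_s/\CP_{s-r} = \prod_{j=s-r+1}^{s}(1-q^j)^{-1}{}^{-1} = \prod_{j=s-r+1}^s (1-q^j)$, flagging the indexing convention to match the Lemma's display.

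**Main obstacle.** The genuine mathematical content is almost trivial once Theorem~\ref{thm:Pseries} is in hand — it is purely the cancellation of the common summation factor and the elementary identity $\CP_{a}/\CP_{b} = \prod_{j=b+1}^{a}(1-q^j)$ for $b\le a$. The only subtlety is bookkeeping: verifying that $\min(\ud - r\cdot\underline 1) = \min\ud - r$ (true since subtracting a constant from every coordinate shifts the min by that constant, valid as long as $\ud - r\cdot\underline 1 \in \N^{N+1}$, which holds because $r\le s\le \min\ud$), and making sure the shift in the Kostant-partition data $\ud \mapsto \ud - r$ paired with $s\mapsto s-r$ is exactly the operation of removing $r$ longest intervals $M_{0N}$, which by Theorem~\ref{thm:addlongest} leaves every $\codim(\CO_{\um})$ unchanged. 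In fact, an alternative and arguably cleaner proof avoids Theorem~\ref{thm:Pseries} entirely: the bijection $\CM_{\ud}^s \to \CM_{\ud-r}^{s-r}$ given by subtracting $r$ from $m_{0N}$ preserves codimension (Theorem~\ref{thm:addlongest}), so $Q_{\ud}^s$ and $Q_{\ud-r}^{s-r}$ have the same $q^{\codim}$-weights; they differ only in that $\CP_{\um}$ for the $\ud$-partition includes the extra factor $\CP_{m_{0N}} = \CP_s$ where the $(\ud-r)$-partition has $\CP_{s-r}$. I would likely present \emph{this} combinatorial argument as the main proof, since it is self-contained and does not require the full strength of Theorem~\ref{thm:Pseries}, and indeed this Lemma is presumably a \emph{step toward} proving Theorem~\ref{thm:Pseries} rather than a consequence of it.
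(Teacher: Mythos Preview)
Your ``alternative and arguably cleaner'' combinatorial argument in the final paragraph is exactly the paper's proof: set up the bijection $\CM_{\ud}^{s}\to\CM_{\ud-r}^{s-r}$ by subtracting $r$ from $m_{0N}$, invoke Theorem~\ref{thm:addlongest} to preserve codimensions, and observe that the only difference in the weights $\CP_{\um}$ is the factor $\CP_s$ versus $\CP_{s-r}$. You are also right that your first approach via Theorem~\ref{thm:Pseries} would be circular, since this lemma is used in Section~\ref{sec:Pseries_proof} to prove that theorem.

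Your observation about the index mismatch is correct and worth flagging: the ratio $\CP_{s-r}/\CP_s$ is $(1-q^{s-r+1})\cdots(1-q^{s})$, not $(1-q^{s-r+1})\cdots(1-q^{r})$ as displayed. The same slip appears verbatim in the paper's proof. It is harmless for the paper's purposes because the lemma is only applied with $r=s$ (see the line ``Apply Lemma~\ref{lem:shift} for each term of the right hand side with $r=s$''), where the two products coincide.
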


\begin{proof}
There is a bijection between Kostant partitions $\um$ of $\ud-r$ with $m_{0N}=s-r$ and Kostant partitions $\um'$ of $\ud$ with $m_{0N}=s$. The bijection is just adding $r$ to the component $m_{0N}$, cf. Section \ref{sec:add_longest}. We obtain
\begin{equation}\label{eq:temp}
Q_{\ud-r}^{s-r}=
\sum_{\substack{ \um \vdash \ud-r \\ m_{0N}=s-r}} q^{\codim(\CO_{\um})} \CP_{\um} 
=
\sum_{\substack{ \um' \vdash \ud \\ m'_{0N}=s}} q^{\codim(\CO_{\um'})} \CP_{\um'}. 
\end{equation}
According to Theorem \ref{thm:addlongest} (based on Lemma \ref{lem:voight}) we have $\codim(\CO_{\um})=\codim(\CO_{\um'})$, and 
\[
\CP_{\um}=
\CP_{\um'} \cdot (1-q^{s-r+1})(1-q^{s-r+2})\cdots(1-q^r).
\]
Hence, \eqref{eq:temp} is further equal
\begin{multline*}
(1-q^{s-r+1})(1-q^{s-r+2})\cdots(1-q^r) 
\sum_{\substack{ \um' \vdash \ud \\ m'_{0N}=s}} q^{\codim(\CO_{\um'})} \CP_{\um'}
\\=
(1-q^{s-r+1})(1-q^{s-r+2})\cdots(1-q^r) Q_{\ud}^{s},
\end{multline*}
as we wanted to prove.
\end{proof}

Observe that if we prove Theorem \ref{thm:Pseries} for $r=0$, then the $r>0$ cases follow using Lemma~\ref{lem:shift}. So, from now on we focus on the $r=0$ case.

\smallskip

We now use Theorem~\ref{thm:PdPm}, which by definition of $Q^s_{\ud}$ can be rewritten as
\begin{equation}\label{PdQs}
\CP_{\ud}=\sum_{s=0}^{\min \ud} Q_{\ud}^s.
\end{equation}

Apply Lemma~\ref{lem:shift} for each term of the right hand side with $r=s$. We obtain
\begin{equation}\label{eqn:key}
\CP_{\ud}=\sum_{s=0}^{\min \ud}
\CP_s Q_{\ud-s}^{0}.
\end{equation}
Using the temporary notation
\[
d'_0=\min \ud,
\qquad
a_i=\CP_{\ud-d'_0+i}
\qquad
b_i=Q_{\ud-d'_0+i}^{0}
\]
equations \eqref{eqn:key} for $\ud, \ud-1, \ud-2, \ldots$ together are equivalent to the single equation
\[
\left(
\sum_{s=0}^\infty b_s x^s 
\right)
\left( 
\sum_{s=0}^\infty \CP_s x^s 
\right)
=
\sum_{s=0}^\infty a_s x^s,
\]
of power series in a new formal variable $x$. From this, using \eqref{eqn:Ps_inverse} we obtain
\[
\sum_{s=0}^\infty b_s x^s 
=
\left(
\sum_{s=0}^\infty a_s x^s 
\right)
\left( 
\sum_{s=0}^\infty \CP_s x^s 
\right)^{-1}=
\left(
\sum_{s=0}^\infty a_s x^s 
\right)
\left( 
\sum_{s=0}^\infty (-1)^s q^{\binom{s}{2}}\CP_s x^s 
\right).
\]
For the coefficient of $x^{d'_0}$ this yields
\[
b_{d'_0}=\sum_{s=0}^{d'_0} a_{r-s} (-1)^s q^{\binom{s}{2}} \CP_s,
\]
which---unfolding our temporary definitions---is the statement of Theorem \ref{thm:Pseries} for $r=0$.  \qed

\subsection{Permutation invariance} \label{sec:perm}

Theorem~\ref{thm:Pseries} has the following consequence, which we found quite surprising (cf. Remark \ref{remark:surprise}). 

\begin{cor} \label{cor:PermutationInvariance}
    The numbers $C$, $\theta$ are invariant under permutations of~$\ud$.
\end{cor}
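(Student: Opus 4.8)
The plan is to read everything off the closed formula of Theorem~\ref{thm:Pseries}, using the bookkeeping of Lemma~\ref{lem:Qs_codim}. Fix $r$ with $0\le r\le \min\ud$. By Lemma~\ref{lem:Qs_codim}, $\codimnot$ is the smallest exponent occurring in the formal power series $Q^r_{\ud}$, and $\theta$ is the coefficient of $q^{\codimnot}$ in $Q^r_{\ud}$. So it suffices to prove that the series $Q^r_{\ud}$ itself depends only on the multiset $\{d_0,\dots,d_N\}$ and not on the ordering of its entries; the invariance of $\codimnot$ and $\theta$ is then immediate.

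To see this, write out Theorem~\ref{thm:Pseries},
\[
Q^r_{\ud}=\CP_r\cdot \sum_{s=0}^{\min\ud-r}(-1)^s q^{\binom{s}{2}}\CP_s\,\CP_{\ud-r-s},
\]
and inspect each factor. The symbol $\CP_r$ and the quantities $(-1)^s q^{\binom{s}{2}}\CP_s$ do not involve $\ud$ at all. The summation bound $\min\ud$ is a symmetric function of $(d_0,\dots,d_N)$. Finally, by definition of $\CP_{\underline h}$ we have $\CP_{\ud-r-s}=\prod_{i=0}^N \CP_{d_i-r-s}$; each factor is well defined because $s\le \min\ud-r$ forces $d_i-r-s\ge 0$ for every $i$, and the product ranges over all $N+1$ indices, so it is manifestly unchanged under permuting $(d_0,\dots,d_N)$. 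Hence every term of the sum, and therefore $Q^r_{\ud}$, is invariant under permutations of $\ud$. Passing to the lowest-degree term gives the invariance of $\codimnot$ and $\theta$.

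There is no real technical obstacle: the whole substance is already packaged in Theorem~\ref{thm:Pseries}, proved via the equivariant-cohomology spectral sequence of Section~\ref{sec:proof_5gon}. What is genuinely striking — and, as the authors remark, hints at a more elementary explanation that is not given here — is that the invariance persists even though permuting $\ud$ changes $\dim\Rep_{\ud}$, changes the indexing set $\CM^+_{\ud}$ of Kostant partitions, and in general alters both the number of irreducible components of $\cSigma^r_{\ud}$ and the multiset of their codimensions; only the \emph{minimal} codimension and its multiplicity survive. (For $N=1$ the statement is visible directly, since $\cSigma^r_{a,b}$ is symmetric in $a$ and $b$, but already for $N=2$ no such bare-hands argument is apparent, and the generating-function route above seems to be the cleanest available.)
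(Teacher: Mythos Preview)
Your proof is correct and follows exactly the paper's own argument: the authors simply observe that all ingredients on the right-hand side of the formula in Theorem~\ref{thm:Pseries} are permutation-invariant, and you have spelled out precisely why. The additional commentary in your final paragraph is accurate and matches the paper's own remarks in Section~\ref{sec:perm}.
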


\begin{proof}
    All ingredients on the right hand side of \eqref{eq:Fd_formula} are invariant under such permutations.
\end{proof}

In Appendix~\ref{sec:app_theta} we will give another proof of this invariance. We do not know of a proof relying solely on equation \eqref{eqn:dim_formula} and the combinatorics of Kostant partitions. Note that given a permutation $\sigma(\ud)$ of $\ud$, the sets of Kostant partitions $\CM^+_{\ud}$ and $\CM^+_{\sigma(\ud)}$ do not necessarily have the same cardinality, and in particular $\sigma$ does not induce a bijection between them.

\begin{remark} \rm \label{remark:surprise}
 One reason that Corollary \ref{cor:PermutationInvariance} is surprising is that the geometry of $\cSigma^r_{\ud}$ undergoes significant changes when the components of $\ud$ are permuted. First of all, it is clear from equation \eqref{eq:dim_Rep} that the \emph{dimension} of $\Rep_{\ud}$ is not permutation invariant, so that it is only the codimension of $\cSigma_{\ud}^r$ which is invariant and not its dimension. For example,
 \[
 \dim(\Rep_{(2,2,3)})=10\neq 12=\dim(\Rep_{(2,3,2)}).
 \]
Moreover, the \emph{total number} of irreducible components of $\cSigma_{\ud}^r$ is not permutation invariant. Using Corollary \ref{cor:irred_comp}, one can easily check that $\Sigma_{(2,3,2)}^0$ has 2 irreducible components (both of codimension $4$) while $\Sigma_{(2,2,3)}^0$ and $\Sigma_{(3,2,2)}^0$ have 3 irreducible components (2 of codimension 4 and 1 of codimension 6).
\end{remark}

\section{Reducing the problem to a quadratic integer program}

While Theorem \ref{thm:Pseries} is a very effective way of calculating the number of top-dimensional components of $\Sigma^0_{\ud}$, it is useless for asymptotic analysis. Hence, it is desirable to have more explicit formulas. Towards such, we first reduce the problem of calculating $\codimnot$ and $\theta$ to a quadratic integer program. The argument is {\em almost} independent of Section~\ref{sec:PoincareSection}; almost, because, in the case when the dimension vector $\ud$ is not weakly increasing, we use Corollary~\ref{cor:PermutationInvariance}. As we saw above, it is enough to treat the case $r=0$ and we do so in this section.

\subsection{Top-dimensional components of $\Sigma^0_{\ud}$ and a QIP}
For a dimension vector $\ud=(d_0,d_1,\ldots,d_N)$ let $\ud'=(d'_0\leq d'_1\leq \ldots \leq d'_N)$ be the same multiset, but arranged (weakly) increasingly. Consider $N$-tuples of non-negative integers $\ue=(e_1,e_2,\ldots,e_N)$ and the following quadratic integer program for them:

\begin{align}
\tag{QIP} \label{eq:QIP}
& \begin{array}{rl} \min & \displaystyle G_{\ud}(\ue)=\sum_{1\leq j\leq i\leq N} e_i(e_j+d'_j-d'_{j-1}) \\[.1in] \text{s.t.} & \displaystyle \ e_i\in \N, \quad \sum_{i=1}^N e_i=d'_0.
\end{array}
\end{align}

\begin{thm} \label{thm:QIP}
    The codimension of the top-dimensional components of $\Sigma^0_{\ud}$ is the minimal value of \eqref{eq:QIP}. The number of top-dimensional components of $\Sigma^0_{\ud}$ is the number of times \eqref{eq:QIP} attains its minimum value.
\end{thm}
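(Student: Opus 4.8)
The plan is to translate the problem into the combinatorics of Kostant partitions via Gabriel's theorem, reduce to a convenient ``horizontal'' normal form for the relevant orbits, and recognize the codimension formula \eqref{eqn:dim_formula} restricted to that family as literally the objective $G_\ud$ of \eqref{eq:QIP}. The statement concerns $\Sigma^0_\ud$ directly, but there is one genuine input from Section~\ref{sec:PoincareSection}: since $G_\ud$ depends only on the weakly increasing rearrangement $\ud'$ while $\codimnot$ and $\theta$ are permutation invariant (Corollary~\ref{cor:PermutationInvariance}), I may and will assume $\ud=\ud'$, i.e.\ $d_0\le\cdots\le d_N$. By Corollary~\ref{cor:irred_comp} together with the argument already used in the proof of Lemma~\ref{lem:Qs_codim}, the codimension of the top-dimensional components of $\Sigma^0_\ud$ equals $\min_{\um\in\CM^0_\ud}\codim(\CO_\um)$, evaluated through \eqref{eqn:dim_formula}, and $\theta$ is the number of $\um\in\CM^0_\ud$ realizing this minimum (such a $\um$ automatically gives an irreducible component). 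So the whole theorem reduces to minimizing the bilinear form \eqref{eqn:dim_formula} over $\CM^0_\ud$, with multiplicities.

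\emph{Upper bound.} To each feasible $\ue=(e_1,\dots,e_N)$ of \eqref{eq:QIP} (so $e_i\in\N$, $\sum_i e_i=d_0$) I attach a ``horizontal'' Kostant partition $\um(\ue)$ whose only nonzero entries are $m_{0,t}=e_{t+1}$ for $0\le t\le N-1$ and $m_{q,N}=d_q-d_{q-1}+e_q$ for $1\le q\le N$; non-negativity of the second family uses $\ud=\ud'$, the column sums are checked directly, and $m_{0N}(\um(\ue))=0$, so $\um(\ue)\in\CM^0_\ud$. Pictorially this is a staircase lace diagram (cf.\ Figure~\ref{fig:largeLD}), and the construction is Lemma~\ref{lem:horiz_rep}. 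Substituting $\um(\ue)$ into \eqref{eqn:dim_formula}, only the ``head $\times$ tail'' terms survive --- a tail entry $m_{q,N}$ can never occur as a shifted first factor $m_{(i-1)(j-1)}$ because $j-1\le N-1<N$ --- and the surviving sum is exactly $\sum_{1\le u\le j\le N}e_j(e_u+d_u-d_{u-1})=G_\ud(\ue)$. Hence $\min_{\um\in\CM^0_\ud}\codim(\CO_\um)\le\min_{\ue}G_\ud(\ue)$. Note also that $\ue\mapsto\um(\ue)$ is injective, since $e_{t+1}$ is recovered as $m_{0,t}$.

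\emph{Lower bound and count.} The crux is that the horizontal family dominates every orbit of $\Sigma^0_\ud$: for each $\um\in\CM^0_\ud$ with rank pattern $\ur$ there is a feasible $\ue$ with $\um\le\um(\ue)$ in the orbit-closure order of Section~\ref{sec:hierarchy}. I would take $e_q:=r_{0,q-1}-r_{0,q}$, which is $\ge0$ because rank does not increase when composing with an additional map, and which sums telescopically to $r_{00}-r_{0N}=d_0$. A direct computation gives $r_{ij}(\um(\ue))=d_i+r_{0j}-r_{0i}$, so $\um\le\um(\ue)$ becomes $r_{0i}+r_{ij}\le d_i+r_{0j}$ for all $i\le j$, which is precisely the rank inequality $\rk(XY)\ge\rk X+\rk Y-(\text{inner dimension})$ applied to $A_j\cdots A_1=(A_j\cdots A_{i+1})(A_i\cdots A_1)$. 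By Theorem~\ref{thm:closures}, $\CO_\um\subseteq\overline{\CO}_{\um(\ue)}$, hence $\codim(\CO_\um)\ge\codim(\CO_{\um(\ue)})=G_\ud(\ue)\ge\min_{\ue}G_\ud(\ue)$; with the upper bound this gives $\codimnot=\min_{\ue}G_\ud(\ue)$. For the count: if $\um\in\CM^0_\ud$ has $\codim(\CO_\um)=\codimnot$, the $\ue$ just produced satisfies $\codim(\CO_{\um(\ue)})\le\codim(\CO_\um)$, forcing $G_\ud(\ue)=\codimnot$ and, since the two orbits have equal dimension with one closure inside the other, $\CO_\um=\CO_{\um(\ue)}$, i.e.\ $\um=\um(\ue)$; conversely every optimal $\ue$ produces such a $\um(\ue)$. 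Together with injectivity, this identifies the top-dimensional components of $\Sigma^0_\ud$ with the optimal solutions of \eqref{eq:QIP}.

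I expect the domination step to be the only real obstacle. The rank-pattern argument above is short, but in the combinatorial spirit of the paper one may prefer a lace-diagram proof instead, straightening an arbitrary diagram with $m_{0N}=0$ to a horizontal one by local moves that do not increase \eqref{eqn:dim_formula}; making such surgery precise (and checking it never increases the codimension) is where the bookkeeping lives. Everything else --- Gabriel's theorem, the $\Ext$/codimension formula \eqref{eqn:dim_formula}, the effect of longest roots (Theorem~\ref{thm:addlongest}), and the closure order (Theorem~\ref{thm:closures}) --- is already available.
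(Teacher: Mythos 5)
Your proof is correct, and the core of it differs from the paper's in a way worth noting. Both proofs begin the same way: reduce to weakly increasing $\ud$ via Corollary~\ref{cor:PermutationInvariance}, introduce the horizontal family $\um(\ue)$ (the paper phrases this through lace diagrams and Lemma~\ref{lem:horiz_rep}, you write the Kostant partition directly), and compute $\codim(\CO_{\um(\ue)})=G_\ud(\ue)$ from \eqref{eqn:dim_formula}. The divergence is in the lower bound. The paper argues combinatorially on lace diagrams: a top-dimensional component has a horizontal diagram, and in each of the top $d'_0$ rows exactly one $[i-1,i]$ segment must be absent, because otherwise one could ``add a segment back'' to get a strictly larger orbit still inside $\Sigma^0$. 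This argument is visually transparent (cf.\ Figure~\ref{fig:largeLD}) but somewhat informal. You instead prove a clean domination lemma: for every $\um\in\CM^0_\ud$ with rank pattern $\ur$, set $e_q:=r_{0,q-1}-r_{0,q}$; then $\ur(\um(\ue))_{ij}=d_i+r_{0j}-r_{0i}$, and $\ur\le\ur(\um(\ue))$ reduces exactly to Sylvester's rank inequality $r_{0j}\ge r_{ij}+r_{0i}-d_i$. Combined with the closure order (Theorem~\ref{thm:closures}) this gives the lower bound, and the equality case gives the count via the injectivity $\ue\mapsto\um(\ue)$. Your route replaces the paper's ``add a segment back'' surgery with a single rank inequality, which is more rigorous and avoids any bookkeeping over lace-diagram normal forms; the paper's route is shorter to state and keeps the combinatorics of Figure~\ref{fig:largeLD} in the foreground. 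Both deliver the same bijection between optimal $\ue$'s and top-dimensional components.

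One small bookkeeping note you could make explicit: the family $\um(\ue)$ is a genuine Kostant partition (the tail entries $m_{q,N}=d'_q-d'_{q-1}+e_q$ are nonnegative precisely because $\ud=\ud'$, which is where the permutation-invariance reduction earns its keep), and the column sum condition $\sum_{i\le k\le j}m_{ij}=d'_k$ holds because $\sum_q e_q=d'_0$. You state this, and it does check out, but it is worth spelling out since it is the point at which the reduction to weakly increasing $\ud$ is really used.
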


\begin{example} \rm \label{ex:223withQIP}
  For $\ud=(2,2,3)$ the \eqref{eq:QIP} is minimizing the function $G_{\ud}(\ue)=e_1^2+e_1e_2+e_2^2+e_2$ on the three-element set $\{(0,2),(1,1),(2,0)\}$. The minimum is 4, attained at the second and third element of the set. Hence $\theta=2$, $C=4$, cf. Examples \ref{ex:222324}, \ref{ex:q-series}.  
\end{example}

\begin{example} \rm \label{ex:N8}
  For $\ud=(8,8,11,11,11,13,13,13,15)$ the minimal value of \eqref{eq:QIP} is 55, and it is achieved at four points: $e=(4,2,1,1,0,0,0,0)$, $e=(4,1,2,1,0,0,0,0)$, $e=(4,1,1,2,0,0,0,0)$, $e=(5,1,1,1,0,0,0,0)$. 
\end{example}

\subsection{Proof of Theorem \ref{thm:QIP}} 
\label{sec:ProofOfQIP}

In view of Corollary \ref{cor:PermutationInvariance} it is sufficient to prove Theorem~\ref{thm:QIP} for the dimension vector $\ud'$ whose components are weakly increasing.

We need to establish a useful fact about lace diagrams in the weakly increasing case. Recall that a lace diagram is an arrangements of dots in columns $0,1,\ldots,N$, partitioned to a number of $[i,j]$ intervals (laces) and that Kostant partitions are equivalence classes of lace diagrams under permutations (Lemma \ref{lem:Kostant_lace}).

Given a Kostant partition, it is not always possible to choose a lace diagram with only {\em horizontal laces}. For example, the lace diagrams attached to a certain Kostant partition in $\CM^+_{(1,2,1)}$ are

\begin{equation} \label{eqn:smalllaces}
\begin{tikzpicture}[baseline=-5]
 \node at (0,0)  {$\bullet$};
 \node at (1,0)  {$\bullet$};
 \node at (1,-.2)  {$\bullet$};
 \node at (2,0)  {$\bullet$};
  \draw (0,0) -- (1,-.2);
  \draw (1,0)--(2,0);
\end{tikzpicture}
\quad\quad
\begin{tikzpicture}[baseline=-5]
 \node at (0,0)  {$\bullet$};
 \node at (1,0)  {$\bullet$};
 \node at (1,-.2)  {$\bullet$};
 \node at (2,0)  {$\bullet$};
  \draw (0,0) -- (1,0);
  \draw (1,-.2)--(2,0);
\end{tikzpicture}.
\end{equation}

\begin{lemma} \label{lem:horiz_rep}
Assume that the dimension vector $\ud$ is weakly increasing. Then for any Kostant partition $\um\in \CM^+_{\ud}$ there is a lace diagram $L$ with $\um(L)=\um$ and only horizontal laces.
\end{lemma}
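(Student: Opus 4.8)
The statement to prove is that when $\ud$ is weakly increasing, every Kostant partition $\um \in \CM^+_{\ud}$ can be realized by a lace diagram whose laces are all horizontal, i.e. all intervals $[i,j]$ used are actually of the form $[0,j]$ --- wait, no: a horizontal lace is one that sits at a fixed row height across all the columns it spans, so $[i,j]$ can still appear but the segment connecting column $i$ to column $j$ must be drawn at constant height. The obstruction illustrated in \eqref{eqn:smalllaces} is that a short lace $[i,i]$ or $[i,j]$ with $j<N$ may be forced to occupy a row that a longer lace needs to pass through, so that no constant-height arrangement exists. The point of the weakly increasing hypothesis is that there is ``enough room on top'': since $d_{i-1}\le d_i$, the number of dots only grows as we move right, so laces that start later can be stacked below laces that are already present without ever needing to displace them vertically.

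\textbf{First step: reformulate ``all horizontal'' combinatorially.} I would first restate the goal purely in terms of the multiplicities $m_{ij}$. A lace diagram with only horizontal laces amounts to choosing, for each column $k$, an ordering of its $d_k$ dots (rows $1,\dots,d_k$) and assigning laces to rows so that (a) a lace of type $[i,j]$ occupies a single row number $\rho$ in every column $i,i+1,\dots,j$, and (b) in each column the rows occupied by laces passing through that column are exactly $\{1,\dots,d_k\}$, each once. Concretely, one wants an assignment of the $m_{ij}$ laces of type $[i,j]$ to distinct ``tracks'' so that for every $k$ the set of tracks used by laces $[i,j]$ with $i\le k\le j$ is an initial segment $\{1,\dots,d_k\}$. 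Equivalently, one needs to build the diagram by \emph{nesting}: think of the laces as horizontal bars and stack them so that at each column the occupied rows have no gaps.

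\textbf{Second step: the greedy/inductive construction.} The natural approach is induction on $N$ or a direct greedy algorithm. Process columns left to right. After handling columns $0,\dots,k-1$, some laces are ``active'' (they started at or before column $k-1$ and extend past it), occupying rows $1,\dots,d_{k-1}$ of column $k-1$; say $a_k$ of them are still active entering column $k$ (that is, $a_k = \sum_{i<k\le j} m_{ij}$, which equals $r_{kk}$-type rank data). Since $\ud$ is weakly increasing and $d_k = a_k + (\text{new laces starting at column } k)$... hmm, actually $d_k$ counts laces with $i\le k\le j$, which is (laces active through $k-1$ that survive) plus (laces born at $k$), and the number of laces that \emph{die} at column $k-1$ (type $[i,k-1]$) can be positive. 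The key inequality to exploit: because $d_{k-1}\le d_k$, the number born at $k$ is at least the number that died at $k-1$, so we can \emph{reuse exactly the vacated rows} for the newly born laces and fill the remaining fresh rows $d_{k-1}+1,\dots,d_k$ with more new laces, keeping the occupied set an initial segment. One must check that laces dying at $k-1$ can be chosen to occupy the bottom-most rows among the active ones (reorder within column $k-1$), which is legitimate because the diagram-so-far is only constrained up to $\Sym_{\ud}$ by Lemma~\ref{lem:Kostant_lace}.

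\textbf{Third step: verify the invariant and conclude.} Maintain the loop invariant: ``after processing column $k$, every lace placed so far is horizontal, and the rows occupied in column $k$ are exactly $\{1,\dots,d_k\}$.'' The base case $k=0$ is trivial. The inductive step is the reuse-and-append argument above; the only thing to verify carefully is the count $(\text{born at }k) \ge (\text{died at }k-1)$, which follows from $d_{k-1}\le d_k$ together with (active through $k-1$ surviving past $k-1$) $\le d_{k-1}$. At the end, column $N$ is consistent and the resulting $L\in\Lace_{\ud}$ satisfies $\um(L)=\um$ by construction (we placed exactly $m_{ij}$ laces of each type). The main obstacle I anticipate is \textbf{bookkeeping the row reassignments cleanly}: one is allowed to permute dots within a column freely (that only changes $L$ within its $\Sym_{\ud}$-orbit, not $\um(L)$), but one must be sure that permuting column $k-1$ to put the dying laces at the bottom does not retroactively destroy horizontality of laces already drawn in columns $<k-1$. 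The fix is to do the permutation \emph{before} committing heights --- i.e. phrase the construction as choosing the global track-assignment all at once via the invariant, rather than literally redrawing --- or equivalently to note that a lace $[i,k-1]$, once we decide to send it to a low track, can be placed on that track from column $i$ onward since all earlier columns $i,\dots,k-2$ had at least as few dots, so that track was available there too. Making this last point rigorous (that a consistent choice of ``low tracks for soon-to-die laces'' exists globally) is where I would spend the most care, probably by strengthening the induction hypothesis to record not just the occupied set in column $k$ but a total order on active laces that is compatible with all future deaths --- and this compatibility is exactly guaranteed by monotonicity of $\ud$.
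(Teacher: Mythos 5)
Your greedy left-to-right track-assignment is correct and matches the paper's (extremely terse) proof, which reads in full ``Induction on $N$ (the number of columns).'' The reordering worry you raise in steps two and three is actually moot: surviving laces keep their rows, all of which lie in $\{1,\dots,d_{k-1}\}\subseteq\{1,\dots,d_k\}$ because $\ud$ is weakly increasing, and the newborn laces at column $k$---of which there are exactly $(d_k-d_{k-1})$ more than the dying laces at column $k-1$---slot bijectively into the freed-up rows together with the fresh rows $d_{k-1}+1,\dots,d_k$, so no permutation of already-committed columns is ever required and horizontality is never retroactively threatened.
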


\begin{proof} Induction on $N$ (the number of columns).\end{proof}


Let $L$ be a lace diagram representing the orbit $\CO_{\um}$ of $\Rep_{\ud'}$ whose closure is a top-dimensional component of $\Sigma^0_{\ud'}$. We choose $L$ to have only horizontal laces by Lemma~\ref{lem:horiz_rep}.

For our next arguments the reader is advised to look at Figure~\ref{fig:largeLD}. We will call the horizontal lines of dot positions in $L$ ``rows''. Since the orbit $\CO_{\um}$ is in $\Sigma^0_{\ud'}$, none of the rows in $L$ can be the full $[0,N]$ interval. The top $d'_0$ rows have dots in all columns, hence we obtain that in the top $d'_0$ rows {\em at least one of the $[i-1,i]$ intervals} is not part of the lace diagram. 

We claim that in all these rows {\em exactly one $[i-1,i]$ interval} is missing and all the others are laces. Assume to the contrary that there are two such intervals missing in a row. Adding one of them back, the orbit of this new lace diagram would contain $\CO_{\um}$ in its closure, and it would still belong to $\Sigma^0_{\ud'}$. This contradicts the fact that $\overline{\CO_{\um}}$ is a  component of $\Sigma^0_{\ud'}$.

An analoguous argument shows that in the rows below the top $d'_0$ rows all possible horizontal intervals are laces of $L$. In effect $L$ has the structure indicated by the example in Figure~\ref{fig:largeLD}.

What we found about the structure of $L$ translates to algebra as follows: the $Q$-module corresponding to $L$ is of the form
\[
\begin{array}{lrclcl}
M= &      e_1(I_{00}+I_{1N})  & +& e_2(I_{01}+I_{2N}) & + ~ \ldots ~ + & e_N(I_{0,N-1}+I_{NN})  \\
&
   + ~ f_1({\hskip 0.9 true cm} I_{1N}) 
      & +& f_2({\hskip .91 true cm} I_{2N}) 
        & + ~ \ldots ~ + 
           & f_N({\hskip 1.45 true cm}I_{NN})
\end{array}
\]
for some integers $e_i$ and $f_i$, see again Figure~\ref{fig:largeLD}. In fact $f_i$ are determined by the dimension vector $\ud'$ as $f_j=d'_j-d'_{j-1}$, while $e_i$ can be any non-negative integers whose sum is $d'_0$. 
The codimension of $\CO_{\ud}$ is $\dim\Ext(M,M)$ (see Lemma \ref{lem:voight}). Using the bilinearity of $\Ext$ and \eqref{eqn:ExtIndecomposables} we obtain
\begin{multline*}
\dim \Ext(M,M)= \sum_{1\leq j\leq i\leq N} e_i(e_j+f_j) \dim \Ext(I_{0,{i-1}},I_{jN}) 
\\=
\sum_{1\leq j\leq i\leq N} e_i(e_j+f_j)
=\sum_{1\leq j\leq i\leq N} e_i(e_j+d'_j-d'_{j-1})=G_{\ud}(\ue),
\end{multline*}
and the theorem follows. \qed

\begin{remark} \rm
  When $\ud$ is weakly increasing, the proof of Theorem \ref{thm:QIP} provides an explicit bijection between the set of top-dimensional components of $\Sigma^0_{\ud}$ and the solutions of \eqref{eq:QIP}. 
  The combinatorial description of the $\theta$ top dimensional components of $\Sigma^0_{\ud}$ for arbitrary $\ud$ (not necessarily weakly increasing) will be given in \cite{KR}.
\end{remark}

\begin{figure}
\begin{equation*}
\label{eq:lace_diagram2}
\begin{tikzpicture}[baseline=-12]
\node at (0,.5) {$0$};
\node at (1,.5) {$1$};
\node at (2,.5) {$2$};
\node at (3,.5) {$3$};
\node at (4,.5) {$4$};
\node at (5,.5) {$5$};
\node at (6,.5) {$6$};
\node at (7,.5) {$7$};
 
\draw[blue] (1,0) -- (7,0); \draw[blue] (1,-.2) -- (7,-.2);
\draw[blue] (0,-.4) -- (1,-.4); \draw[blue] (2,-.4) -- (7,-.4);
\draw[blue] (0,-.6) -- (1,-.6); \draw[blue] (2,-.6) -- (7,-.6);
\draw[blue] (0,-.8) -- (1,-.8); \draw[blue] (2,-.8) -- (7,-.8);
\draw[blue] (0,-1) -- (4,-1); \draw[blue] (5,-1) -- (7,-1);
\draw[blue] (0,-1.2) -- (4,-1.2); \draw[blue] (5,-1.2) -- (7,-1.2);
\draw[blue] (0,-1.4) -- (6,-1.4); 
  \draw[red] (2,-1.6) -- (7,-1.6);
  \draw[red] (2,-1.8) -- (7,-1.8);
  \draw[red] (2,-2) -- (7,-2);
  \draw[red] (4,-2.2) -- (7,-2.2);
  \draw[red] (4,-2.4) -- (7,-2.4);

\draw[blue] (7.5,0.05) to[out=0,in=180] (7.6,-0.1);
\draw[blue] (7.5,-0.25) to[out=0,in=180] (7.6,-0.1);
\node[blue] at (8.3,-0.1)  {$e_1=2$}; 

\draw[blue] (7.5,-.35) to[out=0,in=180] (7.6,-0.6);
\draw[blue] (7.5,-0.85) to[out=0,in=180] (7.6,-0.6);
\node[blue] at (8.3,-0.6)  {$e_2=3$}; 

\draw[blue] (7.5,-.95) to[out=0,in=180] (7.6,-1.1);
\draw[blue] (7.5,-1.25) to[out=0,in=180] (7.6,-1.1);
\node[blue] at (8.3,-1.1)  {$e_5=2$}; 

\draw[blue] (7.5,-1.31) to[out=0,in=180] (7.56,-1.4);
\draw[blue] (7.5,-1.49) to[out=0,in=180] (7.56,-1.4);
\node[blue] at (8.3,-1.4)  {$e_7=1$}; 

\draw[red] (7.5,-1.55) to[out=0,in=180] (7.6,-1.8);
\draw[red] (7.5,-2.05) to[out=0,in=180] (7.6,-1.8);
\node[red] at (8.3,-1.8)  {$f_2=3$}; 

\draw[blue] (7.5,-.95) to[out=0,in=180] (7.6,-1.1);
\draw[blue] (7.5,-1.25) to[out=0,in=180] (7.6,-1.1);
\node[blue] at (8.3,-1.1)  {$e_5=2$}; 

\draw[red] (7.5,-2.15) to[out=0,in=180] (7.6,-2.3);
\draw[red] (7.5,-2.45) to[out=0,in=180] (7.6,-2.3);
\node[red] at (8.3,-2.3)  {$f_4=2$}; 

\draw[red] (7.5,-2.55) to[out=0,in=180] (7.6,-2.7);
\draw[red] (7.5,-2.85) to[out=0,in=180] (7.6,-2.7);
\node[red] at (8.3,-2.7)  {$f_7=2$}; 

\draw (-.4,.05)  to[out=180,in=0]  (-.7,-.7);
\draw (-.4,-1.45) to[out=180,in=0] (-.7,-.7);
\node at (-1,-.7)  {$d'_0$}; 

\node[black] at (5.85,-3.2)  {($e_3=e_4=e_6=f_1=f_3=f_5=f_6=0$)}; 

\node at (0,0)  {$\bullet$};
 \node at (0,-.2)  {$\bullet$};
 \node at (0,-.4)  {$\bullet$};
 \node at (0,-.6)  {$\bullet$};
 \node at (0,-.8)  {$\bullet$};
 \node at (0,-1)  {$\bullet$};
 \node at (0,-1.2)  {$\bullet$};
 \node at (0,-1.4)  {$\bullet$};

 \node at (1,0)  {$\bullet$};
 \node at (1,-.2)  {$\bullet$};
 \node at (1,-.4)  {$\bullet$};
 \node at (1,-.6)  {$\bullet$};
 \node at (1,-.8)  {$\bullet$};
 \node at (1,-1)  {$\bullet$};
 \node at (1,-1.2)  {$\bullet$};
 \node at (1,-1.4)  {$\bullet$};

 \node at (2,0)  {$\bullet$};
 \node at (2,-.2)  {$\bullet$};
 \node at (2,-.4)  {$\bullet$};
 \node at (2,-.6)  {$\bullet$};
 \node at (2,-.8)  {$\bullet$}; 
 \node at (2,-1)  {$\bullet$};
 \node at (2,-1.2)  {$\bullet$};
 \node at (2,-1.4)  {$\bullet$};
 \node at (2,-1.6)  {$\bullet$};
 \node at (2,-1.8)  {$\bullet$};
 \node at (2,-2)  {$\bullet$};

 \node at (3,0)  {$\bullet$};
 \node at (3,-.2)  {$\bullet$};
 \node at (3,-.4)  {$\bullet$};
 \node at (3,-.6)  {$\bullet$};
 \node at (3,-.8)  {$\bullet$};
 \node at (3,-1)  {$\bullet$};
 \node at (3,-1.2)  {$\bullet$};
 \node at (3,-1.4)  {$\bullet$};
 \node at (3,-1.6)  {$\bullet$};
 \node at (3,-1.8)  {$\bullet$};
 \node at (3,-2)  {$\bullet$};

 \node at (4,0)  {$\bullet$};
 \node at (4,-.2)  {$\bullet$};
 \node at (4,-.4)  {$\bullet$};
 \node at (4,-.6)  {$\bullet$};
 \node at (4,-.8)  {$\bullet$};
 \node at (4,-1)  {$\bullet$};
 \node at (4,-1.2)  {$\bullet$};
 \node at (4,-1.4)  {$\bullet$};
 \node at (4,-1.6)  {$\bullet$};
 \node at (4,-1.8)  {$\bullet$};
 \node at (4,-2)  {$\bullet$};
 \node at (4,-2.2)  {$\bullet$};
 \node at (4,-2.4)  {$\bullet$};

 \node at (5,0)  {$\bullet$};
 \node at (5,-.2)  {$\bullet$};
 \node at (5,-.4)  {$\bullet$};
 \node at (5,-.6)  {$\bullet$};
 \node at (5,-.8)  {$\bullet$};
 \node at (5,-1)  {$\bullet$};
 \node at (5,-1.2)  {$\bullet$};
 \node at (5,-1.4)  {$\bullet$};
 \node at (5,-1.6)  {$\bullet$};
 \node at (5,-1.8)  {$\bullet$};
 \node at (5,-2)  {$\bullet$};
 \node at (5,-2.2)  {$\bullet$};
 \node at (5,-2.4)  {$\bullet$};

 \node at (6,0)  {$\bullet$};
 \node at (6,-.2)  {$\bullet$};
 \node at (6,-.4)  {$\bullet$};
 \node at (6,-.6)  {$\bullet$};
 \node at (6,-.8)  {$\bullet$};
 \node at (6,-1)  {$\bullet$};
 \node at (6,-1.2)  {$\bullet$};
 \node at (6,-1.4)  {$\bullet$};
 \node at (6,-1.6)  {$\bullet$};
 \node at (6,-1.8)  {$\bullet$};
 \node at (6,-2)  {$\bullet$};
 \node at (6,-2.2)  {$\bullet$};
 \node at (6,-2.4)  {$\bullet$};

 \node at (7,0)  {$\bullet$};
 \node at (7,-.2)  {$\bullet$};
 \node at (7,-.4)  {$\bullet$};
 \node at (7,-.6)  {$\bullet$};
 \node at (7,-.8)  {$\bullet$};
 \node at (7,-1)  {$\bullet$};
 \node at (7,-1.2)  {$\bullet$};
 \node at (7,-1.4)  {$\bullet$};
 \node at (7,-1.6)  {$\bullet$};
 \node at (7,-1.8)  {$\bullet$};
 \node at (7,-2)  {$\bullet$};
 \node at (7,-2.2)  {$\bullet$};
 \node at (7,-2.4)  {$\bullet$};
 \node at (7,-2.6)  {$\bullet$}; 
 \node at (7,-2.8)  {$\bullet$}; 

\end{tikzpicture}
\end{equation*}
\caption{A lace diagram with weakly increasing dimension vector and horizontal laces, cf. Section~\ref{sec:ProofOfQIP}.} \label{fig:largeLD}
\end{figure}
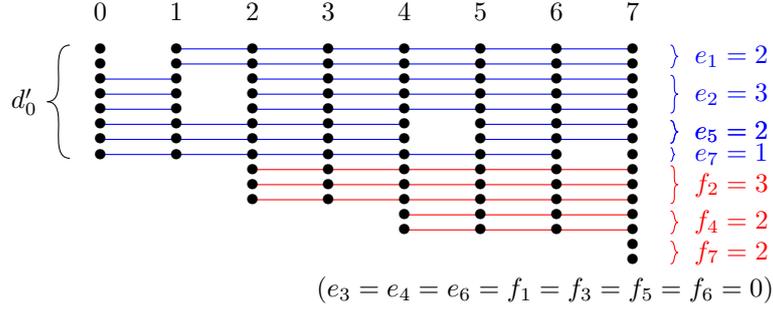

\section{Explicit formulas for the codimensions of loci} \label{sec:SolvingQIP}

In this section, we make use of the rather simple structure of \eqref{eq:QIP} and give an explicit solution to it. The final result is stated in Theorem \ref{thm:main-codim} below, but this requires some additional notation.

\begin{lemma}
    The function 
    \[l\in \{0,1,\ldots,N\}\mapsto \left(\sum_{i=0}^l d'_i\right) - l d'_l\]
    is weakly decreasing and positive for $l=1$.
\end{lemma}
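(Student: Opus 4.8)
The plan is to prove both assertions by directly computing the function and its successive differences. Write $\varphi(l) := \left(\sum_{i=0}^l d'_i\right) - l\, d'_l$ for $l \in \{0,1,\ldots,N\}$, where as in the statement $\ud' = (d'_0\le d'_1\le \cdots \le d'_N)$ denotes the weakly increasing rearrangement of $\ud$.

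First I would evaluate at $l=1$: straight from the definition, $\varphi(1) = d'_0 + d'_1 - d'_1 = d'_0$. This is positive, since we may assume $d'_0 = \min\ud \ge 1$: when $\min\ud = 0$ one has $\Sigma^0_{\ud} = \Rep_{\ud}$ and the question is vacuous (cf.\ Corollary~\ref{cor:min_ud}). For the monotonicity, I would compute, for $0 \le l \le N-1$,
\[
\varphi(l+1) - \varphi(l) = d'_{l+1} - (l+1)\,d'_{l+1} + l\, d'_l = l\,(d'_l - d'_{l+1}).
\]
Since $\ud'$ is weakly increasing we have $d'_l \le d'_{l+1}$, and $l \ge 0$, so $\varphi(l+1) - \varphi(l) \le 0$; hence $\varphi$ is weakly decreasing on $\{0,1,\ldots,N\}$, which completes the proof.

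There is essentially no obstacle here: the argument is a one-line telescoping identity. The only conceptual point worth flagging is that it is the monotonicity of the \emph{sorted} vector $\ud'$ — not of $\ud$ itself — that makes each difference $l\,(d'_l - d'_{l+1})$ non-positive, which is exactly why the statement (and the subsequent analysis of \eqref{eq:QIP}) is formulated in terms of $\ud'$.
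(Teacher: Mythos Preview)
Your proof is correct and is essentially identical to the paper's: both compute the successive difference $\varphi(l+1)-\varphi(l)=l(d'_l-d'_{l+1})\le 0$ using that $\ud'$ is weakly increasing, and evaluate $\varphi(1)=d'_0$ for the positivity claim. Your explicit handling of the edge case $d'_0=0$ is a small refinement over the paper, which simply asserts $d'_0>0$.
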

\begin{proof}
    This follows from
    \[
\left(\sum_{i=0}^l d'_i\right) - (l+1) d'_{l+1} - \left(   \left(\sum_{i=0}^l d'_i\right) - l d'_l\right)=l(d'_l-d'_{l+1})\leq 0
    \]
since $\ud'$ is weakly decreasing. The value for $l=1$ is $d'_0>0$.
\end{proof}

Depending of $\ud$, this function may or may not become negative for $l=N$. We can, however, always make the following definition.

\begin{defn} \label{def:d1}
Define 
\[
m=\max \left\{l\ |\ \sum_{i=0}^l d'_i \geq l d'_l\right\} \in \{1,2,\ldots,N\}.
\]
\end{defn}

To solve \eqref{eq:QIP}, we first define an integer $D$ and vectors $\us, \hat{\us}$ associated with the weakly increasing dimension vector $\ud'=(d'_0\leq d'_1 \leq \ldots \leq d'_N)$. The reader may find Figure~\ref{Fig:distances} and Example~\ref{ex:closestpoint} useful to understand these definitions.

\begin{defn} \label{def:d2}
Consider the points
\[
\begin{array}{l}
    \us:=\left(d'_0-d'_1, d'_0-d'_2,d'_0-d'_3,\ldots\ldots\ldots,d'_0-d'_N \right) \in \Z^N,
    \\
    \hat{\us}:=\left(d'_0-d'_1, d'_0-d'_2,d'_0-d'_3,\ldots,d'_0-d'_m \right) \in \Z^m,
\end{array}
\]
and let $\hat{\uv}_1,\hat{\uv}_2,\ldots,\hat{\uv}_k$ be the set of integer points in the hyperplane $\{\sum_{i=1}^m x_i=d'_0\}$ closest to~$\hat{\us}$ in the Euclidean distance.
\end{defn}

\begin{defn}\label{def:d3}
    Define 
    \[\hat{D}:=\| \hat{\us}-\hat{\uv}_i \|^2\]
    (which is independent of $i$ by definition). Define $\uv_i$ to be the image of $\hat{\uv}_i$ under the standard embedding $\Z^m \subset \Z^N$, $(x_1,\ldots,x_m)\mapsto (x_1,\ldots,x_m,0,\ldots,0)$ and $D:=\| \us-\uv_i \|^2$.
\end{defn}

By construction we have
\begin{equation}\label{eqn:D_Dhat}
D=\hat{D}+\sum_{i=m+1}^N(d'_i-d'_0)^2.
\end{equation}


\begin{thm} \label{thm:QIPreduced2closestpoint}
    For $\ud'=(d'_0\leq d'_1 \leq \ldots\leq d'_N)$ the solution of the quadratic integer program \eqref{eq:QIP} (ie. the codimension of $\Sigma^0_{\ud}$) is  
     \begin{equation}\label{eq:gD}
    \frac{1}{2}\left( \hat{D} + {d'_0}^2 - \sum_{i=1}^m (d'_i-d'_0)^2  \right), 
    \end{equation}
    and this minimum is attained at $k$ points (ie. $\Sigma^0_{\ud}$ has $k$ largest dimensional components).
\end{thm}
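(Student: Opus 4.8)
The goal is to evaluate the minimum of \eqref{eq:QIP} and the number of minimizers by a change of variables that turns it into a lattice closest-vector problem. First I would rewrite the objective $G_{\ud}(\ue)=\sum_{1\leq j\leq i\leq N}e_i(e_j+d'_j-d'_{j-1})$ in a more symmetric way. The plan is to split off the diagonal and off-diagonal parts: the quadratic part $\sum_{j\leq i}e_ie_j$ equals $\tfrac12\big((\sum_i e_i)^2 + \sum_i e_i^2\big)$ on the constraint surface, and, using the substitution $x_i=\sum_{k\geq i}e_k$ (so $e_i=x_i-x_{i+1}$, $x_1=d'_0$, $x_{N+1}=0$, $x_i$ weakly decreasing), the linear part $\sum_{j\leq i}e_i(d'_j-d'_{j-1})$ telescopes. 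After this substitution $G_{\ud}$ should become, up to an additive constant depending only on $\ud'$ and a factor $\tfrac12$, the squared Euclidean distance $\|\us-\uv\|^2$ from the fixed point $\us=(d'_0-d'_1,\ldots,d'_0-d'_N)$ to the integer point $\uv$ naturally associated with $\ue$ via the same telescoping. The precise bookkeeping of which additive constant appears is exactly what produces the expression \eqref{eq:gD}: one collects the terms $\tfrac12 d_0'^2$ (from $\tfrac12(\sum e_i)^2$) and $-\tfrac12\sum_i(d'_i-d'_0)^2$ (from completing the square in the linear part), leaving $\tfrac12\hat D$ once the minimum over $\uv$ is taken.

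The second step is to identify the feasible set of the transformed problem. The constraint $e_i\in\N$, $\sum e_i=d'_0$ corresponds, after the change of variables, to integer points lying in a simplex (or on the hyperplane $\sum x_i=d'_0$ intersected with an orthant), not the whole hyperplane. So a priori the constrained minimum could differ from the unconstrained closest-vector value over the affine lattice $\{\sum x_i=d'_0\}\cap\Z^N$. Here I would invoke the key structural input already flagged in the introduction: the Voronoi cells of the type $A_{N-1}$ root lattice have the expected (permutohedral) shape, which guarantees that the closest lattice point to $\us$ automatically satisfies the sign/ordering constraints needed to be feasible — i.e. lies in the relevant simplex. Concretely, because $\ud'$ is weakly increasing the coordinates of $\us$ are weakly decreasing and nonpositive except possibly the first, and one checks that rounding to the nearest lattice point on the hyperplane preserves these inequalities. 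This is the step I expect to be the main obstacle: one must show that passing from the constrained QIP to the unconstrained CVP does not change either the optimal value or the number of optima, and this is precisely where the definition of $m$ (Definition~\ref{def:d1}) enters — coordinates $m+1,\ldots,N$ of $\us$ are so negative that the optimal $\uv$ must have zero in those slots, which is why one only needs the closest-point problem for $\hat\us\in\Z^m$ and then pads with zeros, giving \eqref{eqn:D_Dhat}.

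With these two steps in place the theorem follows formally: by Theorem~\ref{thm:QIP} the codimension of $\Sigma^0_{\ud}$ equals $\min G_{\ud}$, which by the computation equals $\tfrac12\big(\|\us-\uv_i\|^2 + d_0'^2 - \sum_{i=1}^N(d'_i-d'_0)^2\big)$ evaluated at any minimizer; substituting $D=\hat D+\sum_{i=m+1}^N(d'_i-d'_0)^2$ and cancelling the tail sum against part of $\sum_{i=1}^N(d'_i-d'_0)^2$ yields exactly \eqref{eq:gD}. The number of minimizers of \eqref{eq:QIP} equals the number of lattice points realizing $\hat D$, which is $k$ by Definition~\ref{def:d2}, and again by Theorem~\ref{thm:QIP} this is the number of top-dimensional components of $\Sigma^0_{\ud}$. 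The only remaining care is to confirm the change of variables $\ue\leftrightarrow\uv$ is a bijection between feasible points and the relevant lattice points in the simplex, so that ``number of minimizers'' is transported correctly; this is routine once the simplex/orthant description is pinned down. I would also double-check the constant by testing against a small case such as $\ud=(2,2,3)$ from Example~\ref{ex:223withQIP}, where $C=4$ must come out.
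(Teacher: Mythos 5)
Your proposal follows the same route as the paper: complete the square on $G_{\ud}$ to identify it, up to the affine shift $\tfrac12\big({d'_0}^2-\sum_i(d'_i-d'_0)^2\big)$, with $\tfrac12\|\ue-\us\|^2$, then use Definition~\ref{def:d1} and the weakly increasing hypothesis to argue the optimal $e_i$ vanish for $i>m$, reducing to the $m$-dimensional closest-point problem defining $\hat D$. Two small corrections: the substitution $x_i=\sum_{k\geq i}e_k$ is an unnecessary detour --- the telescope $\sum_{j\leq i}(d'_j-d'_{j-1})=d'_i-d'_0$ and the identity $2\sum_{j\leq i}e_ie_j-\big(\sum_i e_i\big)^2=\sum_i e_i^2$ complete the square directly in the $\ue$ variables, which is what the paper does; and the Voronoi-cell / permutohedral structure of the type $A$ lattice is not invoked in the proof of this theorem (which only reformulates the QIP as a CVP and isolates the relevant $m$ coordinates via the projection argument), but only afterwards, in the explicit Conway--Sloane solution of the CVP leading to Proposition~\ref{prop:Dhat_formula}.
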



\begin{proof}
Consider the \eqref{eq:QIP}. 
Denoting the objective function 
$\sum_{j\leq i} e_i(e_j+d'_j-d'_{j-1})$ by $G_{\ud}(\ue)$, we have
\begin{eqnarray*}
2G_{\ud}(\ue)-{d'_0}^2 &=& 2G_{\ud}(\ue) - \left( \sum e_i \right)^2\\
&=& \sum_{j\leq i} e_i(d'_j-d'_{j-1}) + \sum_i e_i^2\\
&=& \sum_i 2e_i(d'_i-d'_0) +e_i^2\\
&=& \left(\sum_i (e_i-(d'_0-d'_i))^2 \right)
-\sum_i (d'_i-d'_0)^2
\end{eqnarray*}

Therefore, using the notations from Definitions~\ref{def:d1}-\ref{def:d3}, for the optimal solution $G_{\ud}^{opt}$ of \eqref{eq:QIP} we get
\[
2G_{\ud}^{opt}-{d'_0}^2+\sum_{i=1}^N s_i^2 = D,
\]
where $D$ is the smallest distance-square between $\us$ and integer points of the simplex
\[ 
\sum_{i=1}^N e_i=d'_0, \qquad  e_i\geq 0,
\]
cf. the left picture in Figure~\ref{Fig:distances}. The 
projection of $\us$ to the hyperplane $\sum_{i=1}^N e_i=d'_0$ is 
\[
\us+ \left(\frac{d'_0 - \sum_{i=1}^N s_i}{N} \right)(1,1,\ldots,1).
\]
Because of our choices the components of this vector are weakly decreasing; and possibly the few last components are negative. In those coordinates the optimal choice for $e_i$ must be 0, and our task reduces to the analogous ``smallest distance-square'' problem in the smaller dimensional space with coordinates $e_1,\ldots,e_l$. We can drop the last few coordinates and keep only the first $l$ coordinates, until the vector 
\[
(s_1,s_2,\ldots,s_l) + \left(\frac{d'_0 - \sum_{i=1}^l s_i}{l}\right)(1,1,\ldots,1).
\]
has all non-negative coordinates. The condition for the latter is 
\[
s_l+\left( \frac{d'_0- \sum_{i=1}^l s_i}{l} \right) \geq 0.
\]
Substituting the definition $s_l=d'_0-d'_l$, this condition reduces to the relation in Definition~\ref{def:d1}: $\sum_{i=0}^l d'_i \geq ld'_l$. That is we can drop all but the first $m$ coordinates.

Together with equation \ref{eqn:D_Dhat}, this completes the proof of Theorem~\ref{thm:QIPreduced2closestpoint}, both about the extreme value of $\eqref{eq:QIP}$ and about the number of times the extreme value is attained.
\end{proof}

\begin{figure}
\begin{equation*}
\begin{tikzpicture}[scale=.8]
\draw[->] (0,0) -- (4,-.8); 
\draw[->] (0,0) -- (4,1.5);
\draw[->] (0,0) -- (0,5);
\draw[thick] (2.5,-.5) -- (0,4) -- (3,1.5*3/4) -- (2.5,-.5);
\node at (2.5,-.5) {$\bullet$};
\node[blue] at (2.3,-.5+.2*4.5/2.5) {$\bullet$};
\node[blue] at (2.1,-.5+.4*4.5/2.5) {$\bullet$};
\node at (1.9,-.5+.6*4.5/2.5) {$\bullet$};
\node at (1.7,-.5+.8*4.5/2.5) {$\bullet$};
\node at (2.6,-.5+13/40+ 0*4.5/2.5) {$\bullet$};
\node at (2.4,-.5+13/40+.2*4.5/2.5) {$\bullet$};
\node at (2.2,-.5+13/40+.4*4.5/2.5) {$\bullet$};
\node at (2.0,-.5+13/40+.6*4.5/2.5) {$\bullet$};
\node at (2.7,-.5+2*13/40+ 0*4.5/2.5) {$\bullet$};
\node at (2.5,-.5+2*13/40+.2*4.5/2.5) {$\bullet$};
\node at (0,4) {$\bullet$};
\node at (3,9/8) {$\bullet$};
\draw[blue] (-2.5,-2) -- (2.3,-.5+.2*4.5/2.5);
\draw[blue] (-2.5,-2) -- (2.1,-.5+.4*4.5/2.5);
\node[blue] at (-2.5,-2) {$\bullet$};
\node[blue] at (-2.6,-1.5) {$\us$};
\node at (4,-.5) {$e_1$};
\node at (.4,4.7) {$e_2$};
\node at (4,1.7) {$e_3$};
\node at (2.7,3.1) {$\sum^N\!e_i=d'_0$};
\node at (2.7,2.5) {$e_i\geq 0$};
\node at (4,4.5) {$\R^N$};
\node at (-2,1.5) {distance${}^2=D$};
\draw[dashed,->] (-1.8,1.2) -- (-.4,-.95);
\draw[dashed,->] (-1.8,1.2) -- (-.5,-1.2);
\node[blue] at (2,-2) {$\uv_1,\uv_2$};
\draw[dashed,blue,->] (1.45,-1.7) to[out=120,in=-120] (2.05,.1);
\draw[dashed,blue,->] (2.1,-1.7) to[out=120,in=-120] (2.25,-.25);
\end{tikzpicture}
\qquad\qquad\qquad
\begin{tikzpicture}[scale=.8,baseline=-50]
\draw[->] (0,0) -- (4,0); 
\draw[->] (0,0) -- (0,4);
\draw[thick] (4,-1)--(-1,4);
\node at (3.3,-.3) {$\bullet$};
\node at (3.6,-.6) {$\bullet$};
\node at (3,0) {$\bullet$};
\node[blue] at (2.7,.3) {$\bullet$};
\node[blue] at (2.4,.6) {$\bullet$};
\node at (2.1,.9) {$\bullet$};
\node at (0,3) {$\bullet$};
\node at (-.3,3.3) {$\bullet$};
\node at (0.3,2.7) {$\bullet$};
\draw[blue] (2.55-3,.3-3) -- (2.7,.3);
\draw[blue] (2.55-3,.3-3) -- (2.4,.6);
\node at (4.4,-.15) {$e_1$};
\node at (.45,3.8) {$e_2$};
\node at (2.3,2.2) {$\sum^m\!e_i=d'_0$};
\node[blue] at (2.55-3,.3-3) {$\bullet$};
\node[blue] at (2.55-3-.3,.3-3+.5) {$\hat\us$};
\node[blue] at (3.15,0.55) {$\hat{\uv}_2$};
\node[blue] at (2.7,0.99) {$\hat{\uv}_1$};
\node at (4,3.5) {$\R^m$};
\node[blue] at (4.6,1.3) {$k=2$};
\draw[dashed,->] (4,1.3) -- (3,1.05);
\draw[dashed,->] (4,1.3) -- (3.3,.7);
\draw[dashed,->,red] (3,-2) to [out=160,in=-130] (2.5,.4);
\node[red] at (3.2,-2) {$\hat{\up}$};
\end{tikzpicture}
\end{equation*}
\caption{Illustration of the concepts in Section~\ref{sec:SolvingQIP}.} \label{Fig:distances}
\end{figure}
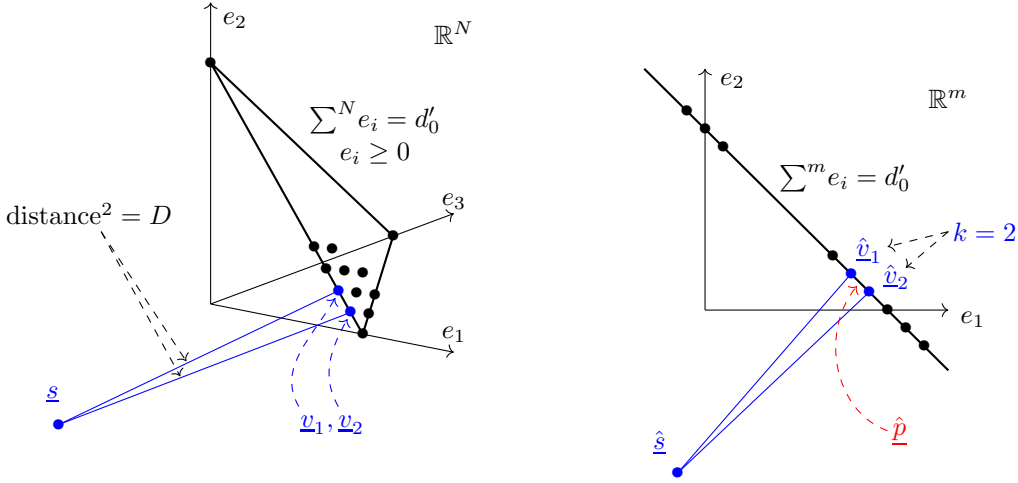

To get more explicit formulas from Theorem \ref{thm:QIPreduced2closestpoint}, it remains to find the integer vectors in the $\{\sum_{i=1}^m e_i=d'_0\}$ hyperplane closest to a given vector $\hat{\us}$ in $\Z^m$. Luckily, this problem has an explicit solution \cite[Ch.~20]{ConwaySloane}, that we describe now, for our special case.

Consider 
\[
\hat{\up} =\hat{\us} + \left( \frac{d'_0-\sum_{j=1}^m \hat{\us}_j}{m} \right)(1,1,\ldots,1)\quad \in \R^m,
\]
this is the projection of $\hat{\us}$ to the hyperplane $\sum_{i=1}^m\! e_i=d'_0$. Writing
\[
S:= \sum_{j=0}^m d'_j
\]
we have for every $1\leq i\leq m$
\[
\hat{\up}_i =(d'_0-d'_i)+\frac{d'_0-\left(\sum_{j=1}^m(d'_0-d'_j)\right)}{m} =d'_i-\frac{S}{m}.
\]
Rounding it to the closest integer coordinate-wise will be denoted by $r(\hat{\up})=(r(\hat{p}_i))_{i=1,\ldots,m}$, 
where $r(x)=\lfloor x+\frac12 \rfloor$, so that
\[
r(\hat{\up}_i)=-d_i+ \left\lfloor \frac{S_0}{m}+\frac{1}{2}\right\rfloor.
\]
This vector may not be in the hyperplane and we need to correct for this. Let 
\begin{equation}\label{eqn:delta}
    \delta=d'_0-\sum_{i=1}^m r(\hat{p}_i)=S-m\left\lfloor \frac{S}{m}+\frac{1}{2}\right\rfloor
\end{equation}
and let $\epsilon=\mathrm{sgn}(\delta) \in \{-1,0,1\}$ be its sign. Let
$\Delta_1,\Delta_2,\ldots \in \{-1,0,1\}^m$ be the set of vectors with $|\delta|$ coordinates being $\epsilon$, and the rest of the coordinates being $0$. 

\begin{prop}{\cite[Ch.~20]{ConwaySloane}}
    With the notations above, the $\hat{\uv}_1, \hat{\uv}_1, \ldots,\hat{\uv}_k$ vectors are 
\[
r(\hat{\up})+\Delta_1, r(\hat{\up})+\Delta_2, r(\hat{\up})+\Delta_3, \ldots.
\]
In particular, the number of top-dimensional components of $\Sigma^0_{\ud}$ is
\begin{equation}\label{eqn:number_top_comp}
    k=\binom{m}{\delta}=\binom{m}{S-m\left\lfloor \frac{S}{m}+\frac{1}{2}\right\rfloor}.
\end{equation}
\end{prop}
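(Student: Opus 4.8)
We sketch the proof; it is the specialization to our situation of the classical closest-vector algorithm for root lattices of type~$A$ \cite[Ch.~20]{ConwaySloane}, and the very special form of $\hat{\us}$ makes the argument short. First I would reduce to a closest-vector problem inside a lattice. Set $H:=\{x\in\R^m\ |\ \sum_{i=1}^m x_i=d'_0\}$ and $L:=H\cap\Z^m$, the latter being a coset of the root lattice $A_{m-1}$. Since $\hat{\up}$ is the orthogonal projection of $\hat{\us}$ onto $H$, the Pythagorean theorem gives $\|\hat{\us}-\uv\|^2=\|\hat{\us}-\hat{\up}\|^2+\|\hat{\up}-\uv\|^2$ for every $\uv\in H$; hence the elements of $L$ closest to $\hat{\us}$ are exactly those closest to $\hat{\up}$, so it suffices to find the points of $L$ nearest to $\hat{\up}$.

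Next I would exploit that all coordinates of $\hat{\up}$ share a common fractional part: since $\hat{p}_i=d'_i-S/m$, the differences $\hat{p}_i-\hat{p}_j=d'_i-d'_j$ lie in $\Z$. Let $\phi$ denote this common fractional part, chosen in $[-\tfrac{1}{2},\tfrac{1}{2})$, so that the coordinatewise nearest integer vector $r(\hat{\up})$ (with $r(x)=\lfloor x+\tfrac{1}{2}\rfloor$) satisfies $r(\hat{p}_i)=\hat{p}_i-\phi$. Summing $\hat{p}_i=r(\hat{p}_i)+\phi$ over $i$ and using $\sum_i\hat{p}_i=d'_0$ yields $m\phi=d'_0-\sum_i r(\hat{p}_i)=\delta$; in particular $|\delta|\le m/2$ and $\epsilon=\mathrm{sgn}(\delta)=\mathrm{sgn}(\phi)$. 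Writing an arbitrary integer vector as $\uv=r(\hat{\up})+\underline{n}$ with $\underline{n}\in\Z^m$, the condition $\uv\in L$ becomes $\sum_i n_i=\delta$, and $\|\hat{\up}-\uv\|^2=\sum_{i=1}^m(\phi-n_i)^2$. Thus everything reduces to minimizing the separable function $\sum_i(\phi-n_i)^2$ over $\underline{n}\in\Z^m$ subject to $\sum_i n_i=\delta$.

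This last integer program is solved greedily, coordinate by coordinate: $t\mapsto(\phi-t)^2$ is minimized over $\Z$ at $t=0$, a single adjustment from $0$ toward $\epsilon$ costs $1-2|\phi|\ge0$, and every further adjustment (another step toward $\epsilon$, or any step toward $-\epsilon$) costs strictly more. An exchange argument then shows the minimizers are exactly the vectors $\underline{n}$ with precisely $|\delta|$ coordinates equal to $\epsilon$ and the remaining $m-|\delta|$ equal to $0$ — that is, the vectors $\Delta_1,\dots,\Delta_k$ — so that $\hat{\uv}_i=r(\hat{\up})+\Delta_i$ as claimed. There are $\binom{m}{|\delta|}$ of these, and substituting the formula \eqref{eqn:delta} for $\delta$ gives \eqref{eqn:number_top_comp}; the case $\delta=0$ is immediate, since then $r(\hat{\up})\in L$ is the unique nearest point and $k=1$.

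I expect the only delicate point to be making the exchange argument fully rigorous at the boundary value $\phi=-\tfrac{1}{2}$ (equivalently $2S\equiv 0\pmod{m}$ with $m\nmid S$), where the first step toward $\epsilon$ costs $0$ and $n_i=0$ becomes interchangeable with $n_i=\epsilon$; one must check that the set of minimizers is still exactly $\{r(\hat{\up})+\Delta_i\}$, which holds because the hard constraint $\sum_i n_i=\delta$ together with the strictly positive cost $1+2|\phi|=2$ of a wrong-direction step still forbids using more than $|\delta|$ nonzero coordinates. All remaining verifications (the explicit form of $r(\hat{\up})$ and of $\delta$, and the resulting count) are routine bookkeeping.
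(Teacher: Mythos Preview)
The paper does not prove this proposition at all; it simply cites \cite[Ch.~20]{ConwaySloane} and uses the result. Your sketch is a correct, self-contained version of the standard closest-vector algorithm for the $A_{m-1}$ root lattice, specialized to the pleasant circumstance that all coordinates of $\hat{\up}$ share the same fractional part (so coordinatewise rounding already lands in $\Z^m$ and only a uniform ``deficiency'' $\delta$ remains to be distributed). The reduction via Pythagoras, the identification $m\phi=\delta$, and the greedy exchange argument are all sound, and your treatment of the boundary $\phi=-\tfrac12$ is adequate for a sketch.

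One cosmetic point: you reproduce the paper's sign slip in writing $\hat{p}_i=d'_i-S/m$; the computation actually gives $\hat{p}_i=S/m-d'_i$ (as the worked Example confirms). This is harmless for your argument, since only the fact that the $\hat{p}_i$ differ pairwise by integers is used.
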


\begin{example} \rm \label{ex:closestpoint}
Let $\ud'=(7,7,8,9,12,13)$. Definition~\ref{def:d1} gives $m=3$, and we obtain
\[
\us=(0,-1,-2,-5,-6), \qquad 
\hat{\us}=(0,-1,-2), \qquad
\hat{\up}=\left(\frac{10}{3},\frac{7}{3},\frac{4}{3}\right),
\qquad
r(\hat{\up})=(3,2,1).
\]
We have $\delta=1$, $\epsilon=1$, and the collection of $\Delta$ vectors is
$(1,0,0), (0,1,0), (0,0,1)$.
Therefore $k=3$ and 
$\hat{\uv}_1=(4,2,1)$, $\hat{\uv}_2=(3,3,1)$, $\hat{\uv}_3=(3,2,2)$.
We obtain that \eqref{eq:QIP} takes its minimum at the vectors
\[
\uv_1=(4,2,1,0,0), \qquad
\uv_2=(3,3,1,0,0), \qquad
\uv_3=(3,2,2,0,0).
\]
Further, we have $\hat{D}=\|\hat{\us}-\hat{\uv}_1\|^2=\|\hat{\us}-\hat{\uv}_2\|^2=\|\hat{\us}-\hat{\uv}_3\|^2=34$, and the optimal solution of \eqref{eq:QIP} according to \eqref{eq:gD} is $G_{\ud}^{opt}=39$. We conclude that $\Sigma^0_{\ud'}$ has 3 top-dimensional components, each of codimension $39$.
\end{example}

It remains to compute $\hat{D}$ more explicitly. We write $\left\{x\right\}=x-\lfloor x\rfloor$ for the fractional part of $x\in\R$.

\begin{lemma}\label{lem:delta}
   If $\delta\geq 0$, we have
    \[
    \delta =m\left\{\frac{S}{m}\right\}\quad \text{and}\quad \left\lfloor \frac{S}{m}+\frac{1}{2}\right\rfloor=\frac{S}{m}-\left\{\frac{S}{m}\right\}.
    \]
 If $\delta<0$, we have
\[
\delta = m\left\{\frac{S}{m}\right\}-m\quad \text{and}\quad \left\lfloor \frac{S}{m}+\frac{1}{2}\right\rfloor=\frac{S}{m}-\left\{\frac{S}{m}\right\}+1.
\]
\end{lemma}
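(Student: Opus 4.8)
\subsection*{Proof plan for Lemma~\ref{lem:delta}}

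The plan is to reduce everything to a statement about the fractional part $\left\{S/m\right\}$. First I would write $S/m = \lfloor S/m\rfloor + \left\{S/m\right\}$ with $0 \le \left\{S/m\right\} < 1$, so that $\tfrac{S}{m} + \tfrac12 = \lfloor S/m\rfloor + \bigl(\left\{S/m\right\} + \tfrac12\bigr)$. The quantity $\left\{S/m\right\} + \tfrac12$ lies in $[\tfrac12,\tfrac32)$, so its floor is $0$ when $\left\{S/m\right\} < \tfrac12$ and $1$ when $\left\{S/m\right\} \ge \tfrac12$. This gives the two-case formula
\[
\left\lfloor \frac{S}{m} + \frac12 \right\rfloor =
\begin{cases}
\lfloor S/m\rfloor & \text{if } \left\{S/m\right\} < 1/2,\\
\lfloor S/m\rfloor + 1 & \text{if } \left\{S/m\right\} \ge 1/2.
\end{cases}
\]

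Next I would substitute this into the defining relation $\delta = S - m\lfloor S/m + \tfrac12\rfloor$ from \eqref{eqn:delta}. In the first case $\delta = S - m\lfloor S/m\rfloor = m\left\{S/m\right\} \ge 0$, and in the second case $\delta = S - m\lfloor S/m\rfloor - m = m\left\{S/m\right\} - m$, which is strictly negative because $\left\{S/m\right\} < 1$. This already shows that the sign of $\delta$ is governed precisely by whether $\left\{S/m\right\} < \tfrac12$ or $\left\{S/m\right\} \ge \tfrac12$: conversely, $\delta \ge 0$ can only occur in the first case and $\delta < 0$ only in the second. Hence the hypothesis $\delta \ge 0$ of the lemma is equivalent to $\left\{S/m\right\} < \tfrac12$, and the hypothesis $\delta < 0$ to $\left\{S/m\right\} \ge \tfrac12$.

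Finally, in each case I would read off the two claimed identities. When $\left\{S/m\right\} < \tfrac12$: the computation above gives $\delta = m\left\{S/m\right\}$, which is the first claim, and $\lfloor S/m + \tfrac12\rfloor = \lfloor S/m\rfloor = \tfrac{S}{m} - \left\{S/m\right\}$, which is the second. When $\left\{S/m\right\} \ge \tfrac12$: we get $\delta = m\left\{S/m\right\} - m$, the first claim, and $\lfloor S/m + \tfrac12\rfloor = \lfloor S/m\rfloor + 1 = \tfrac{S}{m} - \left\{S/m\right\} + 1$, the second.

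There is essentially no serious obstacle here; the only points requiring a little care are the boundary value $\left\{S/m\right\} = \tfrac12$, which belongs to the $\delta < 0$ branch consistently with $\delta = -m/2 < 0$, and the degenerate case $m \mid S$, where $\left\{S/m\right\} = 0$ and $\delta = 0$, so that it correctly falls under the $\delta \ge 0$ branch.
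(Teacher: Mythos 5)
Your proof is correct and follows essentially the same route as the paper: both arguments reduce $\lfloor S/m + \tfrac12\rfloor$ to a case split on whether the fractional part $\{S/m\}$ (equivalently, the remainder $\bar S = S - m\lfloor S/m\rfloor$ used in the paper) is smaller than or at least $\tfrac12$ (resp. $m/2$), then substitute into $\delta = S - m\lfloor S/m + \tfrac12\rfloor$. The only difference is cosmetic — the paper works with the integer remainder $\bar S$ while you work directly with $\{S/m\} = \bar S/m$.
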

\begin{proof}
    Write $S=mT+\bar{S}$ with $T\in \N$ and $0\leq \bar{S}<m$. Then by equation \eqref{eqn:delta}, we have
    \[
    \delta\geq 0 \Leftrightarrow \bar{S}<\frac{m}{2}.
    \]
Moreover,
\[
 m\left\lfloor \frac{S}{m}+\frac{1}{2}\right\rfloor = mT+ m\left\lfloor \frac{2\bar{S}+m}{2m}\right\rfloor 
\]
which together with equation \eqref{eqn:delta} concludes the proof.
\end{proof}

\begin{prop}\label{prop:Dhat_formula}
  We have
  \[
  \hat{D}= m\left\{\frac{S}{m}\right\}\left(1-\left\{\frac{S}{m}\right\}\right)+\frac{S^2}{m}+m d_0^2-2d_0 S
  \]
\end{prop}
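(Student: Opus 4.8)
The plan is to compute $\hat{D}=\|\hat{\us}-\hat{\uv}_i\|^2$ directly from the explicit description of the closest points $\hat{\uv}_i = r(\hat{\up})+\Delta_i$ given in the preceding Proposition, expand the square, and then substitute the arithmetic identities from Lemma \ref{lem:delta}. First I would write $\hat{\us}-\hat{\uv}_i = (\hat{\us}-\hat{\up}) + (\hat{\up}-r(\hat{\up})) - \Delta_i$ and expand $\|\hat{\us}-\hat{\uv}_i\|^2$ into the nine resulting inner products. The first vector $\hat{\us}-\hat{\up} = -\left(\frac{d'_0-\sum_j\hat{\us}_j}{m}\right)(1,\ldots,1)$ is a multiple of the all-ones vector and hence is orthogonal to the hyperplane $\{\sum x_i = d'_0\}$; since both $\hat{\us}-r(\hat{\up})-\Delta_i$ and (after summing) $\hat{\up}-r(\hat{\up})$ lie in directions parallel to that hyperplane once we account for the $\delta$-correction, the cross terms involving $\hat{\us}-\hat{\up}$ will simplify. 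Concretely, the Pythagorean decomposition along and perpendicular to the all-ones direction gives
\[
\hat{D} = \|\hat{\us}-\hat{\up}\|^2 + \|\hat{\up} - \hat{\uv}_i\|^2,
\]
since $\hat{\up}$ is the orthogonal projection of $\hat{\us}$ onto the hyperplane containing all the $\hat{\uv}_i$.

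Next I would evaluate the two pieces. For the perpendicular part, $\|\hat{\us}-\hat{\up}\|^2 = m\left(\frac{d'_0-\sum_{j=1}^m\hat{\us}_j}{m}\right)^2$; using $\hat{\us}_j = d'_0-d'_j$ we get $\sum_{j=1}^m \hat{\us}_j = md'_0 - (S-d'_0) = (m+1)d'_0 - S$, so $d'_0 - \sum_j \hat{\us}_j = S - md'_0$ and this term equals $\frac{(S-md'_0)^2}{m} = \frac{S^2}{m} - 2d'_0 S + m d_0^2$, which accounts for the last three summands in the claimed formula. For the in-hyperplane part, $\|\hat{\up}-\hat{\uv}_i\|^2 = \|(\hat{\up}-r(\hat{\up})) - \Delta_i\|^2$; writing $\hat{\up}_i - r(\hat{\up}_i)$ as a fractional offset of absolute value at most $\tfrac12$ and using that $\Delta_i$ has $|\delta|$ coordinates equal to $\epsilon=\mathrm{sgn}(\delta)$ and the rest $0$, a coordinatewise bookkeeping together with the identity $\sum_i (\hat{p}_i - r(\hat{p}_i)) = \delta$ from \eqref{eqn:delta} reduces this to the one-variable expression $\tfrac{|\delta|}{m}(m-|\delta|)\cdot\tfrac1m$... more precisely I expect it to collapse to $\frac{|\delta|(m-|\delta|)}{m}$, and then Lemma \ref{lem:delta} (which gives $|\delta| = m\{S/m\}$ when $\delta\geq 0$ and $|\delta| = m - m\{S/m\}$ when $\delta<0$, so that in both cases $|\delta|(m-|\delta|) = m^2\{S/m\}(1-\{S/m\})$) turns this into $m\left\{\tfrac{S}{m}\right\}\left(1-\left\{\tfrac{S}{m}\right\}\right)$, matching the remaining term.

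The main obstacle I anticipate is the second computation: verifying carefully that $\|(\hat{\up}-r(\hat{\up})) - \Delta_i\|^2$ is independent of $i$ (which it must be, by Definition \ref{def:d3}) and equals exactly $\frac{|\delta|(m-|\delta|)}{m}$. This requires knowing not just $\sum_i(\hat{p}_i - r(\hat{p}_i)) = \delta$ but that each individual offset $\hat{p}_i - r(\hat{p}_i)$ is the \emph{same} value $\delta/m$ across all $i$ — which holds here because $\hat{p}_i = d'_i - S/m$ differs from an integer by $-S/m$ up to an integer, so all fractional parts coincide. Once that uniformity is in hand, the expansion $\sum_i (\delta/m - (\Delta_i)_j)^2 = m(\delta/m)^2 - 2(\delta/m)\sum_i (\Delta_i)_j + |\delta| = \delta^2/m - 2\delta^2/m + |\delta| = |\delta| - \delta^2/m = \frac{|\delta|(m-|\delta|)}{m}$ is immediate, and combining with the case analysis of Lemma \ref{lem:delta} finishes the proof.
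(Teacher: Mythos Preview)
Your proposal is correct. It takes a somewhat different route from the paper's proof, so a brief comparison is in order.

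The paper computes $\|\hat{\us}-\hat{\uv}_1\|^2$ directly, coordinate by coordinate: since each entry of $\hat{\us}-\hat{\uv}_1$ equals $d'_0-\lfloor S/m+\tfrac12\rfloor$ or $d'_0-\lfloor S/m+\tfrac12\rfloor-1$ (when $\delta\geq 0$), the sum of squares is $\delta(d'_0-\lfloor S/m+\tfrac12\rfloor-1)^2+(m-\delta)(d'_0-\lfloor S/m+\tfrac12\rfloor)^2$; then Lemma~\ref{lem:delta} is substituted and the expression is expanded. The case $\delta<0$ is declared analogous.

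Your argument instead uses the Pythagorean split $\hat{D}=\|\hat{\us}-\hat{\up}\|^2+\|\hat{\up}-\hat{\uv}_i\|^2$, which cleanly separates the ``continuous'' contribution $\tfrac{(S-md'_0)^2}{m}=\tfrac{S^2}{m}+md_0^{\prime 2}-2d'_0S$ from the purely arithmetic contribution $\tfrac{|\delta|(m-|\delta|)}{m}=m\{S/m\}(1-\{S/m\})$. The key observation you isolate---that every coordinate of $\hat{\up}-r(\hat{\up})$ equals the common value $\delta/m$ because each $\hat{p}_i$ differs from $S/m$ by an integer---is what makes the second term collapse so neatly, and it also handles both signs of $\delta$ at once via $\delta^2=|\delta|^2$. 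This is arguably a bit more transparent than the paper's direct expansion, though both are short. One cosmetic point: in your final display you reuse $i$ as both the label of $\hat{\uv}_i$ and the summation index; writing $\sum_j(\delta/m-(\Delta_i)_j)^2$ throughout avoids confusion.
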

\begin{proof}
    We only treat the case $\delta\geq 0$, the other one is analoguous. We have
    \begin{eqnarray*}
        \hat{D} &= &\| \hat{\us}-\hat{\uv}_1 \|^2 \\
        &= & \delta \left(d_0-\left\lfloor \frac{S}{m}+\frac{1}{2}\right\rfloor-1\right)^2+(m-\delta)\left(d_0-\left\lfloor \frac{S}{m}+\frac{1}{2}\right\rfloor\right)^2 \\
        &=& m\left\{\frac{S}{m}\right\}\left(d_0-\frac{S}{m}+\left\{\frac{S}{m}\right\}-1\right)^2+m\left(1-\left\{\frac{S}{m}\right\}\right) \left(d_0-\frac{S}{m}+\left\{\frac{S}{m}\right\}\right)^2 \\
        &=& m\left\{\frac{S}{m}\right\}\left(1-\left\{\frac{S}{m}\right\}\right)+\frac{S^2}{m}+m d_0^2-2d_0 S
    \end{eqnarray*}
where we have used Lemma \ref{lem:delta} in the third equality.
\end{proof}

We have finally arrived at the explicit formulas.

\begin{thm} \label{thm:main-codim}
Let $\ud\in \N^{N+1}$ and $0\leq r\leq \min \ud$. As above, let $\ud'=(d'_0\leq d'_1 \leq \ldots\leq d'_N)\in \N^{N+1}$ be the weakly increasing reordering of $\ud$ and let $m$ be defined as in Definition~\ref{def:d1}. Finally, let $S=\sum_{i=0}^{m}d'_i$ and $\tilde{S}=S-(m+1)r$. We have
\[
    \codim_{\Rep_{\ud}}  \cSigma^r_{\ud} = \frac{m}{2}\left\{\frac{\tilde{S}}{m} \right\}\left(1-\left\{\frac{\tilde{S}}{m} \right\}\right) - \frac{m(m-1)}{2}\left(\frac{\tilde{S}}{m}\right)^2 + \sum_{0\leq i<j\leq m}(d'_i-r)(d'_j-r)
\] 
Let $B\in\Mat_{d_N,d_0}^{\rk=r}$. Then we have
\begin{multline}\label{eqn:codim_fiber}
\codim_{\Rep_{\ud}} \mult^{-1}(B) =  
\frac{m}{2}\left\{\frac{\tilde{S}}{m} \right\}\left(1-\left\{\frac{\tilde{S}}{m} \right\}\right) - \frac{m(m-1)}{2}\left(\frac{\tilde{S}}{m}\right)^2 
\\
+ \sum_{0\leq i<j\leq m}(d'_i-r)(d'_j-r) + r(d_0+d_N-r).
\end{multline}
Both $\cSigma^r_{\ud}$ and $\mult^{-1}(B)$ have the same number $\binom{m}{\tilde{S}-m\left\lfloor \frac{\tilde{S}}{m}+\frac{1}{2}\right\rfloor}$ of top-dimensional irreducible components.
\end{thm}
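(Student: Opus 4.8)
The plan is to bootstrap from the three ingredients already established: the reduction lemmas of Section~\ref{sec:mult}, the closest-point reformulation of the quadratic integer program (Theorem~\ref{thm:QIPreduced2closestpoint}), and the explicit evaluation of $\hat D$ and of the number of minimizers (Proposition~\ref{prop:Dhat_formula} and equation~\eqref{eqn:number_top_comp}).

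\emph{Step 1 (reduction to $r=0$).} By Lemma~\ref{lem:rank_0} we have $\codim_{\Rep_{\ud}}\cSigma^r_{\ud} = \codim_{\Rep_{\ud}}\Sigma^r_{\ud} = \codim_{\Rep_{\ud-r}}\Sigma^0_{\ud-r}$, and the explicit embedding in that lemma puts the irreducible components of $\cSigma^r_{\ud}$ in codimension-preserving bijection with those of $\Sigma^0_{\ud-r}$, so the two have the same number of top-dimensional components. For the fibre, Lemma~\ref{lem:rank_vs_fibers} gives a bijection between the irreducible components of $\Sigma^r_{\ud}$ and of $\mult^{-1}(B)$ under which codimension increases by the fixed constant $r(d_0+d_N-r)$; together with Corollary~\ref{cor:irred_comp}(a) this reduces the fibre statement to the $\cSigma^r_{\ud}$ statement plus this additive shift. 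Hence it suffices to compute the codimension and the number of top-dimensional components of $\Sigma^0_{\ud-r}$, whose weakly increasing reordering is $w:=\ud'-r=(d'_0-r\le\cdots\le d'_N-r)$ with $\min w=\min\ud-r\ge 0$.

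\emph{Step 2 (apply the closest-point formula and simplify).} Apply Theorem~\ref{thm:QIP}, Theorem~\ref{thm:QIPreduced2closestpoint}, Proposition~\ref{prop:Dhat_formula} and \eqref{eqn:number_top_comp} to $\ud-r$ (whose reordering is $w$). Let $m_w$ be the integer of Definition~\ref{def:d1} for $w$ and $T:=\sum_{i=0}^{m_w}(d'_i-r)$. These results give
\[
\codim_{\Rep_{\ud-r}}\Sigma^0_{\ud-r}=\tfrac12\Bigl(\hat D+(d'_0-r)^2-\sum_{i=1}^{m_w}(d'_i-d'_0)^2\Bigr),\qquad \hat D=m_w\Bigl\{\tfrac{T}{m_w}\Bigr\}\Bigl(1-\Bigl\{\tfrac{T}{m_w}\Bigr\}\Bigr)+\tfrac{T^2}{m_w}+m_w(d'_0-r)^2-2(d'_0-r)T,
\]
together with $\binom{m_w}{\,T-m_w\lfloor T/m_w+\tfrac12\rfloor\,}$ top-dimensional components. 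Substituting $\hat D$, expanding $\sum_{i=1}^{m_w}(d'_i-d'_0)^2$, and using $\sum_{0\le i<j\le m_w}(d'_i-r)(d'_j-r)=\tfrac12\bigl(T^2-\sum_{i=0}^{m_w}(d'_i-r)^2\bigr)$ is a routine rearrangement which turns the codimension into
\[
\tfrac{m_w}{2}\Bigl\{\tfrac{T}{m_w}\Bigr\}\Bigl(1-\Bigl\{\tfrac{T}{m_w}\Bigr\}\Bigr)-\tfrac{m_w(m_w-1)}{2}\Bigl(\tfrac{T}{m_w}\Bigr)^2+\sum_{0\le i<j\le m_w}(d'_i-r)(d'_j-r),
\]
i.e. the claimed formula with $(m_w,T)$ in the roles of $(m,\tilde S)$.

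\emph{Step 3 (replacing $m_w$ by $m$).} It remains to pass from $m_w$ (the Definition~\ref{def:d1} cutoff of $w=\ud'-r$) to $m$ (the $r$-independent Definition~\ref{def:d1} cutoff of $\ud'$); once this is done, $T=\sum_{i=0}^m(d'_i-r)=S-(m+1)r=\tilde S$, the displays of Step~2 become the asserted formula and component count, and adding $r(d_0+d_N-r)$ yields \eqref{eqn:codim_fiber}. Since $l\mapsto\sum_{i=0}^l d'_i-ld'_l$ is weakly decreasing (the Lemma preceding Definition~\ref{def:d1}), we have $m_w\le m$, with $m_w<l\le m$ precisely for the indices where $0\le\sum_{i=0}^l d'_i-ld'_l<r$. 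I expect this to be the crux of the argument: one must check that enlarging the ambient dimension of the simplex from $m_w$ to $m$ leaves both the minimal squared distance $\hat D$ and the number of closest lattice points unchanged. The key observation is that the last inequality forces $e_l=0$ for $m_w<l\le m$ in every optimal solution $\ue$ of the quadratic integer program for $\ud-r$, so all closest lattice points lie in the coordinate face where the final $m-m_w$ entries vanish; one then verifies that the rounding construction of Theorem~\ref{thm:QIPreduced2closestpoint} and Proposition~\ref{prop:Dhat_formula}, run in ambient dimension $m$, reproduces exactly these points and only these. Granting this, the theorem follows.
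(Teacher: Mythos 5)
Your Steps 1 and 2 reproduce the paper's route (reduce to $r=0$ via Lemmas~\ref{lem:rank_0} and~\ref{lem:rank_vs_fibers}, then invoke Theorem~\ref{thm:QIPreduced2closestpoint}, Proposition~\ref{prop:Dhat_formula}, and equation~\eqref{eqn:number_top_comp}), and the algebraic rearrangement in Step 2 is correct. You have also put your finger on a genuine subtlety in Step 3 that the paper passes over in silence: after the reduction, the Definition~\ref{def:d1} cutoff that naturally appears is $m_w$, computed from $\ud'-r$, and not the $m$ computed from $\ud'$ that the theorem statement uses. These can differ: $m_w$ is the largest $l$ with $\sum_{i=0}^l d'_i - l d'_l \ge r$, whereas $m$ uses $\ge 0$.

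However, the resolution you propose for Step 3 does not work, and one should not ``grant'' it. Running the rounding construction of Theorem~\ref{thm:QIPreduced2closestpoint} in ambient dimension $m > m_w$ produces \emph{all} closest lattice points on the hyperplane $\{\sum_{i=1}^m e_i = d'_0-r\}$ in $\R^m$, and when $m_w<m$ some of these necessarily have a negative coordinate in the range $m_w<l\le m$ and therefore are not optimal solutions of \eqref{eq:QIP}. A concrete instance: take $\ud=(3,3,4,5)$ and $r=2$. Then $m=3$ (because $\sum_{i=0}^{3}d'_i=15=3d'_3$), $S=15$, $\tilde S = 7$, and the theorem's count is $\binom{3}{\,7-3\lfloor 7/3+1/2\rfloor\,}=\binom{3}{1}=3$. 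But for $w=\ud'-r=(1,1,2,3)$ one has $m_w=2$, and the quadratic program for $w$ has the unique minimizer $\ue=(1,0,0)$ with value $1$; equivalently, the Poincar\'e series gives
\[
Q^2_{(3,3,4,5)} \;=\; \CP_1^2\CP_2^2\bigl(\CP_3-1\bigr)\;=\;q + 6q^2 + \cdots,
\]
so $\theta=1$, not $3$. In $\R^3$ the three hyperplane-closest lattice points to $\hat{\us}_w=(0,-1,-2)$ are $(1,0,0)$, $(2,0,-1)$, $(1,1,-1)$, all at squared distance $6$, but only the first lies in the simplex $\{e_i\ge 0\}$. Thus the statement of Theorem~\ref{thm:main-codim} is in error for the number of components whenever $m_w<m$, i.e.\ whenever $0\le \sum_{i=0}^{l}d'_i-ld'_l<r$ for some $l\le m$, and your Step 3 is reaching for an identity that is false.

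The codimension formula, by contrast, is insensitive to the choice of ambient dimension, because $\tfrac12\bigl(\hat D + (d'_0-r)^2 - \sum_{i=1}^{m}(d'_i-d'_0)^2\bigr)=\tfrac12\bigl(D + (d'_0-r)^2 - \sum_{i=1}^{N}(d'_i-d'_0)^2\bigr)$ where $D$ is the intrinsic minimal squared distance to the simplex; this is why the codimension part of the theorem (and hence the rlct statement in Section~\ref{sec:RLCT}) survives. The correct reading of the component-count in Theorem~\ref{thm:main-codim} is to replace $m$ by the Definition~\ref{def:d1} cutoff applied to $\ud'-r$ and replace $\tilde S$ accordingly by $\sum_{i=0}^{m_w}(d'_i-r)$; once this is done, Step 3 becomes vacuous and the proof reduces to your Steps 1--2.
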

\begin{proof}
By Lemmas \ref{lem:rank_0} and \ref{lem:rank_vs_fibers}, it suffices to prove the common case $r=0$ of the claims above, namely
\[
    \codim \Sigma^0_{\ud} = \frac{m}{2}\left\{\frac{S}{m} \right\}\left(1-\left\{\frac{S}{m} \right\}\right) - \frac{m(m-1)}{2}\left(\frac{S}{m}\right)^2 + \sum_{0\leq i<j\leq m}d'_i d'_j
\]
which follows from Theorem \ref{thm:QIPreduced2closestpoint}, Proposition \ref{prop:Dhat_formula} and an elementary computation, and
\[
k=\binom{m}{S-m\left\lfloor \frac{S}{m}+\frac{1}{2}\right\rfloor}.
\]
which is equation \eqref{eqn:number_top_comp}.
\end{proof}

\begin{remark} \rm
    It is not immediate from the formulas that those codimensions are integers. However one can easily show that, for any positive integers $\tilde{S},m$, the quantity $\frac{m}{2}\left\{\frac{\tilde{S}}{m} \right\}\left(1-\left\{\frac{\tilde{S}}{m} \right\}\right) - \frac{m(m-1)}{2}\left(\frac{\tilde{S}}{m}\right)^2$ is an integer. It is also possible to rewrite things to make this apparent, but the resulting formula is a bit more complicated and we omit it.
\end{remark}

\begin{example} \rm
    Let $\ud=(d,d,\ldots,d)\in \N^{N+1}$ be a constant dimension vector and $r=0$. We have $m=N$ and $\tilde{S}=S=(N+1)d$. Theorem~\ref{thm:main-codim} then implies
    \[
    \codim \Sigma^0_{\ud} = \frac{N}{2}\left\{\frac{d}{N} \right\}\left(1-\left\{\frac{d}{N} \right\}\right) + \frac{(N+1)}{N}\frac{d^2}{2}.
    \]
   When $N>d$, we have $\left\{\frac{d}{N} \right\}=\frac{d}{N}$ so that
   \[
     \codim \Sigma^0_{\ud}=\frac{d(d+1)}{2}.
   \]
   which is in particular independent of $N$. On the other hand, for fixed $N$ and $d\to\infty$, we see that
   \[
   \codim \Sigma^0_{\ud} \sim_{d\to\infty} \frac{(N+1)}{N}\frac{d^2}{2}.
   \]
    Informally we see in both cases that the magnitude of $\codim \Sigma_{\ud}^0$ is not too sensitive to $N$ (``the depth''), only to $d$ (``the width'') of the quiver (``network''), cf. Section \ref{sec:RLCT}.
\end{example}

\section{Real-log canonical threshold of deep linear networks} 
\label{sec:RLCT}

The initial motivation of this work was to better understand the work of Aoyagi \cite{aoyagi} and to relate it to the theory of quiver representations. Aoyagi's work takes place in the context of \emph{singular learning theory}, the Bayesian statistics of singular statistical models \cite{watanabe2009,watanabe2018,watanabe2024recent}. In singular learning theory, the \emph{real log-canonical threshold (rlct)}, a geometric invariant of singularities of real analytic functions, plays a central role because it controls the asymptotic performance of Bayesian inference.

\begin{defn}\label{defn:rlct}
Let $X$ be a real analytic manifold and $F:X\to \R$ a real analytic function.
\begin{enumerate}[label=(\roman*)]
        \item The \emph{real log canonical threshold} (RLCT) of $F$ is
        \[
        \rlct(F):=\sup \{s\in \R\ |\ |F|^{-s}\text{ is locally integrable }\}\in \R\cup\{\infty\}
        \]
        \item Let $x\in X$. The \emph{local real log canonical threshold} of $F$ at $x$ is
         \[
        \rlct_x(F):=\sup \{s\in \R\ |\ |F|^{-s}\text{ is locally integrable at $x$}\}\in \R\cup\{\infty\}.
        \]
    \end{enumerate}
\end{defn}

The rlct comes with a secondary invariant, the \emph{real log-canonical multiplicity} (rlcm), which we only define in the local case for simplicity. This is an opportunity to introduce archimedean zeta functions, which play an important role in singularity theory and singular learning theory.

\begin{propdefn}
     Let $X$ be a real analytic manifold, $F:X\to \R$ a real analytic function and $x\in X$. Fix a volume form $\mathrm{dvol}$ on $X$ and a relatively compact open neighbourhood $U$ of $x$ The \emph{local archimedean zeta function}
     \[
     \zeta_{F,U}(s):= \int_U |F|^s \mathrm{dvol}
     \]
     which is a priori defined for $\mathrm{Re}(s)\gg 0$ extends to a meromorphic function. The poles of $ \zeta_{F}(s)$ are independent of $U$ small enough and are negative real numbers. The largest pole is $-\rlct_x(F)$, and we define the \emph{real log-canonical multiplicity} $\rlcm_x(F)\in \N$ to be the order of that pole.
\end{propdefn}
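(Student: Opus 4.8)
\ This is a classical statement in the singularity theory of real-analytic functions, and the plan is to prove it by Hironaka's resolution of singularities, following the classical arguments of Atiyah and Bernstein--Gel'fand: reduce to explicit monomial integrals, then analyze those by a one-variable Mellin computation. We may assume $F(x)=0$ and $F\not\equiv 0$ near $x$; if $F(x)\ne 0$ then $\rlct_x(F)=+\infty$, and for $U$ small enough $F$ does not vanish on $\overline U$, so $\zeta_{F,U}$ is entire and there is nothing to prove. For convergence: since $U$ is relatively compact, $M:=\sup_{\overline U}|F|<\infty$, and for $\mathrm{Re}(s)\ge 0$ the modulus of the integrand is $|F|^{\mathrm{Re}(s)}\le\max(1,M^{\mathrm{Re}(s)})$, so the integral converges absolutely and locally uniformly; thus $\zeta_{F,U}$ is holomorphic on $\{\mathrm{Re}(s)>0\}$. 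For real $s$ the integrand $|F|^s$ is positive, and a standard convexity (H\"older) argument shows $I_U:=\{s\in\R:\zeta_{F,U}(s)<\infty\}$ is an interval unbounded to the right; write $\sigma(U)\in\R\cup\{-\infty\}$ for its left endpoint.

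For the meromorphic continuation: after shrinking $U$, apply resolution of singularities to $F$ near $\overline U$, obtaining a proper real-analytic surjection $\pi\colon\widetilde U\to U$ that is an isomorphism over $U\setminus F^{-1}(0)$ and such that near each point of $\widetilde U$ there are coordinates $y=(y_1,\dots,y_n)$, with $n=\dim X$, in which $F\circ\pi=\varepsilon(y)\prod_i y_i^{a_i}$ and $\pi^*(\mathrm{dvol})=\eta(y)\bigl(\prod_i y_i^{b_i}\bigr)\,dy$, where $a_i,b_i\in\N$ and $\varepsilon,\eta$ are real-analytic and nowhere zero on the chart. Using a finite subcover and a subordinate partition of unity $\{\rho_\alpha\}$, $\zeta_{F,U}(s)$ becomes a finite sum of integrals $\int\rho_\alpha\,|\varepsilon|^s|\eta|\,\prod_i|y_i|^{a_is+b_i}\,dy$. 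Taylor-expanding the smooth factor $\rho_\alpha|\varepsilon|^s|\eta|$ in each $y_i$, with remainders controlled uniformly on vertical strips, and invoking the elementary fact that for $\psi\in C^\infty_c(\R)$ the integral $\int_\R\psi(t)\,|t|^\lambda\,dt$ extends meromorphically in $\lambda$ with at most simple poles at $\lambda\in\{-1,-2,\dots\}$, one obtains variable by variable the meromorphic continuation of each summand, hence of $\zeta_{F,U}$, to all of $\C$, with poles contained in the discrete set of negative rationals $\bigl\{-(b_i+k)/a_i: a_i\ge 1,\ k\in\Z_{\ge 1}\bigr\}$ and of order at most $n$. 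This already yields meromorphy, the fact that all poles are negative real numbers, and that the order $\rlcm_x(F)$ of the largest pole is a well-defined element of $\{1,\dots,n\}\subset\N$.

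It remains to identify the largest pole and to prove independence of $U$, which I expect to be the delicate point. From the chart computation, $\sigma(U)=\max\bigl\{-(b_i+1)/a_i\bigr\}$ over those exceptional components $\{y_i=0\}$ that meet $\overline{\pi^{-1}(U)}$; for $U$ small this maximum is attained among the components through $\pi^{-1}(x)$ and is therefore unaffected by further shrinking, and running the same computation on a small ball about $x$ identifies $\sigma(U)$ with $-\rlct_x(F)$ directly from the definition $\rlct_x(F)=\sup\{\lambda:|F|^{-\lambda}\text{ locally integrable at }x\}$ (in particular $\sigma(U)\in\R$, since $F$ vanishes at $x$). That $\sigma(U)$ is a genuine pole and not a removable point is a positivity phenomenon: pushing $\mathrm{dvol}|_U$ forward along $|F|$ yields a positive Borel measure $\mu$ on $(0,M]$ with $\zeta_{F,U}(s)=\int_0^\infty t^s\,d\mu(t)$, a Mellin transform of a positive measure, so by Landau's theorem its abscissa of convergence $\sigma(U)$ is a singularity of the function it defines, hence a pole of the meromorphic continuation; and since $\zeta_{F,U}$ is holomorphic on $\{\mathrm{Re}(s)>\sigma(U)\}$, this pole is the largest one. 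This establishes the displayed claims.

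The main obstacle is the remaining part of the independence statement: that for $U$ small the \emph{entire} set of poles, not merely the largest, stabilises. The intended mechanism is that once $U$ is small enough only the exceptional components of $\pi$ meeting $\pi^{-1}(x)$ can contribute, and each residue $\mathrm{Res}_{s=c}\zeta_{F,U}$ is then computed purely from the resolution data over a fixed neighbourhood of $\pi^{-1}(x)$, so that shrinking $U$ further affects neither the surviving components nor these residues. Making this precise requires careful control of possible cancellations between the contributions of different resolution charts at a common candidate pole; this bookkeeping is the technical heart, and is where one either carries out the combinatorics explicitly or invokes the classical literature (Atiyah, \emph{Resolution of singularities and division of distributions}, 1970; Bernstein--Gel'fand, 1969; Igusa's book on local zeta functions; and, for the singular-learning formulation, Watanabe \cite{watanabe2018}).
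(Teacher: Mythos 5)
The paper offers no proof of this proposition beyond a one-line citation to Atiyah and the remark that it rests on real-analytic resolution of singularities; your proposal is a faithful and correct expansion of exactly that classical route (Atiyah, Bernstein--Gel'fand, Igusa), so in that sense you and the paper take the same approach. Your structure is sound: reduce to $F(x)=0$, establish holomorphy on a right half-plane, resolve singularities to put $F\circ\pi$ and $\pi^*\mathrm{dvol}$ into monomial normal form, use a partition of unity and the one-variable continuation of $\int\psi(t)|t|^{\lambda}\,dt$ to continue meromorphically with poles in the negative rationals $-(b_i+k)/a_i$, and then identify the abscissa of convergence with $-\rlct_x(F)$ and show it is a genuine pole via the Landau--Pringsheim positivity argument for Mellin transforms of positive measures. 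That last step is a nice touch and exactly the right tool to rule out cancellation at the leading pole.

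You are also right to flag where the work actually lies: the statement that the \emph{entire} pole set is independent of small $U$ is the delicate point, and your appeal to positivity only controls the largest pole (Landau gives a singularity at the abscissa, but says nothing about residues at smaller real parts, which can cancel between charts). One small thing your write-up leaves implicit is that, for the proposition to define $\rlcm_x(F)$, one also needs the \emph{order} of the pole at $-\rlct_x(F)$ to stabilise as $U$ shrinks; this again follows from a positivity argument applied to the leading Laurent coefficient (it is, up to a universal positive constant, an integral of a strictly positive density over the locus in $\pi^{-1}(x)$ realising the minimum of $(b_i+1)/a_i$), but you should say so explicitly rather than fold it into the general ``poles stabilise'' claim. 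With that caveat, your plan is correct, and for the full bookkeeping it is reasonable to cite the literature as you do; the cited references and the sources the paper itself lists (\cite{atiyah}, \cite{lin:phd_thesis}, \cite{watanabe2018}) contain the missing combinatorics.
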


This proposition goes back to Atiyah \cite{atiyah} and the proof is based on (real analytic) resolution of singularities.

\begin{remark}\label{rmk:rlct_vs_lct}
As the name suggests, the rlct and the rlcm are the real counterparts of the \emph{log-canonical threshold} (lct) and the associated multiplicity in complex (algebraic and analytic) geometry. The lct plays an important role in complex singularity theory and birational geometry, and has been studied much more intensively than its real counterpart (see \cite{mustata:lct-survey,kollar:lct-survey}.
\end{remark}

Here are some fundamental properties of the rlct which we need to put our results in context. We refer to \cite{lin:phd_thesis} and to the upcoming paper \cite{spl:real-jets} for a comprehensive treatment.

\begin{prop}\label{prop:rlct_elem}
    Let $X$ be a real analytic manifold, $F:X\to \R$ a real analytic function and $x\in X$. 
\begin{enumerate}[label=(\roman*)]
\item $0<\rlct_x(F)<\infty\ \Leftrightarrow\ F(x)=0$ and $\rlct(F)<\infty\ \Leftrightarrow\ F^{-1}(0)\neq\emptyset$. 

Moreover, when $F(x)=0$, we have
\begin{equation}\label{eqn:rlct_upper_bound}
    \rlct_{x}(F)\in \left(0, \frac{\codim_{X,x} F^{-1}(0)}{2}\right]\cap \Q.
\end{equation}
\item The function $x\mapsto\rlct_{X,x}(F)$ is lower semi-continuous.
\item \[
\rlct(F)=\inf_{x\in X}\rlct_{x}(F)
\]
This inf is not always attained in the general real analytic case because of potential issues ``at infinity", but it is in many cases f.(e.g., when $X$ is compact or when $X,F$ are algebraic). When it is attained, we have 
\begin{equation}\label{eqn:rlct_upper_bound_glob}
    \rlct(F)\in \left(0, \frac{\codim_X F^{-1}(0)}{2}\right]\cap \Q.
\end{equation}
\item\label{thom-seb} Let $G:X\to \R$ be another real analytic function. Then we can consider $F+G$ and $F\cdot G$ as functions on $X\times Y$. Then
\begin{eqnarray*}
\rlct(FG)  =& \min(\rlct(F),\rlct(G))\quad \text{ and }
\\ \rlcm(FG) =&\left\{\begin{array}{cl}
  \rlcm(F)   & \text{if}\ \rlct(F)<\rlct(G) \\
   \rlcm(G)  & \text{if}\ \rlct(F)>\rlct(G) \\
   \rlcm(F)+\rlcm(G)  & \text{if}\ \rlct(F)=\rlct(G).
\end{array}\right.
\end{eqnarray*}
Assume $F,G\geq 0$. Then
\[
\rlct(F+G)=\rlct(F)+\rlct(G)\quad \text{ and }\quad \rlcm(F+G)=\rlcm(F)+\rlcm(G)-1
\]
\end{enumerate}
\end{prop}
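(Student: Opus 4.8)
The plan is to reduce all four parts to a single black box --- \emph{real-analytic resolution of singularities} (Hironaka), already used for the preceding Proposition-Definition --- together with the elementary model integral $\int_0^1 t^{as+b}\,dt=(as+b+1)^{-1}$. Fix $x$ with $F(x)=0$ and a relatively compact neighbourhood $U$, and choose a proper modification $\pi\colon Y\to X$, an isomorphism over $X\setminus F^{-1}(0)$, such that $\pi^{*}F$ and $\pi^{*}(\mathrm{dvol})$ are simultaneously monomial in suitable local coordinates on $Y$. Covering the compact set $\pi^{-1}(\overline U)$ by finitely many such charts with a subordinate partition of unity, $\zeta_{F,U}(s)=\int_U|F|^s\,\mathrm{dvol}$ becomes a finite sum of integrals $\int_{[0,1]^n}\prod_j|y_j|^{a_js+b_j}\,\varphi(y)\,dy$ with $\varphi$ smooth, each of which is $\prod_j(a_js+b_j+1)^{-1}$ times an entire factor; here $a_j\ge 1$ and $b_j\ge 0$, so every pole is a negative rational $-(b_j+1)/a_j$, and the largest one is genuine because $\zeta_{F,U}(s)$ is a positive function of real $s$ that diverges as $s$ decreases to it. This proves the Proposition-Definition, and reading off the largest pole over the charts meeting $\pi^{-1}(x)$ gives $\rlct_x(F)=\min_j(b_j+1)/a_j\in\Q_{>0}$ --- finite because $F(x)=0$ forces a divisor with $a_j\ge 1$ through $\pi^{-1}(x)$, while if $F(x)\neq0$ then $|F|^{-s}$ is bounded near $x$ and $\rlct_x(F)=\infty$. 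Letting $x$ vary in this local picture yields the lower semicontinuity of $x\mapsto\rlct_x(F)$ in (ii), and the identity $\rlct(F)=\inf_x\rlct_x(F)$ in (iii), the infimum being attained whenever finitely many charts cover $Y$ globally (e.g. $X$ compact, or $X,F$ algebraic).

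For the upper bound \eqref{eqn:rlct_upper_bound} I would localise at smooth points $x_0$ of the real-analytic set $F^{-1}(0)$ accumulating at $x$ --- these exist on a component through $x$ of minimal codimension $c:=\codim_{X,x}F^{-1}(0)$ --- and choose coordinates with $F^{-1}(0)=\{x_1=\cdots=x_c=0\}$ near $x_0$. As $F$ vanishes to order $\ge 2$ transverse to $F^{-1}(0)$ --- automatic in the applications of interest, where $F\ge 0$, e.g. $F=K^{\DLN}_B$ --- we have $|F(x)|\le C(x_1^2+\cdots+x_c^2)$ on a neighbourhood, hence $|F|^{-s}\ge C^{-s}(x_1^2+\cdots+x_c^2)^{-s}$, and the integral of the right-hand side over a small ball in the $(x_1,\dots,x_c)$-coordinates diverges exactly for $s\ge c/2$. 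Thus $\rlct_{x_0}(F)\le c/2$, and lower semicontinuity (ii) then forces $\rlct_x(F)\le c/2$; together with (iii) this also gives \eqref{eqn:rlct_upper_bound_glob}. This is the one place where the sign hypothesis on $F$ genuinely enters.

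Part (iv) has a formal half and a substantial half. The multiplicative formulas follow from Fubini: $\zeta_{FG,\,U\times V}(s)=\zeta_{F,U}(s)\,\zeta_{G,V}(s)$, so the poles of the product are the union of the two pole sets, the largest being $-\min(\rlct(F),\rlct(G))$; each factor is an integral of a positive function, hence non-zero wherever holomorphic, so the order of the pole of the product at its largest pole is $\rlcm(F)$, $\rlcm(G)$ or $\rlcm(F)+\rlcm(G)$ in the three listed cases. For the additive formula with $F,G\ge 0$ I would use $(F+G)^{-s}=\Gamma(s)^{-1}\int_0^\infty t^{s-1}e^{-(F+G)t}\,dt$ and Fubini to write, for $\mathrm{Re}(s)\gg 0$,
\[
\int_{U\times V}(F+G)^{-s}\,\mathrm{dvol}=\frac{1}{\Gamma(s)}\int_0^\infty t^{s-1}\,L_F(t)\,L_G(t)\,dt,\qquad L_F(t):=\int_U e^{-Ft}\,\mathrm{dvol},
\]
and then invoke the Tauberian dictionary between the poles of an archimedean zeta function and the large-$t$ behaviour of its Laplace transform: $L_F(t)\sim c_F\,t^{-\rlct(F)}(\log t)^{\rlcm(F)-1}$, and similarly for $G$. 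Multiplying the asymptotics, $L_F(t)L_G(t)\sim c_Fc_G\,t^{-(\rlct(F)+\rlct(G))}(\log t)^{\rlcm(F)+\rlcm(G)-2}$, and running the Mellin transform backwards shows the continuation of $\int_{U\times V}(F+G)^{-s}\,\mathrm{dvol}$ has largest pole $-(\rlct(F)+\rlct(G))$, of order $\rlcm(F)+\rlcm(G)-1$, as claimed.

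The main obstacle is this last, additive Thom--Sebastiani step. Resolution of singularities is taken as given; parts (i)--(iii) and the multiplicative formulas are then essentially bookkeeping on top of it; but the two-way passage between meromorphic continuation and Laplace-transform asymptotics --- and in particular pinning down the exact \emph{order} of the resulting pole --- needs a careful Tauberian argument. (A direct alternative would feed resolutions of $F$ and $G$ simultaneously into $\int(F+G)^{-s}$ and analyse the resulting Newton-polyhedron-type integral, but that looks more painful.)
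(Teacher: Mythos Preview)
The paper does not prove this proposition: immediately before the statement it says ``We refer to \cite{lin:phd_thesis} and to the upcoming paper \cite{spl:real-jets} for a comprehensive treatment,'' and no argument is given. So there is nothing to compare your proposal against; you have supplied a proof where the paper only supplies citations.

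Your sketch is the standard one and is essentially correct. A few remarks:

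\begin{itemize}
\item You are right to flag the sign hypothesis in the proof of the upper bound $\rlct_x(F)\le\codim_{X,x}F^{-1}(0)/2$. As stated in the paper (for arbitrary real analytic $F$), the bound is in fact \emph{false}: already $F(x)=x$ on $\R$ has $\rlct=1$ but $\codim/2=1/2$. The bound holds when $F\ge 0$ (so that $F$ vanishes to order at least $2$ along its zero locus), which is the only case used later in the paper ($F=K^{\DLN}_B$). Your proof is more careful here than the paper's statement.
\item In (iv), your observation that $\zeta_G(s)>0$ for real $s>-\rlct(G)$ (as a convergent integral of a positive function) is exactly what prevents pole cancellation in the product, so the multiplicative case is indeed clean.
\item Your self-assessment of the additive Thom--Sebastiani step is accurate: the Mellin/Laplace--Tauberian route works but is the one place requiring genuine analytic care to get the pole \emph{order} right. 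The alternative you mention --- resolving $F$ and $G$ separately and analysing the resulting sum of monomials on the product --- is also viable and is closer to how such results are often proved in the singular learning theory literature.
\end{itemize}
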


\begin{example}\label{ex:rclts} \rm
The definition immediately implies, for any $n\in \N$
\[
\rlct(x^n)=\frac{1}{n}.
\]
Using Statement \ref{thom-seb} above, we deduce that, for $X=\R^d$ and $e\leq d$
\[
\rlct(x_1^{n_1}\ldots x_d^{n_e})= \min \frac{1}{n_i}
\]
and
\[
\rlct(x_1^{2n_1}+\ldots+x_e^{2n_e})=\frac{1}{2n_1}+\ldots+\frac{1}{2n_e}
\]
In particular, if $F=x_1^{2}+\ldots+x_e^{2}$ then
\[
\rlct(F)=\frac{e}{2}=\frac{\codim_X F^{-1}(0)}{2}
\]
saturates the inequality \eqref{eqn:rlct_upper_bound_glob}.
\end{example}

In singular learning theory, the rlct and the rlcm of a certain function $K$ control the asymptotic performance of Bayesian inference in a large class of statistical models as the size of the dataset increases. More precisely, $K$ arises as the \emph{relative entropy (or Kullback-Leibler divergence) between the true distribution and the model}, considered as a function of model parameters. 

In \cite{aoyagi}, Aoyagi computes the real log-canonical threshold of the relative entropy function of \emph{deep linear neural networks}. Deep linear networks are obtained from standard (feedforward, fully-connected) deep linear networks by replacing their non-linear activation functions by linear maps. Despite their simplicity, they are a useful ``toy model" in modern deep learning theory, as we discussed in Section \ref{sec:related}. By definition, the weights of a deep linear networks are a tuple of composable matrices, and the function computed by the network is their product, so the parameter space of the model is $\Rep_{\ud}$ where $\ud\in \N^{N+1}$ records the widths of the layers and $N$ is the depth of the network. We also need a ``true distribution" generating the data, or rather in this context a ``true function". Aoyagi makes the assumption that this true function is linear and given by some $B\in \Mat_{d_N,d_0}^{\rk=r}$ with $0\leq r\leq \min{\ud}$. This condition on the rank emsures that $B\in \mathrm{Im}(\mult)$; in statistical terminology, we are in the \emph{realisable} or \emph{well-specified} case.

We now define directly the function $K=K^{\DLN}_B$ for our deep linear model, referring to \cite{aoyagi} for a derivation of how it arises as a relative entropy. As it turns out, in this case, $K^{\DLN}_B$ is algebraic:
\begin{equation}
    K^{\DLN}_B(A_*):= \| \mult(A)-B \|^2_2 =\Tr((\mult(A)-B)^t (\mult(A)-B))) 
\end{equation}
We have $K^{\DLN}_B(A_*)\geq 0$ and 
\[(K^{\DLN}_B)^{-1}(0)=\mult^{-1}(B)\]
so that by Equation \ref{eqn:rlct_upper_bound} we find
\[
\rlct(K^{\DLN}_B)\leq \frac{\codim \mult^{-1}(B)}{2}.
\]

The main result of \cite{aoyagi} is then the computation of $\rlct(K^{\DLN}_B)$, which thanks to our results we can reformulate cleanly as

\begin{thm}\label{thm:aoyagi-rlct}
\[
\rlct(K^{\DLN}_B)= \frac{\codim \mult^{-1}(B)}{2}.
\]
and, with the notations of Section~\ref{sec:SolvingQIP},
\[
\rlcm(K^{\DLN}_B)=m^2\left\{\frac{\tilde{S}}{m} \right\}\left(1-\left\{\frac{\tilde{S}}{m} \right\}\right)
\]
\end{thm}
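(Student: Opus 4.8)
The upper bound $\rlct(K^{\DLN}_B)\le \codim\mult^{-1}(B)/2$ is immediate: $(K^{\DLN}_B)^{-1}(0)=\mult^{-1}(B)$ is a nonempty algebraic set and $K^{\DLN}_B$ is a polynomial on $\Rep_{\ud}$, so the infimum defining $\rlct(K^{\DLN}_B)$ is attained and inequality \eqref{eqn:rlct_upper_bound_glob} of Proposition~\ref{prop:rlct_elem} applies. The content of the theorem is thus the reverse inequality, together with the value of the multiplicity, and for these I would rely on the explicit resolution-of-singularities computation of Aoyagi \cite{aoyagi} and reconcile it with the formulas of Theorem~\ref{thm:main-codim}.

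The plan is the following. First, record Aoyagi's output: in \cite{aoyagi} the rlct and rlcm of $K^{\DLN}_B$ are given as piecewise-polynomial expressions in the layer widths $d_0,\dots,d_N$ and the true rank $r$, organized around an auxiliary index determined by a maximality condition, with a floor/fractional-part correction term. The first step is to check that, after passing to the weakly increasing reordering $\ud'$ and the shifted widths $d'_i-r$, Aoyagi's auxiliary index coincides with the integer $m$ of Definition~\ref{def:d1}; this is an elementary comparison of the two defining inequalities, using that the function $\ell\mapsto(\sum_{i\le\ell}d'_i)-\ell d'_\ell$ considered just before Definition~\ref{def:d1} is weakly decreasing.

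Second, with the indices matched, I would bring both sides of the identity $\rlct(K^{\DLN}_B)=\codim\mult^{-1}(B)/2$ into the same normal form. On our side $\codim\mult^{-1}(B)$ is given by \eqref{eqn:codim_fiber} in terms of $m$ and $\tilde{S}=S-(m+1)r=\sum_{i=0}^m(d'_i-r)$; expanding $\sum_{i<j}(d'_i-r)(d'_j-r)=\frac12\big(\tilde{S}^2-\sum_{i=0}^m(d'_i-r)^2\big)$ and rearranging the fractional-part terms exactly as in the proofs of Lemma~\ref{lem:delta} and Proposition~\ref{prop:Dhat_formula} puts it in a canonical shape, and the same manipulations turn Aoyagi's expression into the same shape --- which is precisely why the reformulation comes out clean. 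For the multiplicity, the key observation is that
\[
m^2\left\{\frac{\tilde{S}}{m}\right\}\left(1-\left\{\frac{\tilde{S}}{m}\right\}\right)=|\delta|\,(m-|\delta|),
\qquad \delta:=\tilde{S}-m\left\lfloor\frac{\tilde{S}}{m}+\frac12\right\rfloor,
\]
since $m\{\tilde{S}/m\}$ equals $|\delta|$ or $m-|\delta|$ according to the sign of $\delta$ by Lemma~\ref{lem:delta}; Aoyagi's rlcm should likewise collapse to $|\delta|(m-|\delta|)$, which also runs parallel to the count $\binom{m}{|\delta|}$ of top-dimensional components coming from \eqref{eqn:number_top_comp}.

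The main obstacle here is bookkeeping rather than conceptual. One must carefully translate Aoyagi's conventions --- the indexing of the layers, the way the rank $r$ enters, the normalization of the threshold --- into ours, and handle the case distinctions cleanly: both the dichotomy $m<N$ versus $m=N$ (whether the function before Definition~\ref{def:d1} turns negative) and the sign of $\delta$ appear on Aoyagi's side in a superficially different guise, and matching them is where the care is needed; once the dictionary is fixed, the remaining verification is a finite computation with floors and fractional parts. We do not know a proof of $\rlct(K^{\DLN}_B)\ge \codim\mult^{-1}(B)/2$ that bypasses Aoyagi's resolution argument; such a proof, presumably exploiting the $G$-equivariance of $\mult$ and the orbit stratification, would be of independent interest.
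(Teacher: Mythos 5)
Your proposal is correct and takes essentially the same approach as the paper: both proofs are a translation/comparison of \cite[Theorem 1]{aoyagi} with Formula \eqref{eqn:codim_fiber} of Theorem~\ref{thm:main-codim}, matching the auxiliary index to $m$ and simplifying the floor/ceiling corrections. The only cosmetic difference is that you route the arithmetic through Lemma~\ref{lem:delta} and the identity $m^2\left\{\tilde{S}/m\right\}\left(1-\left\{\tilde{S}/m\right\}\right)=|\delta|\,(m-|\delta|)$, whereas the paper uses the equivalent direct identity $(\lceil x\rceil-x)(1+x-\lceil x\rceil)=\{x\}(1-\{x\})$; both are fine.
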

\begin{proof}
    This follows from a comparison between Formula \eqref{eqn:codim_fiber} in Theorem \ref{thm:main-codim} and \cite[Theorem 1]{aoyagi}. To see this requires some translations between our notations and the ones in loc.cit (Note the shift by one and reverse ordering of the dimensions):
\[
\begin{array}{c|c|c}
    & \text{\cite{aoyagi}} &\text{This paper} \\
 \hline  \text{Dimension vector}  & (H^{(L+1)},\ldots,H^{(1)})  & (d_0,\ldots,d_N)  \\
   \text{Reduced dimension vector}  &  (M^{L+1},\ldots,M^{(1)}) & (d_0-r,\ldots,d_N-r) \\
   \text{Number of ``relevant" dimensions} & l & m \\
   \text{Set of ``relevant" reduced dimensions} & (M^{(S_1)},\ldots,M^{(S_{l+1})}) & (d'_0-r,\ldots,d'_m-r) \\
\end{array}
\]

In particular, the integers $M$ and $a$ defined in \cite[Theorem 1]{aoyagi} translate in our notation to
\[
M=\left\lceil \frac{\tilde{S}}{m}\right\rceil
\]
and
\[
a=\tilde{S}-m\left\lceil\frac{\tilde{S}}{m}\right\rceil+m
\]
so that
\[
\frac{a(l-a)}{l}=m\left (1+\frac{\tilde{S}}{m}-\left\lceil\frac{\tilde{S}}{m}\right\rceil\right)\left (-\frac{\tilde{S}}{m}+\left\lceil\frac{\tilde{S}}{m}\right\rceil\right)
\]
Now we make the following elementary observation about ceiling and floor functions: for all $x\in \R$, 
    \[
    (\left\lceil x\right\rceil-x)(1+x-\left\lceil x\right\rceil)=\left\{x\right\}(1-\left\{x\right\}).
    \]
and conclude that
\[
\frac{a(l-a)}{l}=m\left\{\frac{\tilde{S}}{m} \right\}\left(1-\left\{\frac{\tilde{S}}{m} \right\}\right).
\]
Using this, we now recognize that Formula \eqref{eqn:codim_fiber} in Theorem \ref{thm:main-codim} is exactly (twice) the first formula for $\lambda=\rlct(K^{\DLN}_B)$ in \cite[Theorem 1]{aoyagi}. There is also a formula for the multiplicity in \cite[Theorem 1]{aoyagi}, which we simply translate into our notations. 
\end{proof}

\begin{remark} \rm
As far as we now, there is no simple relationship between $\rlcm(K^{\DLN}_B)=m^2\left\{\frac{\tilde{S}}{m} \right\}\left(1-\left\{\frac{\tilde{S}}{m} \right\}\right)$ and the number $k=\binom{m}{\tilde{S}-m\left\lfloor \frac{\tilde{S}}{m}+\frac{1}{2}\right\rfloor}$ of irreducible components of $\mult^{-1}(B)$.
\end{remark}

We expect that the formula of Theorem~\label{thm:aoyagi-rlct} for global rlcts also extends to \emph{local} rlcts and codimensions, i.e., that for every $A_*\in \mult^{-1}(B)$ we have $\rlct_{A_*}(K^{\DLN}_B)=\frac{\codim_{A_*} \mult^{-1}(B)}{2}.$ We plan to come back to this question in future work.

\appendix

\section{Another algebraic characterization of $C$ and $\theta$.} \label{sec:app_theta}

We showed three efficient ways to calculate the dimension of top-dimensional components of $\Sigma^r_{\ud}$: Theorems \ref{thm:Pseries}, \ref{thm:QIP}, and \ref{thm:main-codim}. In this section---as an added bonus---we give another characterization (of both $C$ and $\theta$) that is less calculationally efficient than the other three. However, it suggests some relations with the so-called interpolation method for characteristic classes of singularities.

For a dimension vector $\ud$ and an integer $r$, consider the degree 1 variables $y_1,\ldots,y_{r+1}$ and $x_{ij}$ for $0\leq i \leq N, 1\leq j \leq d_i$. 
Define 
\[\Rr_{\ud}=\Z[x_{ij}: 0\leq i \leq N, 1\leq j \leq d_i],
\qquad\text{ and }\qquad 
\Hh_{\ud}= \Rr_{\ud}^{S_{\ud}}
\]
where $S_{\ud}=S_{d_0}\times S_{d_1}\times \ldots \times S_{d_N}$, and the symmetric group $S_{d_i}$ permutes the variables $x_{i1},\ldots,x_{id_i}$.  
The map $\psi_r$ in the diagram
\[
\begin{tikzcd}
\Rr_{\ud} \arrow[r, "\psi_r"] &
\Z[y_1,\ldots,y_{r+1}, x_{ij}: 0\leq i \leq N, r+1< j \leq d_i] \\
\Hh_{\ud} \arrow[u, hook] \arrow[ru, "\phi_r"] & 
\end{tikzcd}
\]
is defined by 
\[
x_{ij}\mapsto 
\begin{cases}
    y_j & \text{ if } j\leq r+1\\
    x_{ij} & \text{ if } j>r+1.
\end{cases}
\]
The map $\phi_r$ is the restriction of $\psi_r$ to $\Hh_{\ud}$. Let $\CA^r_{\ud}$ be the kernel of $\phi_r$---a homogeneous ideal.

As before, the number and codimension of the top-dimensional components of $\Sigma_{\ud}^r$ are denoted by $\theta$ and $C$.

\begin{thm}\label{thm:UsingAvoidingIdeal}
    The degree and rank of the lowest degree part of $\CA^r_{\ud}$ are $C$ and $\theta$.  
\end{thm}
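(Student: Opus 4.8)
The plan is to recognize $\CA^r_\ud$ as an ideal in equivariant cohomology whose Poincar\'e series is essentially already known, namely the ideal of classes ``universally supported'' on $\cSigma^r_\ud$ from the remark after Theorem~\ref{thm:Pseries}, and then to read off its lowest-degree part. Throughout I assume $0\le r<\min\ud$ (if $r=\min\ud$ then $\cSigma^r_\ud=\Rep_\ud$ and the statement degenerates).

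\emph{Step 1: put $\CA^r_\ud$ in equivariant cohomology.} Identify $x_{ij}$ with the $j$-th Chern root of the $i$-th factor of $G=\prod_{i=0}^{N}\GL_{d_i}(\C)$, so that $\Hh_\ud=H^*_G(\pt)$ (our degree-$1$ variables lying in $H^2$). Let $\um^\star$ be the Kostant partition of $\ud$ with $m^\star_{0N}=r+1$, $m^\star_{ii}=d_i-r-1$ for all $i$, and all other entries $0$ (a genuine Kostant partition since $r+1\le\min\ud$). Its rank pattern has every off-diagonal entry equal to $r+1$, and since any orbit rank pattern satisfies $r_{ij}\ge r_{0N}=m_{0N}$, the orbit $\CO_{\um^\star}$ is the unique \emph{minimal} orbit (for the closure order of Theorem~\ref{thm:closures}) not contained in $\cSigma^r_\ud$. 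By \cite[Prop.~3.6]{FRduke} the stabilizer of a point of $\CO_{\um^\star}$ is homotopy equivalent to $G_{\um^\star}=\GL_{r+1}\times\prod_{i=0}^N\GL_{d_i-r-1}$, included in $G$ by $(Q,R_0,\dots,R_N)\mapsto(\operatorname{diag}(Q,R_i))_i$. Comparing Chern roots, the corresponding restriction $H^*_G(\pt)\to H^*_{G_{\um^\star}}(\pt)$ sends $x_{ij}$ to $y_j$ for $j\le r+1$ and to $x_{ij}$ for $j>r+1$ --- i.e.\ it is exactly $\psi_r$ --- and $\phi_r(\Hh_\ud)$ automatically lies in the invariant subring $H^*_{G_{\um^\star}}(\pt)\subset\Z[y_1,\dots,y_{r+1},x_{ij}:j>r+1]$. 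Hence $\CA^r_\ud=\ker\!\big(H^*_G(\pt)\to H^*_{G_{\um^\star}}(\pt)\big)$, the kernel of restriction to $\CO_{\um^\star}$.

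\emph{Step 2: $\CA^r_\ud$ is the ideal universally supported on $\cSigma^r_\ud$.} Set $\mathcal I:=\{\alpha\in H^*_G(\pt):\alpha|_{\CO_\um}=0\text{ for every }\um\text{ with }m_{0N}\ge r+1\}$, where $\alpha|_{\CO_\um}$ is the image under $H^*_G(\Rep_\ud)\to H^*_G(\CO_\um)=H^*(BG_\um)$. Since $m^\star_{0N}=r+1$, Step~1 gives $\mathcal I\subseteq\CA^r_\ud$ at once. Conversely, fix $\um$ with $m_{0N}\ge r+1$ and write $z^{(kl)}_1,\dots,z^{(kl)}_{m_{kl}}$ for the Chern roots of the factor $\GL_{m_{kl}}$ of $G_\um$. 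The restriction $\alpha|_{\CO_\um}$ is obtained from the symmetric polynomial $\alpha$ by substituting, for the $i$-th block $\{x_{i1},\dots,x_{id_i}\}$, the multiset $\bigsqcup_{k\le i\le l}\{z^{(kl)}_1,\dots,z^{(kl)}_{m_{kl}}\}$ (symmetry of $\alpha$ makes this independent of the chosen matching). The sub-block $\{z^{(0N)}_1,\dots,z^{(0N)}_{m_{0N}}\}$ occurs in \emph{every} block and has size $\ge r+1$, so the matchings can be chosen so that $x_{i1},\dots,x_{i,r+1}$ are all sent to the common values $z^{(0N)}_1,\dots,z^{(0N)}_{r+1}$; that substitution therefore factors as $\tau\circ\psi_r$ for a suitable ring homomorphism $\tau$. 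Thus $\phi_r(\alpha)=0$ forces $\alpha|_{\CO_\um}=0$, giving $\CA^r_\ud\subseteq\mathcal I$, hence $\CA^r_\ud=\mathcal I$.

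\emph{Step 3: extract $C$ and $\theta$.} Invoke $P(\mathcal I)=\sum_{s=0}^{r}Q^s_\ud=\sum_{\um\in\CM^{\le r}_\ud}q^{\codim(\CO_\um)}\CP_\um$ --- the identity recorded after Theorem~\ref{thm:Pseries} (it holds because the long exact sequence of $(\Rep_\ud,\Rep_\ud\setminus\cSigma^r_\ud)$ in $G$-equivariant cohomology degenerates, all relevant groups having no odd cohomology, so $\mathcal I\cong H^*_{G,\cSigma^r_\ud}(\Rep_\ud)$, whose Poincar\'e series is read off the orbit stratification of $\cSigma^r_\ud$ via Thom isomorphisms, exactly as in Section~\ref{sec:proof_5gon}). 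As $\CP_\um=1+(\text{higher order})$, the lowest-degree term of $P(\mathcal I)$ has degree $\min\{\codim(\CO_\um):\um\in\CM^{\le r}_\ud\}=\codim_{\Rep_\ud}\cSigma^r_\ud=C$ and coefficient $\#\{\um\in\CM^{\le r}_\ud:\codim(\CO_\um)=C\}$. Finally, any orbit of $\cSigma^r_\ud$ of minimal codimension $C$ cannot lie in the closure of a larger orbit of $\cSigma^r_\ud$, so its closure is a top-dimensional irreducible component, and conversely every top-dimensional component of $\cSigma^r_\ud$ (equivalently of $\Sigma^r_\ud$, by Corollary~\ref{cor:irred_comp}) is the closure of a unique such orbit; so the coefficient equals $\theta$, which finishes the proof.

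\emph{Expected main obstacle.} The delicate point is the equality $\CA^r_\ud=\mathcal I$ in Step~2: a priori $\CA^r_\ud$ is the kernel of restriction to the \emph{single} minimal orbit $\CO_{\um^\star}$, whereas membership in $\mathcal I$ demands vanishing on \emph{all} orbits outside $\cSigma^r_\ud$, including ``larger'' ones for which there is no inclusion $G_\um\hookrightarrow G_{\um^\star}$ and hence no obvious factorization of the restriction map. What rescues the argument is the Chern-root description --- an orbit with $m_{0N}\ge r+1$ carries at least $r+1$ Chern roots common to all $N+1$ columns, which is precisely the coincidence locus contracted by $\psi_r$ --- together with the $\prod_i S_{d_i}$-symmetry of classes in $\Hh_\ud$. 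The only remaining ingredient, the Poincar\'e-series identity $P(\mathcal I)=\sum_{s\le r}Q^s_\ud$, is routine once one appeals to the degeneration already used in Section~\ref{sec:proof_5gon}.
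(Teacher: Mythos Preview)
Your proof is correct and follows the same overall strategy as the paper's sketch: identify $\CA^r_{\ud}$ with the avoiding ideal (classes supported on $\cSigma^r_{\ud}$) and then read off its lowest-degree part. The paper defers both halves of this to the references \cite{FRavoiding,FRduke}; you instead supply explicit arguments, which is a genuine improvement in self-containedness.

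Two differences worth noting. First, your Steps~1--2 give a concrete reason why the single-orbit kernel $\ker(\phi_r)=\ker\big(H^*_G(\pt)\to H^*_{G_{\um^\star}}(\pt)\big)$ coincides with the full avoiding ideal: the Chern-root factorization $\alpha|_{\CO_{\um}}=\tau\circ\psi_r(\alpha)$ for any $\um$ with $m_{0N}\ge r+1$. This is the substantive point the paper hides behind the reference to \cite{FRduke}, and your argument is clean. Second, for the extraction of $C$ and $\theta$ the paper invokes the fact that equivariant fundamental classes of the top-dimensional components form a basis of the lowest-degree part, whereas you compute the Poincar\'e series $P(\mathcal I)=\sum_{s\le r}Q^s_{\ud}$ (the identity already flagged in the remark after Lemma~\ref{lem:Qs_codim}) and read off the leading term. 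Both routes are equivalent and both appear in the paper in some form.

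One small point in Step~3: your parenthetical justification shows that $\ker\big(H^*_G(\Rep_{\ud})\to H^*_G(U)\big)\cong H^*_{G,\cSigma^r_{\ud}}(\Rep_{\ud})$ via the degenerating long exact sequence, but to conclude that this kernel equals your $\mathcal I$ (vanishing on every orbit of $U$) you implicitly use that $H^*_G(U)\hookrightarrow\prod_{\um}H^*_G(\CO_{\um})$. This injectivity is standard in the present equivariantly formal setting (it follows from the same orbit-filtration spectral sequence applied to $U$, or alternatively from your Step~2 plus the obvious inclusion $\ker(\text{res to }U)\subseteq\mathcal I$ together with a Poincar\'e-series count), but you may want to say one more word about it.
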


\begin{proof}
Here we just sketch the proof. The elements of $H_G^*(\Rep_{\ud})(\cong H_{\ud})$ that are supported on $\Sigma_{\ud}^r$ form an ideal, the so-called {\em avoiding ideal}. 
On the one hand, this ideal can be calculated as a kernel of the restriction map to $\Rep_{\ud}-\Sigma_{\ud}^r$ (namely, the map $\phi_r$). On the other hand, the fundamental classes of the largest dimensional components of $\Sigma_{\ud}^r$ form a basis for the lowest-degree part of $\CA_{\ud}^r$, and the theorem follows. Details on avoiding ideals in general are in \cite{FRavoiding}, and details on restriction maps for $\Rep_{\ud}$ are in \cite{FRduke}.
\end{proof}

\begin{example}\rm (Cf. Examples \ref{ex:222324}, \ref{ex:q-series}, \ref{ex:223withQIP}.)
For $\ud=(2,2,3)$ let us use the following notation: let $a_1,a_2$; $b_1,b_2$; $c_1,c_2,c_3$ be the elementary symmetric polynomials of $x_{11},x_{12}$; $x_{21},x_{22}$; $x_{31},x_{32},x_{33}$, respectively. Hence $\deg(a_i)=\deg(b_i)=deg(c_i)=i$. By definition, the ideal $\CA^0_{\ud}$ is the kernel of the ring homomorphism
\[
\begin{array}{rcl}
    \phi_0:\Z[a_1,a_2,b_1,b_2,c_1,c_2,c_3] & \to & \Z[y,x_{12},x_{22},x_{32}, x_{33}] \\
    x_{11}, x_{21}, x_{31} & \mapsto & y.
\end{array}
\] 
Calculation shows that the lowest degree part of this ideal is of degree 4, and of rank~2. In fact, it is spanned by the {\em two} ($\Rightarrow \theta=2$) degree {\em four} ($\Rightarrow C=4$) polynomials
\[
    a_1b_1b_2 - a_1b_2c_1 - a_2b_1^2 + a_2b_1c_1 + a_1c_3 + a_2b_2 - a_2c_2 - b_1c_3 - b_2^2 + b_2c_2,\text{   and}
\]
\begin{multline*}
    \hskip .8 true cm a_1^2b_2 - a_1a_2b_1 - a_1b_1b_2 + a_2b_1^2 + a_2^2 - 2a_2b_2 + b_2^2 \\
    =(x_{21}-x_{11})(x_{22}-x_{11})(x_{21}-x_{12})(x_{22}-x_{12}),
\end{multline*}
that are the fundamental classes of the orbit closures corresponding to the lace diagrams
\begin{equation*} 
    \begin{tikzpicture}[baseline=-5]
 \node at (0,0)  {$\bullet$};
 \node at (0,-.2)  {$\bullet$};
 \node at (1,0)  {$\bullet$};
 \node at (1,-.2)  {$\bullet$};
 \node at (2,0)  {$\bullet$};
 \node at (2,-.2)  {$\bullet$};
 \node at (2,-.4)  {$\bullet$};
  \draw (1,0) -- (2,0);
  \draw (0,-.2)--(1,-.2);
\end{tikzpicture}
\qquad \text{ and }\qquad
\begin{tikzpicture}[baseline=-5]
 \node at (0,0)  {$\bullet$};
 \node at (0,-.2)  {$\bullet$};
 \node at (1,0)  {$\bullet$};
 \node at (1,-.2)  {$\bullet$};
 \node at (2,0)  {$\bullet$};
 \node at (2,-.2)  {$\bullet$};
 \node at (2,-.4)  {$\bullet$};
  \draw (1,0) -- (2,0);
  \draw (1,-.2)--(2,-.2);
\end{tikzpicture}.
\end{equation*}
\end{example}

Clearly, Theorem \ref{thm:UsingAvoidingIdeal} is less computationally efficient than our earlier theorems calculating $C$ and $\theta$. Nevertheless, it (just like Theorem \ref{thm:Pseries}) also displays the a priori non-obvious fact that $C$ and $\theta$ are invariant under the permutation of the components of $\ud$ (cf.~Corollary~\ref{cor:PermutationInvariance}).

\bibliographystyle{alpha}
\bibliography{references}

\newcommand{\etalchar}[1]{$^{#1}$}
\begin{thebibliography}{WHvW{\etalchar{+}}24}

\bibitem[AAR99]{special}
G.~E. Andrews, R.~Askey, and R.~Roy.
\newblock {\em Special Functions}.
\newblock Number~71 in Enc. of Math. and Appl. CUP, 1999.

\bibitem[AB83]{AB83}
M.~Atiyah and R.~Bott.
\newblock The {Y}ang-{M}ills equation over {R}iemann surfaces.
\newblock {\em Phil. Trans. of the Royal Soc. London}, 308(1505):523--615,
  1983.

\bibitem[ACGH19]{arora-al:convergence_GD_DLN}
Sanjeev Arora, Nadav Cohen, Noah Golowich, and Wei Hu.
\newblock A convergence analysis of gradient descent for deep linear neural
  networks, 2019.

\bibitem[AD80]{abeasis-del_fra:equioriented}
S.~Abeasis and A.~{Del Fra}.
\newblock Degenerations for the representations of an equioriented quiver of
  type {{\(A_m\)}}.
\newblock {\em Boll. Unione Mat. Ital., Suppl.}, 2:157--171, 1980.

\bibitem[AD84]{ABEASIS198481}
S.~Abeasis and A.~{Del Fra}.
\newblock Degenerations for the representations of an equioriented quiver of
  type {$D^m$}.
\newblock {\em Adv. Math.}, 52(2):81--172, 1984.

\bibitem[AD85]{abeasis-del_fra}
S.~Abeasis and A.~{Del Fra}.
\newblock Degenerations for the representations of a quiver of type
  {{\({\mathcal A}_ m\)}}.
\newblock {\em Journal of Algebra}, 93:376--412, 1985.

\bibitem[ADK81]{abeasis-del_fra-kraft}
S.~Abeasis, A.~{Del Fra}, and H.~Kraft.
\newblock The geometry of representations of {{\(A_m\)}}.
\newblock {\em Math. Ann.}, 256:401--418, 1981.

\bibitem[AMG24]{achour-al:DLN_landscape_Hessian}
E.~Mehdi Achour, F.~Malgouyres, and S.~Gerchinovitz.
\newblock The loss landscape of deep linear neural networks: a second-order
  analysis, 2024.

\bibitem[Aoy24]{aoyagi}
M.~Aoyagi.
\newblock Consideration on the learning efficiency of multiple-layered neural
  networks with linear units.
\newblock {\em Neural Networks}, 172(106132):1--11, 2024.

\bibitem[Ati70]{atiyah}
M.~Atiyah.
\newblock Resolution of singularities and division of distributions.
\newblock {\em Communications on pure and applied mathematics}, 23(2):145--150,
  1970.

\bibitem[BFR05]{BFRpositivity}
A.~S. Buch, L.~Feh\'er, and R.~Rim\'anyi.
\newblock Positivity of quiver coefficients through {T}hom polynomials.
\newblock {\em Adv. Math.}, 197:306--320, 2005.

\bibitem[CLM{\etalchar{+}}23]{chen2023dynamicalversusbayesianphase}
Z.~Chen, E.~Lau, J.~Mendel, S.~Wei, and D.~Murfet.
\newblock Dynamical versus bayesian phase transitions in a toy model of
  superposition, 2023.

\bibitem[CS81]{concini-strickland:complexes}
C.~De Concini and E.~Strickland.
\newblock On the variety of complexes.
\newblock {\em Advances in Mathematics}, 41(1):57--77, 1981.

\bibitem[CS88]{ConwaySloane}
J.~Conway and N.~Sloane.
\newblock {\em Sphere Packings, Lattices and Groups}, volume 290 of {\em GL}.
\newblock Springer, 1988.

\bibitem[DAP{\etalchar{+}}24]{saxe-al:exact-dynamics-DLN}
C.~C.~J. Dominé, N.~Anguita, A.~M. Proca, L.~Braun, D.~Kunin, P.~A.~M.
  Mediano, and A.~M. Saxe.
\newblock From lazy to rich: Exact learning dynamics in deep linear networks,
  2024.

\bibitem[FR02]{FRduke}
L.~Feh\'er and R.~Rim\'anyi.
\newblock Classes of degeneracy loci for quivers---the {T}hom polynomial point
  of view.
\newblock {\em Duke Math. J.}, 114(2):193--213, 2002.

\bibitem[FR04]{FRavoiding}
L.~M. Feh\'er and R.~Rim\'anyi.
\newblock Calculation of {T}hom polynomials and other cohomological
  obstructions for group actions.
\newblock In T.~Gaffney and M.~Ruas, editors, {\em Real and Complex
  Singularities (Sao Carlos, 2002)}, number 354 in Contemp. Math., pages
  69--93. AMS, 2004.

\bibitem[HWFR{\etalchar{+}}24]{hoogland2024developmentallandscapeincontextlearning}
J.~Hoogland, G.~Wang, M.~Farrugia-Roberts, L.~Carroll, S.~Wei, and D.~Murfet.
\newblock The developmental landscape of in-context learning, 2024.

\bibitem[JGc{\etalchar{+}}22]{jacot-al:saddle-to-saddle-DLN}
A.~Jacot, F.~Ged, B.~\c{S}im\c{s}ek, C.~Hongler, and F.~Gabriel.
\newblock Saddle-to-saddle dynamics in deep linear networks: Small
  initialization training, symmetry, and sparsity, 2022.

\bibitem[JT19]{ji-telgarsky:GD-DLN}
Z.~Ji and M.~Telgarsky.
\newblock Gradient descent aligns the layers of deep linear networks, 2019.

\bibitem[Kaw16]{kawaguchi:DLN_no_local_min}
K.~Kawaguchi.
\newblock Deep learning without poor local minima.
\newblock {\em Advances in neural information processing systems}, 29, 2016.

\bibitem[Kaz97]{Kaz97}
M.~E. Kazarian.
\newblock Characteristic classes of singularity theory.
\newblock In {\em The Arnold-Gelfand mathematical seminars}, pages 325--340.
  Birkhauser, 1997.

\bibitem[Kir16]{kirillov}
A.~Kirillov.
\newblock {\em Quiver representations and quiver varieties}.
\newblock Number 174 in GSM. AMS, 2016.

\bibitem[KMS06]{knutson-miller-shimozono}
A.~Knutson, E.~Miller, and M.~Shimozono.
\newblock Four positive formulae for type a quiver polynomials.
\newblock {\em Inv. Math.}, 166(2):229--325, 2006.

\bibitem[Kol97]{kollar:lct-survey}
J.~Koll{\'a}r.
\newblock Singularities of pairs.
\newblock In {\em Proceedings of Symposia in Pure Mathematics}, volume~62,
  pages 221--288. American Mathematical Society, 1997.

\bibitem[KR15]{kinser:quiver-schubert}
R.~Kinser and J.~Rajschot.
\newblock Type a quiver loci and schubert varieties.
\newblock {\em Journal of Commutative Algebra}, 7(2):265--301, 2015.

\bibitem[KR24]{KR}
J.~Koncki and R.~Rim\'anyi.
\newblock The main reasons for matrices multiplying to zero.
\newblock In preparation, 2024.

\bibitem[Leh24]{spl:real-jets}
S.~P. Lehalleur.
\newblock Real jet schemes, real contact loci and the real log-canonical
  threshold.
\newblock to appear, 2024.

\bibitem[LFW{\etalchar{+}}24]{lau2024locallearningcoefficientsingularityaware}
E.~Lau, Z.~Furman, G.~Wang, D.~Murfet, and S.~Wei.
\newblock The local learning coefficient: A singularity-aware complexity
  measure, 2024.

\bibitem[Lin11]{lin:phd_thesis}
Sh. Lin.
\newblock {\em Algebraic methods for evaluating integrals in Bayesian
  statistics}.
\newblock University of California, Berkeley, 2011.

\bibitem[LK17]{kawaguchi-lu:DLN_no_local_min_II}
H.~Lu and K.~Kawaguchi.
\newblock Depth creates no bad local minima, 2017.

\bibitem[LM98]{lakshmibai-magyar:quiver-schubert}
V.~Lakshmibai and Peter Magyar.
\newblock Degeneracy schemes, quiver schemes, and schubert varieties.
\newblock {\em International Mathematics Research Notices}, 1998(12):627--640,
  01 1998.

\bibitem[ML24]{marion-chizat:DLN-flat-minima}
P.~Marion and Ch. Lénaïc.
\newblock Deep linear networks for regression are implicitly regularized
  towards flat minima, 2024.

\bibitem[MS83]{musili-seshadri:schubert-complexes}
Ch. Musili and C.~S. Seshadri.
\newblock Schubert varieties and the variety of complexes.
\newblock In {\em Arithmetic and Geometry: Papers Dedicated to IR Shafarevich
  on the Occasion of His Sixtieth Birthday. Volume II: Geometry}, pages
  329--359. Springer, 1983.

\bibitem[Mus11]{mustata:lct-survey}
M.~Mustata.
\newblock Impanga lecture notes on log canonical thresholds, 2011.

\bibitem[Rei10]{Reineke}
M.~Reineke.
\newblock Poisson automorphisms and quiver moduli.
\newblock {\em Journal of the Institute of Mathematics of Jussieu},
  9(3):653–667, 2010.

\bibitem[Rim13]{RRcoha}
R.~Rimanyi.
\newblock On the cohomological hall algebra of dynkin quivers, 2013.

\bibitem[RWY18]{RWY}
R.~Rim\'anyi, A.~Weigandt, and A.~Yong.
\newblock Partition identities and quiver representations.
\newblock {\em J. of Alg. Comb.}, 47:129--169, 2018.

\bibitem[SB24]{bhattacharya-shewchuk}
J.~R. Shewchuk and S.~Bhattacharya.
\newblock The geometry of the set of equivalent linear neural networks, 2024.
\newblock arXiv:2404.14855.

\bibitem[SMG14]{saxe-mcclellang-ganguli:DLN}
A.~M. Saxe, J.~L. McClelland, and S.~Ganguli.
\newblock Exact solutions to the nonlinear dynamics of learning in deep linear
  neural networks, 2014.

\bibitem[TKB20]{trager-kohn-bruna:DLN-spurious-critical}
M.~Trager, K.~Kohn, and J.~Bruna.
\newblock Pure and spurious critical points: a geometric study of linear
  networks, 2020.

\bibitem[Voi77]{voight}
D.~Voigt.
\newblock {\em Endliche algebraische Gruppen}.
\newblock Number 592 in Lecture Notes in Math. Springer, 1977.

\bibitem[Wat09]{watanabe2009}
S.~Watanabe.
\newblock {\em Algebraic Geometry and Statistical Learning Theory}.
\newblock CUP, 2009.

\bibitem[Wat18]{watanabe2018}
S.~Watanabe.
\newblock {\em Mathematical theory of {B}ayesian statistics}.
\newblock Chapman\&{}Hall, 2018.

\bibitem[Wat24]{watanabe2024recent}
Sumio Watanabe.
\newblock Recent advances in algebraic geometry and bayesian statistics.
\newblock {\em Information Geometry}, 7(Suppl 1):187--209, 2024.

\bibitem[WHvW{\etalchar{+}}24]{wang2024differentiationspecializationattentionheads}
G.~Wang, J.~Hoogland, S.~van Wingerden, Z.~Furman, and D.~Murfet.
\newblock Differentiation and specialization of attention heads via the refined
  local learning coefficient, 2024.

\bibitem[Zel85]{zelevinskii:schubert}
A.~V. Zelevinskii.
\newblock Two remarks on graded nilpotent classes.
\newblock {\em Russian Mathematical Surveys}, 40(1):249, 1985.

\bibitem[ZLM22]{Liu-Li-Meng:exact_DLN}
L.~Ziyin, B.~Li, and X.~Meng.
\newblock Exact solutions of a deep linear network.
\newblock arXiv, 2022.

\end{thebibliography}

\end{document}